\let\oldcite\cite
\renewcommand{\cite}[1]{\oldcite{#1}}
\newcommand{\R}{\mathbb{R}}
\newcommand{\E}{\mathbb{E}}
\newcommand{\Q}{\mathbb{Q}}
\newcommand{\D}{\mathrm{d}}
\newcommand{\mL}{\mathcal{L}}
\newtheorem{assumption}{Assumption}[]
\newtheorem{lemma}{Lemma}[]
\newtheorem{corollary}{Corollary}[]
\newtheorem{theorem}{Theorem}[]
\theoremstyle{definition}
\newtheorem{remark}{Remark}[]
\newtheorem{definition}{Definition}[]
\numberwithin{equation}{section}
\numberwithin{theorem}{section}
\numberwithin{lemma}{section}
\numberwithin{assumption}{section}
\numberwithin{corollary}{section}
\numberwithin{remark}{section}
\numberwithin{definition}{section}
\title{Fractional Backward Stochastic Partial Differential Equations with Applications to Stochastic Optimal Control of \\Partially Observed Systems driven by Lévy Processes}
\author{
Yuyang Ye\thanks{Department of Finance and Control Sciences, School of Mathematical Sciences, Fudan University, Shanghai 200433, China. {\small\it E-mail:} {\small\tt 22110180052@m.fudan.edu.cn}.}
\and
Yunzhang Li\thanks{Corresponding Author. Research Institute of Intelligent Complex Systems, Fudan University, Shanghai 200433, China. {\small\it E-mail:} {\small\tt li\_yunzhang@fudan.edu.cn}. Research supported by the National Natural Science Foundation of China (No.~12301566), and by the Science and Technology Commission of Shanghai Municipality (No.~23JC1400300), and by the Chenguang Program of Shanghai Education Development Foundation and Shanghai Municipal Education Commission (No.~22CGA01).}
\and
Shanjian Tang\thanks{Department of Finance and Control Sciences, School of Mathematical Sciences, Fudan University, Shanghai 200433, China. {\small\it E-mail:} {\small\tt sjtang@fudan.edu.cn}. Research supported by the National Key R\&D Program of China (Grant No. 2018YFA0703900) and the National Science Foundation of China (Grant No. 11631004).}
}
\date{}
\begin{document}

\maketitle
\begin{abstract}
In this paper, we study the Cauchy problem for backward stochastic partial differential equations (BSPDEs) involving fractional Laplacian operator. Firstly, by employing the martingale representation theorem and the fractional heat kernel, we construct an explicit form of the solution for fractional BSPDEs with space invariant coefficients, thereby demonstrating the existence and uniqueness of strong solution. Then utilizing the freezing coefficients method as well as the continuation method, we establish Hölder estimates and well-posedness for general fractional BSPDEs with coefficients dependent on space-time variables. As an application, we use the fractional adjoint BSPDEs to investigate stochastic optimal control of the partially observed systems driven by $\alpha$-stable Lévy processes.

\bigskip

\noindent{\bf AMS subject classification}: 35R60, 35S10, 60H15, 93E20

\medskip

\noindent{\bf Key words}: Backward stochastic partial differential equation, fractional Laplacian, Hölder estimation, partially observed optimal control, $\alpha$-stable Lévy process
\end{abstract}

\section{Introduction}
In this paper, we consider the Cauchy problem for a class of nonlocal backward stochastic partial differential equations (BSPDEs for short):
\begin{align}\label{linear-case 2}
\left\{
\begin{aligned}
  -\D u(t,x)=&\Big[-a(-\Delta)^{\frac{\alpha}{2}}u+b{ u_x}+cu+f +\sigma v\Big](t,x) \D t-v(t,x)\D W_t,\quad  (t,x)\in [0,T)\times \mathbb{R},\\
    u(T,x)=&\,\,g(x),\qquad\qquad\qquad\qquad\qquad\qquad\qquad\quad\qquad\qquad\qquad\qquad\qquad x\in\R,  
\end{aligned}
\right.
\end{align}
where $-(-\Delta)^{\frac{\alpha}{2}}$ is the fractional Laplacian operator with order $\alpha\in(1,2]$. The standard one-dimensional Brownian motion $W=\{W_t:t\in[0, T]\}$ is defined on a completed probability space $(\Omega, \mathbb{P},\mathcal{F})$. Let $\mathbb{F}:=\{\mathcal{F}_t\}_{t\geq 0}$ be the natural filtration generated by $W$, augmented by all the $\mathbb{P}$-null sets in $\mathcal{F}$. The leading coefficient $a:[0, T]\times \R\rightarrow \R^+$ is a deterministic positive function of time-space variable $(t,x)$, and the coefficient $(b,c,f,\sigma):\Omega \times [0, T]\times \R \rightarrow \R^4$ as well as the terminal condition $g:\Omega \times \R \rightarrow \R$ are real-valued random functions. Our paper aims to study the solution $(u,v)$ to fractional BSPDE~\eqref{linear-case 2} in suitable Hölder space and apply the regularity results to investigate stochastic optimal control of the partially observed systems driven by $\alpha$-stable Lévy processes.

BSPDEs, an infinite dimensional extension of backward stochastic differential equations (BSDEs), play a crucial role in various applications in probability theory and stochastic optimal control theory. For example, in the context of partially observed optimal stochastic control problems, a linear BSPDE emerges as the adjoint equation of the Zakai equation, which is pivotal for establishing the maximum principle (see e.g. Zhou~\cite{Zhou1993}).
Furthermore, solving forward-backward stochastic differential equations (FBSDEs) with random coefficients is intimately linked to solving quasi-linear BSPDEs, which serve as the foundation for the stochastic Feynman–Kac formula (see e.g. Ma and Yong~\cite{Ma1997}). Another significant application is that the stochastic Hamilton-Jacobi-Bellman (HJB) equation is a kind of nonlinear BSPDE, which is derived from the dynamic programming principle for controlled non-Markovian SDEs (see e.g. Peng~\cite{Peng1992}).

The theory of BSPDEs with regular Laplacian operators in Sobolev spaces has been well-developed, as documented in, for example, \cite{Ma1997, Ma1999, Hu2002, Dukai2012}. Compared to Sobolev solutions, there are relatively few studies on the Hölder solutions to BSPDEs. The first Hölder regularity result was established by Tang and Wei~\cite{tang2016cauchy}, but only for the BSPDEs involving regular Laplacian, i.e., $\alpha = 2$. To our knowledge, our paper is the first investigation into BSPDEs involving fractional Laplacian, i.e., $\alpha\in(1,2]$. Despite the nonlocal nature of fractional operator $-(-\Delta)^{\frac{\alpha}{2}}$, we consistently achieve the robust local Hölder regularities and thoroughly corroborate the results for $\alpha=2$ as detailed in~\cite{tang2016cauchy}.

In last decades, there has been increasing attentions in stochastic partial differential equations (SPDEs) that incorporate the fractional Laplacian. Kim and Kim~\cite{Kim2012} studied the $L_p$ regularity for SPDEs with fractional Laplacians. Subsequently, the $L_p$ theory for fractional SPDEs has been explored by several papers (see e.g., \cite{Kim2013am, xie2014fractal, gyongy2021, Choi2024}). On the other hand, various works have investigated solutions in Hölder spaces. Debbi and Dozzi~\cite{debbi2005solutions} examined the existence, uniqueness, and regularity of trajectories of mild solutions for a class of nonlinear fractional SPDEs in Hölder spaces. Mikulevicius and Pragarauskas~\cite{Mikulevicius2009} discussed the Hölder solutions for the integral-differential Zakai equation, and recent studies about the Hölder regularity for SPDEs can be found in, for example, \cite{Niu2010, Abouagwa2019, Tian2019}. However, the BSPDE~\eqref{linear-case 2} that we concern in this paper differs significantly from the forward SPDE, since the backward SPDE introduces an additional unknown variable $v$, which generally has a lower regularity and appears as the integrand of the It\^o's integral. Therefore, working in an appropriate Hölder space is essential for our study. Motivated by the setting in~\cite{Mikulevicius2009} and~\cite{tang2016cauchy}, we regard the solution $(u,v)$ as a functional on $\R$ with values in $\mathcal{S}_{\mathbb{F}}^2(0,T) \times \mathcal{L}_{\mathbb{F}}^2(0,T)$ and consider the Hölder regularity of $ x\mapsto(u(\cdot,x),v(\cdot,x))$.

Due to the nonlocal nature of fractional operator $-(-\Delta)^{\frac{\alpha}{2}}$, most classical local analysis methods fail in this context. To overcome this inherent challenge, we first employ the fractional heat kernel function and the martingale representation theorem, so that we can construct an explicit form of solution for fractional BSPDEs when the coefficient $(a,\sigma)$ does not depend on spatial variable $x$ and $(b,c)\equiv 0$. This establishes the existence and uniqueness of solutions, and allows us to study their regularity properties. Furthermore, to investigate cases where the equation coefficients depend on spatial variable $x$, we leverage the method of freezing coefficients to successfully obtain the a priori Hölder estimates for the solutions. Based on the estimates, using continuation method, we ultimately establish well-posedness for general fractional BSPDEs. It is noteworthy that our paper focuses on solutions in Hölder spaces, thus our results are not included in the classical framework based on the Gelfand triple (see e.g. Liu and R\"ockner~\cite{Rockner2015}). We also emphasize that although our paper focuses on the BSPDEs with fractional Laplacian, our method can be naturally extended to study the BSPDEs with more general nonlocal operators.  

As an important application, fractional BSPDEs serve as the adjoint equations in maximum principle of partially observed optimal stochastic control problems with jumps. For this kind of control problem, we first mention that Hu and Øksendal~\cite{Hu2008} investigated the linear-quadratic (LQ) problem involving jumped diffusion and partial information control. Later, Øksendal and Sulem~\cite{Oksendal2009} explored the general maximum principle for optimal control with partial information in stochastic systems, characterized by FBSDEs driven by Levy processes. Ahmed and Charalambous~\cite{Ahmed2013} provided direct proof of the stochastic maximum principle for the jump-diffusion controlled processes under a relaxed control framework. Recently, Zheng and Shi~\cite{Zheng2023} investigated the progressive optimal control problem of FBSDEs with random jumps in a broader context, and Zhang and Chen~\cite{MR4717295,MR4758317} studied controlled systems driven by anomalous sub-diffusions, which are some jump processes that are widely used to model many natural systems. It is worth noting that another effective method for studying nonlinear partially observed control problems is the separation principle, which elevates the controlled state to an infinite-dimensional Zakai equation by introducing a reference probability $\mathbb{Q}$. This approach not only ensures the time-consistency for the control problem, but also provides the linear properties of controlled system. However, when the controlled system is driven by a Lévy process, it corresponds to fractional Zakai equations and fractional adjoint BSPDEs. In this framework, relevant research results are currently lacking. To address this gap, we apply the findings of this paper to characterize the regularity of the fractional adjoint BSPDEs, which helps us to apply spike variational techniques to successfully establish the global maximum principle for the partially observed optimal control problems with jumps.

The rest of the paper is structured as follows. In Section~\ref{secnotation}, we introduce notations and functional Hölder spaces, alongside fundamental concepts concerning the fractional Laplacian. In Section~\ref{secresults}, we present our main results concerning the existence, uniqueness, and regularity of solutions to fractional BSPDEs~\eqref{linear-case 2} in Hölder spaces. In Section~\ref{seccontrol}, we apply our findings to fractional adjoint BSPDEs arising in partially observed optimal control problems with jumps. Finally, in Section~\ref{secproof}, we give the proof of our main results. 

\section{Preliminaries}\label{secnotation}
\subsection{Notation and spaces}\label{section notation}
Suppose $X$ is a Banach space equipped with the norm $\|\cdot\|_X$. For $p\in[1,+\infty)$, it is well known that $L^p(\Omega,X)$ is a Banach space equipped with the norm
\begin{align*}
    \|\xi\|_{L^p(\Omega,X)}:=\E\big[ \|\xi\|_X^p\big]^{\frac{1}{p}}.
\end{align*}
If $X=\R$, we simply write the space as $(L^p,\|\cdot\|_{L^p})$. 

Let $\mathcal{L}_{\mathbb{F}}^p(0,T;X)$ be the Banach space of all $X$-valued $\mathbb{F}$-adapted stochastic process $f$ with the finite norm
\begin{align*}
    \|f\|_{\mathcal{L}^p(0,T;X)}:=\left\{\begin{aligned}
        &\E\Big[\int_0^T \|f(t)\|_{X}^p\D s\Big]^{\frac{1}{p}},\quad\quad 1\leq p<+\infty,\\[0.3cm]
        &\operatornamewithlimits{esssup}_{\omega,t} \|f(w,t) \|_{X},\quad\quad p=\infty.
    \end{aligned}\right.
\end{align*}
Let $\mathcal{S}_{\mathbb{F}}^p(0,T;X)$ be the set of $X$-valued $\mathbb{F}$-adapted strongly continuous process $f$ such that
\begin{align*}
    \|f\|_{\mathcal{S}^p(0,T;X)}:=\E\Big[\max_{t\in[0,T]} \|f(t)\|_X^p\Big]^{\frac{1}{p}}<\infty.
\end{align*}
If $X=\R$, we write the space $(\mathcal{L}_{\mathbb{F}}^p(0,T;\R), \|\cdot\|_{\mathcal{L}^p(0,T;\R)})$ as $(\mathcal{L}_{\mathbb{F}}^p(0,T), \|\cdot\|_{\mathcal{L}^p})$, and write the space $(\mathcal{S}_{\mathbb{F}}^p(0,T;\R), \|\cdot\|_{\mathcal{S}^p(0,T;\R)})$ as $(\mathcal{S}_{\mathbb{F}}^p(0,T), \|\cdot\|_{\mathcal{S}^p})$ for simplicity. 

Next we define the functional Hölder spaces. Suppose $m$ is a nonnegative integer and $\beta\in(0,1)$ is a constant. For $X=\mathcal{L}_{\mathbb{F}}^p(0,T)$ or $X=\mathcal{S}_{\mathbb{F}}^p(0,T)$, define $C^m(\R,X)$ to be the Banach space of all measurable functions $\phi:\Omega \times [0,T]\times \R\rightarrow \R$ that are $m$-times continuously differentiable with respect to $x$ for each $(\omega,t,x)\in\Omega\times[0,T]\times\R$ and satisfy that
\begin{align*}
    \|\phi\|_{m,X}:=\sum_{k=0}^m\left\|D^k \phi\right\|_{0,X}<\infty,\quad \text{with}\quad \|\phi\|_{0,X}:=\sup_{x}\|\phi(x)\|_{X},
\end{align*}
where $D^k \phi(\omega,t,x) = \partial^k_x \phi(\omega,t,x)$ denotes $k$-th order partial derivative with respect to $x$. Then we define the the Banach space $C^{m+\beta}({\R,X})$ (or $C^{m,\beta}({\R,X})$) to be the set of $\phi\in C^m({\R,X})$ such that 
\begin{align*}
    \left[\phi\right]_{m+\beta,X}:=\sum_{k=0}^m\left[D^k \phi\right]_{\beta,X}<\infty, \quad\text{where}\quad\left[\psi\right]_{\beta,X}:=\sup_{x\neq y}\frac{\|\psi(x)-\psi(y)\|_{X}}{|x-y|^\beta}.
\end{align*}
We can equip $C^{m,\beta}(\R,X)$ with the norm $\|\cdot\|_{m+\beta,X}:=\|\cdot\|_{m,X}+[\,\cdot\,]_{m+\beta,X}$. Similarly, we can define the functional Hölder spaces for different Banach spaces such as $X=L^2(\Omega,\R)$. By the definition, it naturally has the relationship
\begin{align*}
   C^{m+1}(\R,X)\subset  C^{m+\beta_1}(\R,X)\subset C^{m+\beta_2}(\R,X)\subset C^{m}(\R,X), \quad\quad \forall \,0<\beta_2<\beta_1<1.
\end{align*}
If $X=\R$, then the above Banach spaces  $C^{m+\beta}(\R,\R)$, $C^{m}(\R,\R)$ are just the classical Hölder spaces, and we simply write $C^{m+\beta}(\R), C^{m}(\R)$ with $\|\cdot\|_{m+\beta},[\,\cdot\,]_{m+\beta},\|\,\cdot\,\|_{m}$. 

Additionally, let us recall the fractional functional Soblev spaces on $\R$. Denote $L^2(\R)$ be set of function $u$ such that $\|u\|_{L^2(\R)}:=\Big(\int_{\R}|u|^2\D x\Big)^{\frac{1}{2}}<+\infty$. For $\gamma>0$, denote $H^{\gamma}$ to be fractional Soblev space with the norm $\|u\|_{H^\gamma}:=\|\left(1-\Delta\right)^\frac{\gamma}{2}u\|_{L^2(\R)}$
and define $\mathbb{H}^\gamma(T):=\mathcal{L}^2(\Omega\times [0,T],H^{\gamma})$.

\subsection{Fractional Laplacian operator}
We consider the function $u:\R\rightarrow\R$ in the Schwartz space $\mathcal{S}(\R)$, which is the set of all rapidly decreasing $C^{\infty}(\R)$ functions. For $\alpha \in (0,2]$, we use the definition in~\cite{Kwasnicki2017} for the fractional operator 
\begin{align*}
   (-\Delta)^{\frac{\alpha}{2}}u(x):= \mathcal{F}^{-1}\left(|\xi|^{\alpha}\mathcal{F}(u)(\xi)\right)(x),
\end{align*}
with
\begin{align*}
    \mathcal{F}(u)(\xi):=\int_{\R}u(x)e^{i\xi x}\D x\quad\quad \text{and} \quad\quad \mathcal{F}^{-1}(\hat{u})(x):=\frac{1}{2\pi}\int_{\R}\hat{u}(\xi)e^{-i\xi x}\D \xi
\end{align*}
being the Fourier transformation and Fourier inverse transformation, respectively. For $u\in  \mathcal{S}'(\R)$, the fractional Laplacian satisfies that
\begin{align}\label{eq202407101}
     \langle(-\Delta)^{\frac{\alpha}{2}}u,\phi
    \rangle_{S'(\R),S(\R)}= \langle u,(-\Delta)^{\frac{\alpha}{2}}\phi\rangle_{S'(\R),S(\R)},\quad\quad \forall \phi\in\mathcal{S}(\R).
\end{align}
Note that when $\alpha=2$, the definition of $\Delta^1$ is in accordance with classical Laplacian $\Delta$, which is a local operator.

Especially, when $\alpha\in (1,2)$ it can be shown that (see e.g., the proposition 2.4 in~\cite{silvestre2007regularity} for details)
\begin{align}\label{pointwise formula}
    (-\Delta)^\frac{\alpha}{2}u(x)= C_\alpha \, P.V.\int_{\R}\frac{u(x)-u(y)}{|x-y|^{1+\alpha}}\D y=C_\alpha\int_{\R}\frac{u(x)-u(y)-D u(x)\cdot(x-y) }{|x-y|^{1+\alpha}}\D y,
\end{align}
where $C_\alpha=\frac{2^\alpha \Gamma\left(\frac{1+\alpha}{2}\right)}{\pi^{\frac{1}{2}}\Gamma\left(-\frac{\alpha}{2}\right)}$ and $u\in C^{\alpha+{\varepsilon}}(\R)$ for some $\varepsilon>0$ with $\int_{\R}\frac{|u(x)|}{1+|x|^{1+\alpha}}\D x<\infty.$

\section{Main results}\label{secresults}
Throughout the paper, we consider the fractional Laplacian order $\alpha \in (1,2]$ and make the following boundedness assumptions for the coefficients.
\begin{assumption}\label{Baisc assumption}
The leading coefficient $a:[0,T]\times\R\rightarrow \R^+$ belongs to the space $C^{\beta}\left(\R,{L}^{\infty}(0,T)\right)$, and has a uniformly positive lower bound, i.e. there exists $m>0$ such that $a\geq m$. All the coefficients $a,b,c,\sigma$ are bounded, i.e. there exists $K>0$ such that
\begin{align*}
    |a|+|b|+|c|+|\sigma|\leq K.
\end{align*}
\end{assumption}

Furthermore, we assume that the coefficients of BSPDEs are in the corresponding Hölder space as follows.

\begin{assumption}\label{general assumption}
   The coefficient $(b, c,\sigma):\Omega \times [0,T] \times \R \rightarrow \R$ belong to the space $C^{\beta}\left(\R,\mathcal{L}_{\mathbb{F}}^{\infty}(0,T)\right)$ such that 
   $$
   \|a\|_{\beta,{L}^{\infty}}+\|b\|_{\beta,\mathcal{L}^{\infty}}+\|c\|_{\beta,\mathcal{L}^{\infty}}+\|\sigma\|_{\beta,\mathcal{L}^{\infty}}<\Lambda,
   $$
   where the constants $\beta\in (2-\alpha,1)$ and $\Lambda>0$.
\end{assumption} 

Our main result is the sharp Hölder estimate of BSPDEs as follows. 
\begin{theorem}\label{Theorem genral}
    Let Assumptions \ref{Baisc assumption} and \ref{general assumption} hold. If the non-homogeneous term $f:\Omega \times [0,T] \times \R \rightarrow \R$ belongs to the space $C^{\beta}\left(\mathbb{R},\mathcal{L}^2_\mathbb{F}(0, T)\right)$ and the terminal condition $g:\Omega \times \R \rightarrow \R$ belongs to the space $C^{\frac{\alpha}{2}+\beta}\left(\mathbb{R},L^2\right(\Omega))$, then there exists a unique classical solution $(u,v)$ to the fractional BSPDE~\eqref{linear-case 2}.
Moreover, we have
\begin{align}\nonumber
    \Vert u\Vert_{\alpha+\beta,\mathcal{L}^2}+ \Vert u\Vert_{\beta,\mathcal{S}^2}+\Vert v\Vert_{\beta,\mathcal{L}^2}\leq C\big(\Vert g\Vert_{\frac{\alpha}{2}+\beta,{L}^2}+\Vert f\Vert_{\beta,\mathcal{L}^2}\big),
\end{align}
where the constant $C=C(\alpha,\beta,\Lambda,T,m,K)$.
\end{theorem}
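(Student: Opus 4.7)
The plan is to follow the three-step strategy announced in the introduction: first construct explicit solutions for a model problem with frozen leading coefficients, then derive a priori Hölder estimates by the freezing-coefficients method, and finally invoke the method of continuity to obtain existence for the general equation. The key new feature, compared to the classical second-order case treated in~\cite{tang2016cauchy}, is that the operator $-(-\Delta)^{\frac{\alpha}{2}}$ is nonlocal, so I cannot simply localize in a small ball; every manipulation must be compatible with the global structure of the fractional heat kernel.

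The first step is the model problem in which $a=a(t)$ and $\sigma=\sigma(t)$ are deterministic, while $b\equiv c\equiv 0$. Taking Fourier transforms in $x$ diagonalises the fractional Laplacian as multiplication by $|\xi|^{\alpha}$, which reduces the BSPDE to a parameter family of linear BSDEs. Using the martingale representation theorem together with the fractional heat kernel $p^{\alpha}_{s,t}$ (with variance profile $\int_s^t a(r)\D r$), I would write the unique strong solution explicitly in the form
\begin{align*}
u(t,x)=\E\Big[\,p^{\alpha}_{t,T}\!*g\,(x)+\iT_{\!\!\!t} p^{\alpha}_{t,s}\!*(f+\sigma v)(s,x)\,\D s\;\Big|\;\mathcal{F}_t\,\Big],
\end{align*}
with $v$ identified through the martingale representation. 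Standard convolution estimates for $p^{\alpha}_{s,t}$ yield the sharp Hölder bound for this model case, including the gain of $\alpha$ spatial derivatives for $u$ and the $\beta$-Hölder continuity of $v$. This base estimate is the analytic core on which everything else rests.

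The second step is the a priori Hölder estimate under Assumptions~\ref{Baisc assumption} and~\ref{general assumption}. For each base point $x_{0}\in\R$ I rewrite the equation as
\begin{align*}
-\D u=\Big[-a(t,x_{0})(-\Delta)^{\frac{\alpha}{2}}u+\sigma(t,x_{0})v+F(t,x;x_{0})\Big]\D t-v\,\D W_{t},
\end{align*}
where $F$ collects the frozen-coefficient errors $(a(t,x_{0})-a(t,x))(-\Delta)^{\frac{\alpha}{2}}u$, $b u_{x}$, $cu$, $(\sigma(t,x_{0})-\sigma(t,x))v$ and $f$. Applying the model estimate at $x_{0}$ and using the $C^{\beta}$ regularity of the coefficients, the error norms can be controlled by $\|u\|_{\alpha+\beta,\mL^{2}}+\|v\|_{\beta,\mL^{2}}$ multiplied by a factor involving $\Lambda$ and the Hölder exponent gap $\beta-(2-\alpha)$; the hypothesis $\beta\in(2-\alpha,1)$ is precisely what makes the error genuinely lower order than the main term. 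Interpolation between $C^{0}$, $C^{\beta}$ and $C^{\alpha+\beta}$ spaces then lets me absorb the dangerous contributions into the left-hand side, possibly after first establishing a version of the estimate on a small time interval and iterating in $T$ by the boundedness of $c$, $b$, $\sigma$. The outcome is the target estimate in terms of $\|f\|_{\beta,\mL^{2}}$ and $\|g\|_{\frac{\alpha}{2}+\beta,L^{2}}$.

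The third step is existence and uniqueness via the method of continuity. Set $\mL_{\lambda}:=-[\lambda a+(1-\lambda)a_{0}](-\Delta)^{\frac{\alpha}{2}}+\lambda(b\partial_{x}+c\cdot)$ and $\sigma_{\lambda}:=\lambda\sigma$, with $a_{0}$ a constant chosen so that $\mL_{0}$ falls into the model class solved in Step 1. The a priori bound derived in Step 2 applies uniformly in $\lambda\in[0,1]$, so the set of $\lambda$ for which the BSPDE is well-posed in the Hölder class $C^{\alpha+\beta}(\R,\mL^{2})\cap C^{\beta}(\R,\mathcal{S}^{2})$ is both open and closed in $[0,1]$, and contains $\lambda=0$. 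Uniqueness is a direct consequence of the a priori estimate applied to the difference of two solutions with zero data.

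I expect the main obstacle to be the freezing step: because $(-\Delta)^{\frac{\alpha}{2}}u(t,x)$ depends on the whole profile of $u$, the error $[a(t,x)-a(t,x_{0})](-\Delta)^{\frac{\alpha}{2}}u$ cannot be made small by restricting to a neighbourhood of $x_{0}$, and must instead be treated via a clean Hölder-seminorm decomposition that exploits the $\beta>2-\alpha$ assumption. Controlling the lower-regularity unknown $v$ in $C^{\beta}(\R,\mL^{2})$ simultaneously with the $C^{\alpha+\beta}$ control of $u$, while keeping the $\sigma v$ coupling term manageable, is the delicate part of the argument.
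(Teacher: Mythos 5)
Your overall architecture --- explicit solution of the space-invariant model problem via the fractional heat kernel and martingale representation, a priori H\"older estimates by freezing coefficients, and then the method of continuity --- is exactly the strategy of the paper, and Steps 1 and 3 are essentially sound as sketched (the paper removes the $\sigma v$ coupling by a Girsanov change of measure so that the kernel representation becomes genuinely explicit rather than implicit in $v$, and it runs the continuity argument as a finite iteration of contractions of step size $\bar\tau$ rather than an open-closed argument, but these are cosmetic differences).

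The genuine gap is in Step 2. You freeze the coefficients at a single point $x_{0}$ and claim the error $F(t,x;x_{0})$, in particular $\bigl(a(t,x_{0})-a(t,x)\bigr)(-\Delta)^{\frac{\alpha}{2}}u$, can be absorbed because of ``a factor involving the exponent gap $\beta-(2-\alpha)$.'' This does not work as stated: the model estimate is a \emph{global} estimate in $x$, and globally $|a(t,x_{0})-a(t,x)|$ is only bounded by $2K$, not small, so $\|F(\cdot,\cdot;x_{0})\|_{\beta,\mathcal{L}^2}$ is of the same order as $K\|u\|_{\alpha+\beta,\mathcal{L}^2}$ and cannot be absorbed into the left-hand side. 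The paper's fix is to multiply the solution by cutoffs $\phi_{\theta}^{z}$ supported on balls of radius $2\theta$, so that the coefficient oscillation on the support is $O(\theta^{\beta})$ and hence small for small $\theta$, and then to recover the global norm from the localized ones via Lemma~\ref{tuncture bound}. This forces you to control the commutator $(-\Delta)^{\frac{\alpha}{2}}(u\phi_{\theta}^{z})-\phi_{\theta}^{z}(-\Delta)^{\frac{\alpha}{2}}u$ (the term $K_{1}$ in the paper), which is precisely where the nonlocality bites and which is handled by the pointwise integral representation~\eqref{pointwise formula}; your sketch omits this entirely because you never introduce the cutoff. Moreover, the role you assign to the hypothesis $\beta>2-\alpha$ is not the one it actually plays: it guarantees $\alpha+\beta>2$, so that $u$ is twice continuously differentiable and the interpolation inequality of Lemma~\ref{Hölder bound} (with $\gamma=\alpha+\beta-2$) is available to absorb the intermediate norms $\|u\|_{2,\mathcal{L}^2}$ and $[u]_{1+\gamma,\mathcal{L}^2}$ with an arbitrarily small multiple of $[u]_{\alpha+\beta,\mathcal{L}^2}$; it is the cutoff scale $\theta$, not the gap $\beta-(2-\alpha)$, that produces the smallness of the frozen-coefficient error. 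You correctly identify this as the delicate point, but the proposal does not supply the idea that resolves it.
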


\begin{remark}
   The case of $\alpha=2$ is exactly Theorem 4.3 in \cite{tang2016cauchy}.
\end{remark}

\begin{remark}
    In fact, the fractional BSPDE is closely related to FBSDE driven by the jump process. Consider the jumped FBSDE as follows
    \begin{align}\label{FBSDE}
    \left\{
\begin{aligned}
    &\D X_t=b(t,X_t)\D t+a^\frac{1}{\alpha} (t,X_{t-})\,\D M_t, \\[0.1cm]
    &\D Y_t=-\left[c(t,X_t) Y_t+\sigma(t,X_t)Z_t+f(t,X_t)\right] 
   \D t+Z_t\D W_t +\int_{\R}K(t-,x) \tilde{N}(\D t,\D x)
    ,\\[0.1cm]
    &X_0=x, \quad Y_T=g(X_T).
\end{aligned}
    \right.
\end{align}
%where $\theta_t=\sigma^{-1}_t(b_t-\sigma_t)$.
%Define $(u,v)$ is the solution of
%\begin{align*}
 %   \D u(t,x)=(-\Delta)^{\frac{\alpha}{2}}u(t,x)|\sigma(t,x)|^{\alpha}+h(t,x,u(t,x),q(t,x),K_t(x,\cdot))\D t+q(t,x)\D  W_t.
%\end{align*}
For the solution $(u,v)$ to BSPDE~\eqref{linear-case 2}, it yields from It{\^o}-Ventzell formula (see e.g. \cite{oksendal2007ito}) that
\begin{align*}
    \D u(t,X_t)=&p(t,X_t)\D t+q(t,X_t)\D W_t+b(t,X_t) D u(t,X_t)\D t\\[0.1cm]
    &+\int_{\R} \left(u(t,X_t+x\cdot a^\frac{1}{\alpha} (t,X_{t}))-u(t,X_t)-x\cdot D u(t,X_t)a^\frac{1}{\alpha} (t,X_{t})\mathbbm{1}_{|x|<1}\right)\D t\\
    &+\int_{\R} \left(u(t-,X_{t-}+xa^\frac{1}{\alpha} (t-,X_{t-})-u(t-,X_{t-})\right)\tilde{N}(\D t,\D x),
\end{align*}
where 
$$p(t,x)=a(t,x)(-\Delta)^{\frac{\alpha}{2}}v
   (t,x)-b(t,x)Du(t,x)-c(t,x)u(t,x)-\sigma(t,x)v(t,x).
   $$
Then we can verify that
\begin{align*}
  &(Y_t,Z_t,K(t,x))=(u(t,X_t),v(t,X_t),u(t,X_t+x\sigma(t,X_t))-u(t,X_t))
 % \\ & 
 % K(t,x)=u(t,X_t+x\sigma(t,X_t))-u(t,X_t).
\end{align*}
is the solution of~\eqref{FBSDE}. In other words, FBSDE~\eqref{FBSDE} is a stochastic representation of fractional BSPDE~\eqref{linear-case 2}, and one can use the probabilistic approach to investigate the regularity of fractional BSPDE~\eqref{linear-case 2}.
\end{remark}

In particular, if the coefficients $a,b,c,f,\sigma,g$ are deterministic, then the BSPDE is degenerate to a PDE as follows,  i.e., $v\equiv 0$ and 
\begin{align}\label{linear-case 20240623}
\left\{
\begin{aligned}
  u_t(t,x)=&\left[a(-\Delta)^{\frac{\alpha}{2}}u+b{ u_x}+cu+f\right](t,x),\quad\quad  (t,x)\in [0,T)\times \mathbb{R},\\
    u(T,x)=&g(x),\qquad\qquad\qquad\qquad\qquad\qquad\qquad\qquad\quad x\in\R.   
\end{aligned}
\right.
\end{align}
Then by Theorem~\ref{Thm1}, we have the following corollary for the deterministic fractional heat equation.
\begin{corollary}\label{coro deterministic}
      Let Assumptions \ref{Baisc assumption} and \ref{general assumption} hold, and let the coefficients $a,b,c,f,\sigma,g$ be deterministic. If $f \in C^{\beta}\left(\mathbb{R},{L}^2(0, T)\right)$ and $g \in C^{\frac{\alpha}{2}+\beta}\left(\mathbb{R}\right)$, then there exists a unique classical solution $u$ to PDE~\eqref{linear-case 20240623}.
Moreover, the solution satisfies that
\begin{align}\nonumber
    \Vert u\Vert_{\alpha+\beta,L^2(0,T)}+ \Vert u\Vert_{\beta,C^0}\leq C\big(\| g\|_{\frac{\alpha}{2}+\beta}+\Vert f\Vert_{\beta,{L}^2(0,T)}\big),
\end{align}
where the constant $C=C(\alpha,\beta,\Lambda,T,m,K)$.
\end{corollary}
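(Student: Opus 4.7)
The plan is to deduce this deterministic statement from Theorem~\ref{Theorem genral} together with a uniqueness argument. First, I would observe that the deterministic coefficients $a, b, c, f, \sigma, g$, viewed as $\omega$-constant random functions, satisfy Assumptions~\ref{Baisc assumption} and~\ref{general assumption}. Hence Theorem~\ref{Theorem genral} applies and produces a unique $(u, v)$ solving BSPDE~\eqref{linear-case 2} in the classes $C^{\alpha+\beta}(\R, \mathcal{L}^2) \cap C^\beta(\R, \mathcal{S}^2)$ and $C^\beta(\R, \mathcal{L}^2)$ respectively, together with the Hölder norm estimate. The corollary then amounts to showing that $v \equiv 0$ and $u$ is non-random, for once this is established the stochastic norms $\mathcal{L}^2(0,T)$ and $\mathcal{S}^2(0,T)$ collapse to the deterministic $L^2(0,T)$ and $C^0[0,T]$ norms appearing in the statement.

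To prove $v \equiv 0$, I would independently construct a deterministic classical solution $\tilde u$ of PDE~\eqref{linear-case 20240623} and then invoke BSPDE uniqueness. Constructing $\tilde u$ is essentially a special case of the proof of Theorem~\ref{Theorem genral}: when $a$ is spatially constant the fractional heat kernel provides an explicit terminal-value representation (the martingale representation step is superfluous because the terminal datum is deterministic and $v$ can be taken zero a priori), while the freezing-coefficient and continuation arguments for variable $a$ carry over verbatim with every stochastic integral set to zero. This yields $\tilde u \in C^{\alpha+\beta}(\R, L^2(0,T)) \cap C^\beta(\R, C^0[0,T])$ solving PDE~\eqref{linear-case 20240623} classically, with the claimed Hölder bound. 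The pair $(\tilde u, 0)$ is then a classical solution of BSPDE~\eqref{linear-case 2}, since the $\sigma v$ term vanishes and the stochastic integrand is zero; the uniqueness part of Theorem~\ref{Theorem genral} therefore forces $(u, v) = (\tilde u, 0)$, so $u = \tilde u$ solves the PDE with the desired estimate.

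The main obstacle is the independent construction of $\tilde u$, but since it amounts to running the proof of Theorem~\ref{Theorem genral} with $v \equiv 0$ throughout — which simplifies rather than complicates every estimate, as there are no stochastic integrals to control and the $\mathcal{S}^2$-norm reduces to a sup in time — this is a streamlined special case rather than a genuinely new argument. In particular, no Hölder-space machinery beyond that of Theorem~\ref{Theorem genral} is needed, and the bound on $\Vert u\Vert_{\alpha+\beta,L^2(0,T)}+\Vert u\Vert_{\beta,C^0}$ transfers directly from the BSPDE estimate once $v$ is known to vanish.
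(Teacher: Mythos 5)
Your proposal is correct and follows essentially the route the paper intends: apply Theorem~\ref{Theorem genral} to the deterministic data (which trivially satisfy the stochastic hypotheses with norms collapsing to their deterministic counterparts), and identify $v\equiv 0$ by exhibiting the deterministic pair $(\tilde u,0)$ as a BSPDE solution and invoking uniqueness. The paper itself only asserts the degeneration $v\equiv 0$ without detail; your uniqueness argument (or, equivalently, observing that in the model case the martingale-representation integrands $q,Z$ of deterministic $g,f$ vanish in~\eqref{eq108} and that the continuation fixed point preserves the deterministic subspace) is the natural way to fill that in.
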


\section{Application: Partially observed optimal control problem with jumps}\label{seccontrol}
In this section, we apply our results of fractional BSPDEs to study the partially observed stochastic optimal control problems driven by $\alpha$-stable Lévy processes.

\subsection{Optimal control problem of fractional Zakai equation}

Let $(\Omega, \mathcal{F}, \mathbb{P})$ be a complete probability space, equipped with the standard one-dimensional Brownian motion $W$. We denote $N(\D t,\D x)$ to be the Possion measure with density measure $\nu(\D x)\D t=\frac{C_{\alpha}} {|x|^{1+\alpha}}\D x\D t$, where $C_\alpha$ is the constant defined in~\eqref{pointwise formula}. Let $\tilde{N}(\D t,\D x)= {N}(\D t,\D x) - \nu(\D x)\D t$. Then the $\alpha$-stable Lévy process $M$ is defined as
\begin{align}
    M_t:=\int_0^t\int_{|x|<1}x\tilde{N}(\D s,\D x)+\int_0^t\int_{|x|\geq 1}x{N}(\D s,\D x).\label{eq 821}
\end{align} 

In the partially observed problems, the admissible control $u$ is assumed to be adapted to the natural filtration $\mathbb{F}^Y:=(\mathcal{F}_t^Y)$ generated by observation $Y$, i.e. $\mathcal{F}_t^Y:=\sigma\left\{Y_s:s\leq t\right\}$. For a Borel set $U\subset \R$, let $U_{ad}\subset\mathcal{L}_{\mathbb{F}^Y}(0,T;U)$ denote all the admissible controls, which are stochastic processes taking values in $U$. For each $u\in U_{ad} $, the state process $X^u$ and the observation process $Y$ in the controlled system are governed by
\begin{align*}
\left\{
\begin{aligned}
     &\D X^u_t=k(t,X^u_t,u_t)\D t+\mu(t)\D M_t,\quad\quad X^u_0\sim \pi_0,\\
    & \D Y_t=h(t,X_t^u)\D t+\D W_t,\quad\quad Y_0=0,
\end{aligned}
\right.
\end{align*}
where the coefficient functions $k: \Omega\times [0, T]\times \R \times U \rightarrow \R$, $\mu: [0, T] \rightarrow \R$, $h:\Omega\times[0, T] \times \R\rightarrow \R$ and the initial probability measure $\pi_0$ are given. The cost functional is 
$$
J(u):=\E^{\mathbb{P}}\left[\int_0^T f(s,X^u_s,u_s)\mathrm{d} s+g(X^u_T)\right],
$$
where running cost $f:\Omega\times [0, T]\times \R \times U \rightarrow \R$ and terminal cost $g:\Omega\times \R\rightarrow\R$. We aim to find the optimal control $\bar{u}\in U_{ad} $ such that
\begin{align*}
J(\bar{u}) = \inf_{u\in\mathcal{U}_{ad}}J(u).
\end{align*}

The original problem in the above strong setting is ill-posed and time-inconsistent (see e.g.~\cite{Bensoussan1992,Tang1998}). The classical way to solve this issue is the so-called reference probability approach, in which the new probability $\mathbb{Q}^*$ is introduced as
$$
\frac{\mathrm{d}\mathbb{Q}^*}{\mathrm{d}\mathbb{P}}=(L^u_T)^{-1}\quad\text{with}\quad L^u_t=\exp\left(\int_0^th(s,X^u_s)\mathrm{d}Y_s-\frac12\int_0^t|h(s,X^u_s)|^2\mathrm{d}s\right).
$$
By Girsanov's theorem, $Y$ is a standard Brownian motion under the reference probability $\mathbb{Q}^*$. So the cost functional in the original partially observed problem is equivalent to 
\begin{align}\label{eq 812}
J(u) = \E^{\mathbb{Q}^*}\left[\int_0^T L_s^u f(s,X^u_s,u_s)\mathrm{d} s + L_T^u g(X^u_T)\right],
\end{align}
which makes the control problem to be well-posed.

Next we introduce the unnormalized conditional density of state $X^u$ to make the control problem to be time-consistent. Given an admissible control $u\in\mathcal{U}_{ad}$, denote $p^u$ to be the solution of the fractional Zakai equation
\begin{align}\label{eq829}
    \mathrm{d} p^u(t,x)=\left[-a(t)(-\Delta)^{\frac{\alpha}{2}}p^u-D(k^u p^u)\right](t,x)\mathrm{d} t+ (h p^u)(t,x)\mathrm{d} Y_t,\quad\quad p^u(0,x)=p_0(x),
\end{align}
where $a(t)=\left|\mu(t)\right|^\alpha$ and $k^u(t,x)=k(t,x,u_t)$. It is well-known that $p^u$ is the unnormalized conditional density of state $X^u$ under the reference probability $\mathbb{Q}^*$ (see e.g. Theorem 3.6 of \cite{Ceci2014}),
$$
\pi_t^u(\phi):=\E^{\mathbb{Q}^*} \left[ L^u_t \phi(X_t^u)\big|\mathcal{F}^Y_t\right]=\int_\R  \phi(x)p^u(t,x)\mathrm{d}x,\quad\quad \forall\, \phi \in C_b(\R).
$$ 
With the unnormalized conditional density $p^u$, the cost functional in~\eqref{eq 812} can be written as
\begin{align}\label{eq 824}
    J(u)=\E^{\mathbb{Q}^*}\left[\int_0^T \int_{\R}f(s,x,u_s)p^u(s,x)\mathrm{d} x\mathrm{d} s+\int_{\R}g(x)p^u(T,x)\mathrm{d} x\right].
\end{align}

Thus it is equivalent to study the stochastic control problem with the cost functional~\eqref{eq 824} and the infinite-dimensional states~\eqref{eq829}, which is well-posed and time consistent. Apart from ensuring well-posedness and time-consistency, elevating the dimension of state equation to infinity introduces additional benefits, notably the transformation of the state equation~\eqref{eq829} into a linear and homogeneous form.

Next we show that, under this infinite-dimensional setting the adjoint equation in the maximum principle is exactly the fractional BSPDE that we concern in this paper. When driving the maximum principle for the partially observed problem with the jumped system, we will work on the reference probability space $(\Omega,\Q^*,\mathcal{F})$. For convenience, we write the expectation as $\E$ instead of $\E^{\Q^*}$. And we make the following technical assumption.
\begin{assumption}\label{density} Suppose that
\begin{enumerate}[(i)]
    \item the initial distribution $\pi_0$ has a density with respect to the Lebesgue measure 
    $$
    p_0\in C^{\frac{\alpha}{2}+\beta}(\R,L^2(\Omega))\cap L^2(\Omega,H^{\frac{\alpha}{2}}(\R)); 
    $$
   \item the leading coefficient $a: [0,T] \rightarrow\R^+$ is non-degenerate and bounded, i.e. there exists a constant $K_0>0$ such that
     $$
     \frac{1}{K_0}\leq a\leq K_0;
     $$
   \item  the control domain $U$ is a Borel subset of $\R$, and the admissible control set is
   \begin{align*}
      {U}_{ad}:=\left\{u:[0,T]\times \Omega\rightarrow U \,\bigg|\, u \text{ is } \text{$\mathbb{F}^Y$-adapted and} \operatornamewithlimits{esssup}_{t\in[0,T]}\E[|u|^2]<\infty\right\};
   \end{align*}
    \item\label{assumption Hölder} for all $v\in U$, the drift coefficient $k(\cdot,v)$ is in $C^{1+\beta}(\R,\mathcal{L}^\infty)\cap \mathcal{L}_{\mathbb{F}}^2((0,T);H^1(\R))$ and the observation coefficient $h$ is in  $C^{\frac{\alpha}{2}+\beta}(\R,\mathcal{L}^\infty)$; moreover, there is a constant $K_1>0$ such that
   $$
\|k(\cdot,v)\|_{1+\beta,\mathcal{L}^\infty}+\|h\|_{\frac{\alpha}{2}+\beta,\mathcal{L}^\infty}\leq K_1;
   $$
   \item \label{assumption growth}
   \begin{comment}
       the coefficient $k$ is twice continuously differentiable with respect to $(x,u)$ and $f$ is continuously differentiable with respect to $u$; moreover, there is a constant $K_2>0$ such that
   $$|k_u|+|k_{xu}|+|f|+|f_u|+|g|<K_2;$$
   \end{comment}
    the coefficient $k$ is continuously differentiable with respect to $x$ and $k,k_x,f$ is continuous with respect to $u$; moreover, there is a constant $K_2>0$ such that
   $$|f|+|g|<K_2;$$ 
   \item  \label{assumption SPDE}   for each fixed $v\in U$, the running cost $f(\cdot,v)$ is in  $C^{\beta}(\R,\mathcal{L}_{\mathbb{F}}^2(0,T))\cap \mathcal{L}_{\mathbb{F}}^2((0,T);L^2(\R))$ and the terminal cost $g$ is in $C^{\frac{\alpha}{2}+\beta}(\R,L^2(\Omega))\cap L^2(\Omega,L^2(\R))$.
\end{enumerate}

\end{assumption}

%\begin{assumption}\label{assumption Hölder}
%There exist constants $\beta\in(2-\alpha,1)$ and $K_1>0$ such that coefficients $b\in C^{1+\beta}(\R,\mathcal{L}^\infty)$, $h\in  C^{\frac{\alpha}{2}+\beta}(\R,\mathcal{L}^\infty)$ and 
 %  $$
   %\|b\|_{1+\beta,\mathcal{L}^\infty}+\|h\|_{\frac{\alpha}{2}+\beta,\mathcal{L}^\infty}\leq K_1.
%   $$
%\end{assumption}

%\begin{assumption}\label{assumption control}
 %   The Control domain $U$ is convex, coefficients $a,b,b_x,f$ are $C^1$ in $u$, $k_u,b_{xu},f_u$ are bounded by $K_2$. For each $(t,u)\times [0,T]\times U$, suppose that $f(t,\cdot,u)\in L^2(\R)$ and $b(t,\cdot,u)\in H^1(\R)$ with bounded $b$ and $D b$.
%\end{assumption}

%\begin{assumption}\label{assumption SPDE}
%         Coefficients $b$ is $C^1$ with respect to $x$ and $b,b_x,h$ are bounded by positive constant $K_3$. $(p_0,g)\in C^{\frac{\alpha}{2}+\beta}(\R,L^2(\Omega))\cap L^2(\Omega,H^{\frac{\alpha}{2}}(\R))\times C^{\frac{\alpha}{2}+\beta}(\R,L^2(\Omega))\cap L^2(\Omega,L^2(\R))$
%\end{assumption}

\subsection{Fractional adjoint equations}
We first define the weak solution of the fractional state equation.
\begin{definition}
    Given $u\in\mathcal{U}_{ad}$, we say $p^u\in\mathbb{H}^\alpha(T)$ is a weak solution of Zakai equation~\eqref{eq829}, if for all $\phi\in C^\infty_0(\R)$, it holds that
\begin{align*}
   \langle p^u(t,\cdot), \phi\rangle_{L^2}=&\langle p_0, \phi\rangle_{L^2}+\int_0^t\left[- a(s) \langle p^u(s,\cdot),(-\Delta)^{\frac{\alpha}{2}}\phi\rangle_{L^2}+(k(s,\cdot,u_s)p^u(s,\cdot),D \phi\rangle_{L^2}\right]\mathrm{d} s\\
&+\int_0^t\langle h(s,\cdot)p^u(s,\cdot),\phi\rangle_{L^2}\mathrm{d} Y_s,
\end{align*}
for all $t\in[0,T]$, $\mathbb{Q}^*$-a.e.
\end{definition} 

In view of Theorem 2.15 in \cite{Kim2012}, we get the following $L^p$ estimate for the solution to fractional Zakai equation.
\begin{lemma}\label{lemma240713}
   Let Assumption \ref{density} hold. For each $u\in \mathcal{U}_{ad}$, the Zakai equation~\eqref{eq829} has a unique weak solution $p^u\in \mathbb{H}^\alpha$ such that
    \begin{align*}
        \E\int_0^T\|p^u(s)\|^2_{H^\alpha}\mathrm{d}s\leq C \, \E\|p_0\|^2_{H^\frac{\alpha}{2}},
    \end{align*}
    where $C=C(K_0,K_1,T,\alpha)>0$ is a constant.
\end{lemma}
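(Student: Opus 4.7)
The plan is to invoke Theorem~2.15 in Kim--Kim~\cite{Kim2012}, after verifying that the Zakai equation~\eqref{eq829} fits into the $L^p$-framework of stochastic parabolic equations with fractional Laplacian leading operator. First I would expand the divergence term as $-D(k^u p^u) = -k^u Dp^u - (Dk^u)\,p^u$, recasting the equation in the form
\begin{align*}
dp^u = \bigl[-a(t)(-\Delta)^{\alpha/2}p^u + \tilde{b}(t,x)\,Dp^u + \tilde{c}(t,x)\,p^u\bigr]dt + h(t,x)\,p^u\,dY_t,
\end{align*}
with $\tilde{b}=-k^u$ and $\tilde{c}=-Dk^u$. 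By Assumption~\ref{density}(iv), $k(\cdot,v)\in C^{1+\beta}(\R,\mathcal{L}^\infty)$ uniformly in $v\in U$, so both $\tilde{b}$ and $\tilde{c}$ are uniformly bounded, and $h\in C^{\alpha/2+\beta}(\R,\mathcal{L}^\infty)$ is likewise bounded. Uniform ellipticity $K_0^{-1}\le a(t)\le K_0$ is Assumption~\ref{density}(ii), and $p_0\in L^2(\Omega,H^{\alpha/2})$ is Assumption~\ref{density}(i). These are precisely the hypotheses of Kim--Kim's theorem, which yields existence and uniqueness of $p^u\in\mathbb{H}^\alpha(T)$ together with the claimed estimate.

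For self-containedness, I would reprove the estimate via an Itô-energy method. Setting $\tilde{p}:=(1-\Delta)^{\alpha/4}p^u$, the equation transforms into one of the same form for $\tilde{p}$ (since $(1-\Delta)^{\alpha/4}$ commutes with $(-\Delta)^{\alpha/2}$), with $\|\tilde{p}(t)\|^2_{L^2(\R)}=\|p^u(t)\|^2_{H^{\alpha/2}}$. Applying Itô's formula in the Gelfand triple $H^{\alpha/2}\hookrightarrow L^2(\R)\hookrightarrow H^{-\alpha/2}$ to $\|\tilde{p}\|^2_{L^2(\R)}$ yields
\begin{align*}
\|p^u(t)\|^2_{H^{\alpha/2}} = \|p_0\|^2_{H^{\alpha/2}} &+ 2\int_0^t\bigl\langle -a(-\Delta)^{\alpha/2}p^u + \tilde{b}\,Dp^u + \tilde{c}\,p^u,\; p^u\bigr\rangle_{H^{\alpha/2}}ds \\
&+ \int_0^t\|h\,p^u\|^2_{H^{\alpha/2}}\,ds + M_t,
\end{align*}
where $M_t$ is a local martingale. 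Plancherel's identity gives the coercive bound $\langle -a(-\Delta)^{\alpha/2}p^u,p^u\rangle_{H^{\alpha/2}}\le -K_0^{-1}\|p^u\|^2_{H^\alpha}+C\|p^u\|^2_{H^{\alpha/2}}$. The first-order drift is controlled via duality and interpolation by $\varepsilon\|p^u\|^2_{H^\alpha}+C_\varepsilon\|p^u\|^2_{H^{\alpha/2}}$. The Itô correction $\|h\,p^u\|^2_{H^{\alpha/2}}$ is bounded by $C\|p^u\|^2_{H^{\alpha/2}}$ using that $h\in C^{\alpha/2+\beta}$ is a pointwise multiplier on $H^{\alpha/2}$.

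Choosing $\varepsilon$ small enough to absorb the $\|p^u\|^2_{H^\alpha}$ terms into the coercive contribution, taking expectation (which eliminates $M_t$ after a standard localization argument), and applying Gronwall's inequality to $\E\|p^u(t)\|^2_{H^{\alpha/2}}$ then produces the stronger estimate
\begin{align*}
\sup_{t\in[0,T]}\E\|p^u(t)\|^2_{H^{\alpha/2}} + \E\int_0^T\|p^u(s)\|^2_{H^\alpha}\,ds \le C\,\E\|p_0\|^2_{H^{\alpha/2}},
\end{align*}
with $C=C(K_0,K_1,T,\alpha)$. The main obstacle I anticipate is the multiplicative-noise correction: the bound $\|h\,p^u\|^2_{H^{\alpha/2}}\le C\|p^u\|^2_{H^{\alpha/2}}$ is a genuine multiplier estimate that demands $h$ be Hölder of order strictly greater than $\alpha/2$, which is precisely what Assumption~\ref{density}(iv) delivers via $\alpha/2+\beta$. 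A secondary technical point is the rigorous justification of Itô's formula for $\|\cdot\|^2_{H^{\alpha/2}}$, which calls for either the $(1-\Delta)^{\alpha/4}$-transformation above or a mollification-and-pass-to-limit argument on smooth data.
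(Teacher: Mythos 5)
Your proposal is correct and takes essentially the same route as the paper: the paper proves this lemma solely by verifying that Assumption 4.1 places the Zakai equation within the hypotheses of Theorem 2.15 of Kim--Kim \cite{Kim2012} (with the divergence term expanded exactly as you do), and offers no further argument. Your supplementary It\^o-energy sketch goes beyond what the paper provides and is sound in outline, though the phrase ``the equation transforms into one of the same form for $\tilde p$'' glosses over the fact that $(1-\Delta)^{\alpha/4}$ does not commute with multiplication by $k^u$, $Dk^u$ or $h$, so the commutator (equivalently, multiplier) estimates you list afterwards are genuinely needed rather than optional.
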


The next lemma is concerned with Hölder continuity of solution to the fractional Zakai equation.
\begin{lemma}
    Let Assumption \ref{density} hold. For each $u\in \mathcal{U}_{ad}$, there exists a unique strong solution $p^u\in C^{{\alpha}+\beta}(\R,\mathcal{L}^2$) to the fractional Zakai equation~\eqref{eq829} such that
    $$
    \|p^u\|_{\alpha+\beta,\mathcal{L}^2}\leq C \,  \|p_0\|_{\frac{\alpha}{2}+\beta,L^2},
    $$
    where $C=C(\alpha,\beta,T,K_0,K_2)$ is a constant.
\end{lemma}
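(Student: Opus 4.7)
The plan is to mirror the proof strategy of Theorem~\ref{Theorem genral}, adapted to the forward time direction of~\eqref{eq829}. Expanding $D(k^u p^u)=(Dk^u)p^u+k^u Dp^u$ puts the Zakai equation into the canonical structure of a fractional SPDE with leading coefficient $a(t)$, first-order coefficient $-k^u$, zeroth-order coefficient $-Dk^u$, and noise coefficient $h$. By Assumption~\ref{density}(ii)--(iv) these coefficients satisfy the natural analogues of Assumptions~\ref{Baisc assumption} and~\ref{general assumption}, and Lemma~\ref{lemma240713} already delivers the $\mathbb{H}^\alpha(T)$-weak solution that I would then show is in fact the strong solution in $C^{\alpha+\beta}(\R,\mathcal{L}^2)$.

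I would represent $p^u$ via the Duhamel formula based on the fractional heat kernel $G^a_\cdot$ of $-a(\cdot)(-\Delta)^{\alpha/2}$:
\begin{align*}
p^u(t,x)=(G^a_t\ast p_0)(x)+\int_0^t\bigl((DG^a_{t-s})\ast (k^u p^u)(s,\cdot)\bigr)(x)\,ds+\int_0^t\bigl(G^a_{t-s}\ast (hp^u)(s,\cdot)\bigr)(x)\,dY_s,
\end{align*}
where integration by parts has moved the spatial derivative onto the kernel. The a priori Hölder estimate is then obtained in two stages: first, an explicit constant-coefficient bound in which $k^u,h$ are frozen at a reference point $x_0$, where sharp pointwise estimates on $G^a_\cdot$ together with the Burkholder--Davis--Gundy inequality yield the desired $C^{\alpha+\beta}$-regularity; then the general case via the freezing-coefficients method and the continuation argument, exactly as in the proof of Theorem~\ref{Theorem genral}. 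Since~\eqref{eq829} carries no inhomogeneous forcing term, only $\|p_0\|_{\frac{\alpha}{2}+\beta,L^2}$ appears on the right-hand side of the final bound.

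The main obstacle I anticipate is the sharp Hölder analysis of the stochastic convolution $\int_0^t G^a_{t-s}\ast(hp^u)(s,\cdot)\,dY_s$: this is a genuine forward It\^o integral rather than the martingale-representation object used for BSPDEs in Section~\ref{secresults}, and the $(t-s)^{1/2}$ scaling produced by Burkholder--Davis--Gundy effectively costs half an order of spatial smoothness. It is precisely to absorb this loss that $p_0$ is required to be $C^{\frac{\alpha}{2}+\beta}$ (not merely $C^\beta$) and that $h$ is taken in $C^{\frac{\alpha}{2}+\beta}(\R,\mathcal{L}^\infty)$ in Assumption~\ref{density}(iv). Balancing the $\alpha$-order smoothing of the kernel against this half-order loss is the delicate step, and the freezing-coefficients plus continuation machinery from Section~\ref{secresults} is exactly what closes the estimate.
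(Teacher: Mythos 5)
Your route is viable but genuinely different from the paper's, and considerably heavier. The paper does not set up a mild formulation or rerun the freezing-coefficients/continuation machinery for the forward equation. Instead it splits $p^u=p^{u,1}+p^{u,2}$: the component $p^{u,1}$ solves the noise-free equation $\mathrm{d}p^{u,1}=[-a(t)(-\Delta)^{\alpha/2}p^{u,1}-D(k^u p^{u,1})]\,\mathrm{d}t$ with datum $p_0$ and is estimated pathwise by Corollary~\ref{coro deterministic}, giving $\|p^{u,1}\|_{\alpha+\beta,\mathcal{L}^2}\le C\|p_0\|_{\frac{\alpha}{2}+\beta,L^2}$; the component $p^{u,2}$ starts from zero and carries the noise $(hp^{u,2}+g^u)\,\mathrm{d}Y_t$ with $g^u=hp^{u,1}\in C^{\frac{\alpha}{2}+\beta}(\R,\mathcal{L}^2)$, and its $C^{\alpha+\beta}$ bound is imported wholesale from Theorem~5 of \cite{Mikulevicius2009}. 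So the hard step you correctly single out --- the sharp H\"older estimate of the stochastic convolution --- is never proved in the paper; it is delegated to that reference, and your plan amounts to reproving it. That is legitimate, and your accounting of the half-order loss (the It\^o isometry forces the coefficient of $\mathrm{d}Y_t$ to lie in $C^{\frac{\alpha}{2}+\beta}$ rather than $C^{\beta}$, which is exactly why $p_0$ and $h$ carry the extra $\alpha/2$) is the right heuristic. But two points in your sketch need more than a gesture. First, your Duhamel formula has $p^u$ on both sides, both in the drift convolution and inside the noise term, so there is no explicit constant-coefficient representation; you need a fixed-point or bootstrap in which the multiplicative term $hp^u$ is controlled through the strictly weaker norm $\|p^u\|_{\frac{\alpha}{2}+\beta,\mathcal{L}^2}$ and then absorbed by interpolation. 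Second, the forward Zakai equation is not a time-reversal of the BSPDE of Theorem~\ref{Theorem genral}: there the coefficient $\sigma$ multiplies the second unknown $v$ and only $C^{\beta}$ regularity of $\sigma$ is used, whereas here $h$ multiplies the solution itself inside the It\^o integral, so ``mirroring'' that proof does not directly yield the estimate you need. If you want a self-contained argument these are the steps to write out in full; otherwise the paper's splitting-plus-citation is the short path.
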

\begin{proof}
    Given $u\in\mathcal{U}_{ad}$, the uniqueness of the solution can be obtained from Lemma~\ref{lemma240713}. %the Theorem 5 in \cite{Mikulevicius2009}. 
    
    Now we consider the unique solution $p^{u,1}$ of the homogeneous PDE as follows
    \begin{align*}
        \mathrm{d} p^{u,1}=\left[-a(t)(-\Delta)^{\frac{\alpha}{2}}p^{u,1} -D(k^u p^{u,1})\right]\mathrm{d} t , \quad\quad p^{u,1}(0,x)=p_0(x). 
    \end{align*}
Then by Corollary \ref{coro deterministic}, we have     \begin{align}\nonumber
    \Vert p^{u,1} (\omega,\cdot) \Vert_{\alpha+\beta, L^2(0,T)}\leq C\,\| p_0(\omega,\cdot) \|_{\frac{\alpha}{2}+\beta}, \quad \mathbb{Q}^*-a.e.
\end{align}

Next we consider $p^{u,2}$, which is the unique solution to the following non-homogeneous SPDE
    \begin{align}\label{eq 1294}
   \mathrm{d} p^{u,2}=\left[-a(t)(-\Delta)^{\frac{\alpha}{2}}p^{u,2}-D(k^u p^{u,2}) \right]\mathrm{d} t +\left(h p^{u,2} +g^u\right)\mathrm{d}Y_t, \quad\quad p^{u,2}(0,x)=0,
    \end{align}
where the coefficient $g^u(t,x)=h(t,x)p^{u,1}(t,x)$ satisfies that
$$
\|g^u\|_{\frac{\alpha}{2}+\beta,\mathcal{L}^2}\leq C \, \|p^{u,1}\|_{\alpha+\beta,\mathcal{L}^2}\leq C \, \|p_0\|_{\frac{\alpha}{2}+\beta,{L}^2}.
$$
In view of Theorem 5 in \cite{Mikulevicius2009}, the unique solution of~\eqref{eq 1294} satisfies
\begin{align*}
    \|p^{u,2}\|_{\alpha+\beta,\mathcal{L}^2}\leq C \|g^u\|_{\frac{\alpha}{2}+\beta,\mathcal{L}^2}\leq C \, \|p_0\|_{\frac{\alpha}{2}+\beta,{L}^2}.
\end{align*}Then $p^u=p^{u,1}+p^{u,2}$ is the unique solution of~\eqref{eq829} such that
$$
\|p^{u}\|_{\alpha+\beta,\mathcal{L}^2}\leq \|p^{u,1}\|_{\alpha+\beta,\mathcal{L}^2}+\|p^{u,2}\|_{\alpha+\beta,\mathcal{L}^2}\leq C \, \|p_0\|_{\frac{\alpha}{2}+\beta,{L}^2}.
$$
\end{proof}

To introduce the adjoint equations for the control problem, for each $t\in[0,T]$ and $u\in\mathcal{U}_{ad}$, we define the linear operator $\mathcal{L}_t(u)$ and its dual operator $\mathcal{L}^*_t(u)$ as
    \begin{align}\label{eq404823}
        \left\{\begin{aligned}
           & \mathcal{L}_t({u})\phi(t,\cdot):=-a(t)(-\Delta)^{\frac{\alpha}{2}}\phi(t,\cdot)-D(k(t,\cdot,{u}_t)\phi(t,\cdot)),  \\
           & \mathcal{L}^*_t({u})\phi(t,\cdot):=-a(t)(-\Delta)^{\frac{\alpha}{2}}\phi(t,\cdot)+Dk(t,\cdot,{u}_t)\cdot\phi(t,\cdot).
        \end{aligned}\right.
    \end{align}

In summary, we get the well-posedness and regularity for the fractional adjoint equations as follows.
\begin{lemma}\label{lemma2407161}
    Let Assumption \ref{density} hold. For each admissible control $u\in\mathcal{U}_{ad}$, the forward-backward stochastic partial differential equation (FBSPDE) 
    \begin{align}\label{eq 861}
    \left\{
    \begin{aligned}
       & \mathrm{d} p^{{u}}(t,x)=\mathcal{L}_t({u})p^{{u}}(t,x)\mathrm{d} t+h(t,x)p^{{u}}(t,x)\mathrm{d} Y_t, \quad\quad p^{{u}}(0,x)=p_0(x),\\
        & \mathrm{d} q^u(t,x)=\left[-\mathcal{L}^*_t({u}) q^u-f(\cdot,{u}_t)-hl^u\right](t,x)\mathrm{d} t+l^u(t,x)\mathrm{d} Y_t,\quad\quad q^u(T,x)=g(x),
    \end{aligned}\right.
    \end{align}
    has the unique solution $(p^{{u}},q^u,l^u)$ such that
    \begin{align*}
    \left\{
    \begin{aligned}
    &p^{{u}} \in C^{{\alpha}+\beta}(\R,\mathcal{L}^2(0,T))\cap \mathcal{L}^2((0,T),H^{\alpha}(\R)),\\
    &q^u \in C^{{\alpha}+\beta}(\R,\mathcal{L}^2(0,T))\cap \mathcal{L}^2((0,T),H^\frac{\alpha}{2}(\R)),\\
    &l^u \in C^{\beta}(\R,\mathcal{L}^2(0,T))\cap \mathcal{L}^2((0,T),L^2(\R)).
    \end{aligned}\right.
    \end{align*}
\end{lemma}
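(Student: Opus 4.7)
The FBSPDE system is decoupled: the forward equation for $p^u$ does not involve $(q^u, l^u)$, and the data of the backward equation depend on $(u_t, g)$ but not on $p^u$. The regularity of $p^u$ is then a direct consequence of the two preceding lemmas, giving $p^u \in \mathcal{L}^2((0,T); H^\alpha(\R)) \cap C^{\alpha+\beta}(\R, \mathcal{L}^2)$. So the substantive work concerns the backward triple $(q^u, l^u)$.

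I would first cast the backward equation in the standard BSPDE form~\eqref{linear-case 2}. Expanding $\mathcal{L}^*_t(u) q^u = -a(t)(-\Delta)^{\alpha/2} q^u + Dk(t,\cdot,u_t)\, q^u$ and rearranging yields
\begin{align*}
-\D q^u(t,x) = \bigl[-a(t)(-\Delta)^{\alpha/2} q^u + Dk(t,x,u_t)\, q^u + h(t,x)\, l^u + f(t,x,u_t)\bigr]\, \D t - l^u(t,x)\, \D Y_t,
\end{align*}
which matches~\eqref{linear-case 2} under the identification $(b, c, \sigma, \text{source}) \mapsto (0,\, Dk(\cdot, u_\cdot),\, h,\, f(\cdot, u_\cdot))$ and $W \mapsto Y$. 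This substitution is legitimate because, under the reference probability $\mathbb{Q}^*$ on which we work, $Y$ is a standard Brownian motion generating $\mathbb{F}^Y$, so Theorem~\ref{Theorem genral} applies with this filtration. To verify Assumptions~\ref{Baisc assumption} and~\ref{general assumption}, the bounds on $a$ come from Assumption~\ref{density}(ii); the uniform-in-$v$ bound $\|k(\cdot,v)\|_{1+\beta, \mathcal{L}^\infty} \leq K_1$ from Assumption~\ref{density}(iv) gives $Dk(\cdot, u_\cdot) \in C^\beta(\R, \mathcal{L}^\infty)$ with norm independent of the admissible control, and $h \in C^{\alpha/2+\beta}(\R, \mathcal{L}^\infty) \subset C^\beta$; finally, Assumption~\ref{density}(vi) supplies the $C^\beta(\R, \mathcal{L}^2)$-norm of $f(\cdot, u_\cdot)$ and the $C^{\alpha/2+\beta}(\R, L^2(\Omega))$-norm of $g$. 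Theorem~\ref{Theorem genral} then yields existence, uniqueness, and the Hölder regularity $q^u \in C^{\alpha+\beta}(\R, \mathcal{L}^2)$, $l^u \in C^\beta(\R, \mathcal{L}^2)$.

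For the remaining Sobolev regularity $q^u \in \mathcal{L}^2((0,T), H^{\alpha/2}(\R))$ and $l^u \in \mathcal{L}^2((0,T), L^2(\R))$, I would run a standard backward energy estimate: apply It\^o's formula to $\|q^u(t)\|_{L^2(\R)}^2$ on $[t,T]$, use the coercivity $\langle (-\Delta)^{\alpha/2} q, q\rangle_{L^2} = \|(-\Delta)^{\alpha/4} q\|_{L^2}^2$ together with $a\geq 1/K_0$, and absorb the cross terms $\langle Dk\,q^u, q^u\rangle$, $\langle h\, l^u, q^u\rangle$, $\langle f(\cdot, u_t), q^u\rangle$ via Cauchy-Schwarz and Young's inequality (a Gronwall argument on $\E\|q^u(t)\|_{L^2}^2$ closes the estimate). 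This produces
\begin{align*}
\E\int_0^T \bigl(\|q^u(t)\|_{H^{\alpha/2}}^2 + \|l^u(t)\|_{L^2}^2\bigr)\, \D t \leq C\Bigl(\E\|g\|_{L^2}^2 + \E\int_0^T \|f(t,\cdot,u_t)\|_{L^2}^2\, \D t\Bigr),
\end{align*}
and the right-hand side is finite by Assumption~\ref{density}(vi).

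\textbf{Main obstacle.} The delicate point is the composition with the random control: $Dk(t,x,u_t)$ and $f(t,x,u_t)$ are random through $u$, and one must ensure that their $C^\beta$-norms remain uniformly bounded and that $\mathbb{F}^Y$-adaptedness is preserved. This is precisely why Assumption~\ref{density}(iv),(vi) is stated uniformly in $v \in U$ rather than only pointwise. Once this is in place, the statement reduces to a clean combination of the two preceding lemmas, Theorem~\ref{Theorem genral}, and the standard backward energy estimate.
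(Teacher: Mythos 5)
Your proposal is correct and follows essentially the same route as the paper: the forward regularity of $p^u$ is taken from the two preceding lemmas, the backward equation is rewritten in the form~\eqref{linear-case 2} (with $b=0$, $c=Dk(\cdot,u_\cdot)$, $\sigma=h$, driving motion $Y$ under $\mathbb{Q}^*$) so that Theorem~\ref{Theorem genral} yields existence, uniqueness and the Hölder bounds, and the Sobolev regularity of $(q^u,l^u)$ comes from the standard backward energy estimate. The paper states these two steps in two lines without detail, so your verification of Assumptions~\ref{Baisc assumption}--\ref{general assumption} via Assumption~\ref{density}(ii),(iv),(vi) and your sketch of the It\^o/Gronwall energy argument simply make explicit what the paper leaves implicit.
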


\begin{proof}
    By Theorem \ref{Theorem genral}, there is an unique solution $({q}^u,{l}^u)\in C^{\alpha+\beta}(\R,\mathcal{L}^2(0,T))\times  C^\beta(\R,{\mathcal{L}^2}(0,T))$ such that
\begin{align*}
    \|{q}^u\|_{\alpha+\beta,\mathcal{L}^2}+\|{l}^u\|_{\beta,\mathcal{L}^2}\leq C\left(\|f^{{u}}\|_{\beta,\mathcal{L}^2}+\|g\|_{\frac{\alpha}{2}+\beta,{L}^2}\right).
\end{align*}
By the regular energy estimate of BSPDE, we obtain
\begin{align*}
    \E\int_0^T \|{q}^u\|^2_{H^{\frac{\alpha}{2}}}\mathrm{d}t
    +\E \sup_{t\in[0,T]} \|{q}^u\|^2_{L^2}
    + \E\int_0^T \|{l}^u\|^2_{L^2}\mathrm{d}t
    \leq C\left(\E\int_0^T \|f^{{u}}\|^2_{L^2}\mathrm{d}t+\|g\|^2_{L^2}\right).
\end{align*}
\end{proof}

\subsection{Maximum principle}
Now we state the main result in this section.  
\begin{theorem}\label{240626thm1}
    Let Assumption \ref{density} hold. If $\bar{u}$ is the optimal control of the problem~\eqref{eq829}-\eqref{eq 824}, then the optimal triple $(\bar{u},p^{\bar{u}},q^{\bar{u}})$ satisfies that $a.e.\, t\in[0,T],\,\mathbb{Q}^*-a.s.$,
   % $$
  %  H_u(t,\bar{u}_t,\bar{p}(t),\bar{q}(t))\cdot (v- \bar{u}_t) \geq 0, \quad\quad \forall\,v\in U,
   % $$
   $$
    H(t,v,\bar{p}(t),\bar{q}(t))\geq H(t,\bar{u}_t,\bar{p}(t),\bar{q}(t)), \quad\quad \forall\,v\in U,
   $$
    where the Hamiltonian $H:[0,T]\times U\times H^1(\R)\times L^2(\R)\rightarrow\R$ is defined as
\begin{align*}
    H(t,v,p,q):=\langle f(t,\cdot,v), p\rangle_{L^2}-\langle D(k(t,\cdot,v)p),q \rangle_{L^2}.
\end{align*}
%for $(t,v,p,q)\in [0,T]\times U\times H^1(\R)\times L^2(\R)$.
\end{theorem}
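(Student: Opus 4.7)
The plan is to apply the classical spike variation method, exploiting Lemma~\ref{lemma2407161} which provides the regularity needed to perform the duality between the linearized Zakai equation and the adjoint BSPDE. Fix $\tau\in[0,T)$, an arbitrary $v\in U$ (later promoted to an $\mathcal{F}^Y_\tau$-measurable $U$-valued random variable), and small $\varepsilon>0$. Define the spiked control $u^\varepsilon_t := v\,\mathbf{1}_{[\tau,\tau+\varepsilon]}(t)+\bar{u}_t\,\mathbf{1}_{[0,T]\setminus[\tau,\tau+\varepsilon]}(t)$, which remains in $\mathcal{U}_{ad}$. Writing $\bar p:=p^{\bar u}$, $p^\varepsilon:=p^{u^\varepsilon}$, the difference $y^\varepsilon:=p^\varepsilon-\bar p$ satisfies the linear SPDE
\begin{align*}
\mathrm{d} y^\varepsilon(t)=\mathcal{L}_t(\bar{u})y^\varepsilon(t)\,\mathrm{d} t+h(t,\cdot)y^\varepsilon(t)\,\mathrm{d} Y_t-D\bigl[(k(t,\cdot,v)-k(t,\cdot,\bar u_t))p^\varepsilon(t,\cdot)\bigr]\mathbf{1}_{[\tau,\tau+\varepsilon]}(t)\,\mathrm{d} t,
\end{align*}
with $y^\varepsilon(0)=0$. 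Using Lemma~\ref{lemma240713} together with the bounds on $k,h,a$ from Assumption~\ref{density}, a Grönwall-type energy estimate yields $\mathbb{E}\sup_{t\le T}\|y^\varepsilon(t)\|_{L^2}^2=O(\varepsilon^2)$ and, more crucially, $\mathbb{E}\|p^\varepsilon(t)-\bar p(t)\|_{L^2}^2\to 0$ as $\varepsilon\to 0$ uniformly in $t$.

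Next I would apply Itô's product formula to the $L^2(\mathbb{R})$-pairing $t\mapsto\langle y^\varepsilon(t),q^{\bar u}(t)\rangle_{L^2}$ using the adjoint equation for $(q^{\bar u},l^{\bar u})$ in~\eqref{eq 861}. The fractional Laplacian contribution cancels by the self-adjointness~\eqref{eq202407101}, the remaining first-order terms in $\mathcal{L}_t(\bar u)$ cancel against those in $\mathcal{L}^*_t(\bar u)$ by integration by parts, and the cross-variation $\langle h y^\varepsilon, l^{\bar u}\rangle_{L^2}\,\mathrm{d} t$ cancels the $-\langle y^\varepsilon, h l^{\bar u}\rangle_{L^2}\,\mathrm{d} t$ term from the $q^{\bar u}$-drift. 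Using the boundary data $y^\varepsilon(0)=0$ and $q^{\bar u}(T)=g$, taking expectation to eliminate the $Y$-martingale yields the duality identity
\begin{align*}
\mathbb{E}\langle g,y^\varepsilon(T)\rangle_{L^2}+\mathbb{E}\int_0^T\!\langle f(t,\cdot,\bar u_t),y^\varepsilon(t)\rangle_{L^2}\mathrm{d} t=-\mathbb{E}\int_\tau^{\tau+\varepsilon}\!\langle D[(k(t,\cdot,v)-k(t,\cdot,\bar u_t))p^\varepsilon(t)],\,q^{\bar u}(t)\rangle_{L^2}\,\mathrm{d} t.
\end{align*}

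On the other hand, a direct expansion of $J(u^\varepsilon)-J(\bar u)$ from~\eqref{eq 824}, splitting the running-cost integrand as $f(t,\cdot,u^\varepsilon_t)p^\varepsilon - f(t,\cdot,\bar u_t)\bar p=[f(t,\cdot,v)-f(t,\cdot,\bar u_t)]p^\varepsilon\mathbf{1}_{[\tau,\tau+\varepsilon]}+f(t,\cdot,\bar u_t)y^\varepsilon$, and substituting the duality identity, collapses everything into Hamiltonian form:
\begin{align*}
J(u^\varepsilon)-J(\bar u)=\mathbb{E}\int_\tau^{\tau+\varepsilon}\!\bigl[H(t,v,p^\varepsilon(t),q^{\bar u}(t))-H(t,\bar u_t,p^\varepsilon(t),q^{\bar u}(t))\bigr]\,\mathrm{d} t.
\end{align*}
Since $\bar u$ is optimal, this quantity is nonnegative; dividing by $\varepsilon$, using $p^\varepsilon\to\bar p$ in $L^2$, the continuity of $f(t,x,\cdot)$ and $k(t,x,\cdot)$ from Assumption~\ref{density}\eqref{assumption growth}, and Lebesgue's differentiation theorem gives, for a.e.\ $\tau$ and every deterministic $v\in U$, $\mathbb{E}[H(\tau,v,\bar p(\tau),q^{\bar u}(\tau))-H(\tau,\bar u_\tau,\bar p(\tau),q^{\bar u}(\tau))]\ge 0$. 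Upgrading from deterministic $v$ to arbitrary $\mathcal{F}^Y_\tau$-measurable $v$ via the usual localization $v=v_0\mathbf{1}_A+\bar u_\tau\mathbf{1}_{A^c}$ with $A\in\mathcal{F}^Y_\tau$ produces the pointwise $\mathbb{Q}^*$-a.s.\ conclusion.

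The most delicate step will be the rigorous justification of Itô's product rule for $\langle y^\varepsilon,q^{\bar u}\rangle_{L^2}$: the Hölder-space framework of Theorem~\ref{Theorem genral} does not directly supply enough spatial integrability for the $L^2(\mathbb{R})$-pairing, so one must combine it with the Sobolev regularities $p^{\bar u}\in\mathbb{H}^\alpha(T)$, $q^{\bar u}\in\mathcal{L}^2(0,T;H^{\alpha/2}(\mathbb{R}))$, $l^{\bar u}\in\mathcal{L}^2(0,T;L^2(\mathbb{R}))$ from Lemma~\ref{lemma2407161}, approximate by convolutions $y^\varepsilon\ast\rho_\delta$, $q^{\bar u}\ast\rho_\delta$, and pass $\delta\to 0$ using these estimates together with the self-adjointness~\eqref{eq202407101} of the fractional Laplacian.
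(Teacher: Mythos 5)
Your proposal follows essentially the same route as the paper: a spike (needle) variation of $\bar u$ on $[\tau,\tau+\varepsilon]$, an energy estimate for $y^\varepsilon=p^{u^\varepsilon}-\bar p$, duality between the linearized Zakai equation and the adjoint BSPDE via It\^o's formula for $\langle y^\varepsilon,\bar q\rangle_{L^2}$, and then division by $\varepsilon$ together with Lebesgue differentiation. The one point where you deviate, and where a genuine issue arises, is the estimate on $y^\varepsilon$. You claim $\E\sup_t\|y^\varepsilon\|_{L^2}^2=O(\varepsilon^2)$; the standard energy (Gr\"onwall) estimate for this equation only yields the square of the norm bounded by $\int_\tau^{\tau+\varepsilon}\E\|D[(k(v)-k(\bar u))p^{u^\varepsilon}]\|^2\,\D t=O(\varepsilon)$, since the spike forcing is $O(1)$ in magnitude on an interval of length $\varepsilon$, and nothing in Assumption~\ref{density} gives the pointwise-in-time bound on $\E\|Dp^{u^\varepsilon}(s)\|_{L^2}^2$ that a Duhamel/Minkowski argument would need to upgrade this to $O(\varepsilon^2)$. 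More importantly, control in $L^2(\R)$ alone is not enough: in the remainder you must estimate $\langle \bar q,\,D[(k(t,\cdot,v)-k(t,\cdot,\bar u_t))y^\varepsilon]\rangle_{L^2}$, and since Lemma~\ref{lemma2407161} only places $\bar q$ in $H^{\alpha/2}$ (with $\alpha/2<1$ for $\alpha<2$), you cannot integrate the derivative onto $\bar q$; you need $y^\varepsilon$ controlled in $H^{1-\alpha/2}\subset H^{\alpha/2}$. This is exactly why the paper proves $\E\sup_{\bar t\le t\le\bar t+\varepsilon}\|\delta p^\varepsilon\|_{H^{\alpha/2}}^2=O(\varepsilon)$ (via Theorems~2.6 and~2.15 of Kim--Kim) and then closes the remainder with a weighted Young inequality, moving $(I-\Delta)^{-\alpha/4}$ onto the forcing term; the $O(\varepsilon)$ rate in $H^{\alpha/2}$, combined with the absolute continuity of $\int\|\bar q\|^2_{H^{\alpha/2}}$ and $\int\|f\|^2_{L^2}$, suffices for $o(\varepsilon)$. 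Your exact Hamiltonian identity for $J(u^\varepsilon)-J(\bar u)$ with $p^{u^\varepsilon}$ in both slots is a clean reformulation, but the passage from $p^{u^\varepsilon}$ to $\bar p$ runs into the same pairing problem, so you should replace your $L^2$/$O(\varepsilon^2)$ claim with the $H^{\alpha/2}$/$O(\varepsilon)$ estimate.
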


\begin{proof}
    Step 1: Denote the optimal control as $\bar{u}\in\mathcal{U}_{ad}$. For simplicity, we write the corresponding state as $\bar{p}$ instead of $p^{\bar{u}}$. From Lemma \ref{lemma2407161}, it follows that $\bar{p}\in \mathcal{L}^2((0,T),H^{\alpha}(\R))$, hence $\bar{p}(t)\in L^2(\Omega,H^{\alpha}(\R))$ for almost everywhere $t\in[0,T]$. Fix $\bar{t}\in[0,T)$ such that $\bar{p}(\bar{t}\,)\in L^2(\Omega,H^{\alpha}(\R))$ along with $U$-valued, $\mathcal{F}_{\bar{t}}$\,-adapted random variable $v$. For any $\varepsilon\in(0,T-\bar{t}\,)$, define $u^\epsilon$ by
    \begin{align*}
         u^\varepsilon=\left\{
         \begin{aligned}
        &v,\quad t\in[\bar{t},\bar{t}+\varepsilon],\\
        &\bar{u},\quad t\in [0,T]\backslash [\bar{t},\bar{t}+\varepsilon].
\end{aligned}    \right.    
    \end{align*} Define
    $\delta p^\varepsilon:=p^{u^\varepsilon}-\bar{p}$, then $\delta p^\epsilon(t)= 0$ for $t\in[0,\bar{t}\,]$. Recalling definition of operator $\mathcal{L}_t(u)$ in~\eqref{eq404823}, we have
    \begin{align}\label{248231}
        \D\delta p^\varepsilon =\left[\mathcal{L}_t(\bar{u})\delta p^\varepsilon - D\Big(\big(k(u^\epsilon)-k(\bar{u})\big) p^{u^{\varepsilon}}\Big) \right]
        \D t+h \delta p^{\varepsilon} \D Y_t, \quad t\in[\bar{t},\bar{t}+\varepsilon],
    \end{align}
    and    \begin{align*}
        \D\delta p^\varepsilon =\mathcal{L}_t(\bar{u})\delta p^\varepsilon
        \D t+h \delta p^{\varepsilon} \D Y_t, \quad t\in(\bar{t}+\varepsilon,T].
    \end{align*}
    
Next, we claim that 
\begin{equation}\label{eq2407161}
 \E \sup_{\bar{t}\leq t\leq \bar{t}+\varepsilon} \|\delta p^\varepsilon\|_{H^\frac{\alpha}{2}}^2 = O(\varepsilon).
\end{equation}
To see this, apply Theorem 2.6 and Theorem 2.15 in \cite{Kim2012} to equation~\eqref{248231}, then 
\begin{align*}
     \E\sup_{\bar{t}\leq t\leq \bar{t}+\varepsilon} \|\delta p^\varepsilon\|_{H^\frac{\alpha}{2}}^2\leq  C\E\int_{\bar{t}}^{\bar{t}+\varepsilon}\left\|D\Big(\big(k(u^\epsilon)-k(\bar{u})\big)\cdot p^{u^{\varepsilon}}\Big)\right\|^2_{L^2}\D t\leq C\varepsilon.
\end{align*}

Step 2: Now we derive the first-order condition of cost functional. Since $\bar{u}$ is optimal, we have
\begin{align*}
    0\leq J(u^\varepsilon)-J(\bar{u})=& \E
    %\left[
    \int_{\bar{t}}^{\bar{t}+\varepsilon}[\langle f(t,\bar{u}),\delta p^\varepsilon\rangle_{L^2}+\langle f(t,u^{\varepsilon})-f(t,\bar{u}),p^{u^\varepsilon}\rangle_{L^2}]\D t \\
    &+ \E
    %\left[
    \int_{\bar{t}+\varepsilon}^{T}[\langle f(t,\bar{u}),\delta p^\varepsilon\rangle_{L^2}\D t+\E\langle g, \delta p^{\varepsilon}(T)\rangle_{L^2}.
\end{align*}
 Considering the adjoint equation~\eqref{eq 861} with the optimal control $\bar{u}$, for simplicity, we write the corresponding solution as $\bar{q}$. By Lemma~\ref{lemma2407161}, we know that for almost everywhere $t\in[0,T]$, it holds $\langle \bar{q}(t),\delta p^\epsilon(t)\rangle_{L^2}<\infty$, $\mathbb{Q}^*-a.e.$. Using Ito's formula for $\langle\bar{q}, \delta p^\varepsilon \rangle_{L^2}$, we have
 \begin{align*}
     \E
    \int_{\bar{t}}^{T}\langle f(t,\bar{u}),\delta p^\varepsilon\rangle_{L^2}\D t+\E\langle g, \delta p^{\varepsilon}(T)\rangle_{L^2}
   = -\E\int_{\bar{t}}^{\bar{t}+\varepsilon}
  \left\langle \bar{q},D\Big(\big(k(u^\varepsilon)-k(\bar{u})\big)\cdot p^{u^{\varepsilon}}\Big)\right\rangle_{L^2} \D t .
 \end{align*}
 Hence,
 \begin{align}\label{eq561}
    0\leq \E\int_{\bar{t}}^{\bar{t}+\varepsilon}\langle f(t,u^{\varepsilon})-f(t,\bar{u}),\bar{p}\rangle_{L^2}- \left\langle \bar{q},D\Big(\left(k(u^\varepsilon)-k(\bar{u})\right)\cdot \bar{p}\Big)\right\rangle_{L^2}\D t +\delta J,
 \end{align}
 where
 $$
 \delta J=\E\int_{\bar{t}}^{\bar{t}+\varepsilon}\langle f(t,u^{\varepsilon})-f(t,\bar{u}),\delta p^{\epsilon}\rangle_{L^2}- \left\langle \bar{q},D\Big(\left(k(u^\varepsilon)-k(\bar{u})\right)\cdot \delta p^{\epsilon}\Big)\right\rangle_{L^2}\D t  .
 $$
 It follows from Young's inequality that
 \begin{align*}
     |\delta J|
     \leq &\varepsilon^{\frac{1}{2}}\E\int_{\bar{t}}^{\bar{t}+\varepsilon}\|f(t,u^\varepsilon)-f(t,\bar{u})\|^2_{L^2}\D t+ \varepsilon^{-\frac{1}{2}}\E\int_{\bar{t}}^{\bar{t}+\varepsilon}\|\delta p^{\varepsilon}\|_{L^2}\D t\\
     &+\varepsilon^{\frac{1}{2}}\E\int_{\bar{t}}^{\bar{t}+\varepsilon}\|\bar{q}\|_{H^{\frac{\alpha}{2}}}\D t+\varepsilon^{-\frac{1}{2}}\E\int_{\bar{t}}^{\bar{t}+\varepsilon}\left\|(I-\Delta)^{-\frac{\alpha}{4}}D\Big(\left(k(u^\varepsilon)-k(\bar{u})\right)\cdot \delta p^{\epsilon}\Big)\right\|_{L^2}\D t\\
     \leq &\varepsilon^{\frac{1}{2}}\E\int_{\bar{t}}^{\bar{t}+\varepsilon}\|f(t,u^\varepsilon)-f(t,\bar{u})\|^2_{L^2}\D t+\varepsilon^{\frac{1}{2}}\E\int_{\bar{t}}^{\bar{t}+\varepsilon}\|\bar{q}\|_{H^{\frac{\alpha}{2}}}\D t+\varepsilon^{-\frac{1}{2}}\E\int_{\bar{t}}^{\bar{t}+\varepsilon}\|\delta p^{\varepsilon}\|_{H^\frac{\alpha}{2}}\D t.
 \end{align*}
 According to Assumption \ref{density}~\eqref{assumption SPDE}, Lemma \ref{lemma2407161} and \eqref{eq2407161}, we have $\delta J=o(\varepsilon)$ for almost everywhere $\bar{t}\in[0,T]$. Dividing~\eqref{eq561} by $\varepsilon$ and letting $\varepsilon$ tend to 0, we obtain the desired results.
\end{proof}

\section{Proof of the main results}\label{secproof}
\subsection{Fractional BSPDE with space invariant coefficients $a(t)$ and $\sigma(t)$}
We first consider the model BSPDEs that the coefficient $(a,\sigma)$ is independent of space variable $x$ and $(b,c)\equiv 0$, so that we can get an explicit formulation for the solution. To be specific, we make the following assumption in this subsection.

\begin{assumption}\label{Simple assumption}
We assume that $a:[0, T] \rightarrow \R^+$ is a deterministic positive function, and $\sigma:\Omega \times [0, T] \rightarrow \R$ and $f:\Omega \times [0, T]\times \R \rightarrow \R$ as well as $g:\Omega \times \R \rightarrow \R$ are real-valued random functions, and $(b,c)\equiv 0$. 
\end{assumption}

Under the Assumption \ref{Simple assumption}, the BSPDE~\eqref{linear-case 2} can be written as 
\begin{align}\label{linear-case}
    \D u(t,x)=\left[a(t)(-\Delta)^{\frac{\alpha}{2}}u(t,x)-f(t,x)-\sigma(t) v(t,x)\right]\D t+v(t,x)\D W_t,\quad\quad u(T,x)=g(x).
\end{align}

\subsubsection{Fractional heat kernel and explicit formulation of solution}
Next, we introduce the heat kernel generated by fractional Laplacian
\begin{align}\label{eq 166}
   G_{t,s}(x)=\frac{1}{2\pi} \int_{\mathbb{R}}\exp\Big(-i\xi x-|\xi|^{\alpha}\int_s^t a(r)\D r\Big)\D \xi ,
\end{align}
where $x\in\R$ and $0\leq s <t \leq T$. We can check that $G_{t,s}(x)$ is the solution of the following equations
\begin{align}\label{240625eq}
   \left\{\begin{aligned}
      & \partial_t G_{t,s}(x)=-a(t)(-\Delta)^{\frac{\alpha}{2}}G_{t,s}(x),\\
     &  \partial_s G_{t,s}(x)=a(s)(-\Delta)^{\frac{\alpha}{2}}G_{t,s}(x).
   \end{aligned} \right.
\end{align}

For the deterministic PDE, i.e., the coefficients $a,g,f$ are deterministic and $v\equiv 0$, 
\begin{align}\label{linear-case 20240624}
    u_t(t,x)=a(t)(-\Delta)^{\frac{\alpha}{2}}u(t,x)-f(t,x),\quad\quad u(T,x)=g(x),
\end{align}
it is well known that the solution to PDE~\eqref{linear-case 20240624} can be formulated by the kernel~\eqref{eq 166} as follows
\begin{align*}
    u(t,x)=R^T_t g(x)+\int_t^T R_t^\tau f(\tau)(x)\D \tau,
\end{align*}
where semigroup operator $R_s^t$ is defined as 
\begin{align}
\left\{\begin{aligned}
     & R_s^t \phi(x) =\int_{\mathbb{R}} G_{t,s}(x-y)\phi(y)\D y,\\
        &  R_s^t \psi(t)(x) =\int_{\mathbb{R}} G_{t,s}(x-y)\psi(t,y)\D y.
\end{aligned}\right.
\end{align}

In order to give the explicit form of the solution $(u,v)$ to~\eqref{linear-case}, we define 
\begin{equation}\label{240626eq1}
 \widetilde{W}_t:=W_t-\int_0^t \sigma(s)\D s.   
\end{equation}
By Girsanov transformation, we get that $\widetilde{W}$ is a standard Brownian motion under probability space $(\Omega,\mathcal{F},\mathbb{F},\Q)$, where
$$
\frac{\D \Q}{\D \mathbb{P}}= \exp\Big(\int_0^T \sigma(s) \D W_s-\int_0^T\frac{1}{2}|\sigma(s)|^2\D s \Big).
$$
Next we set 
$$
p(t;x):=\E^{\Q}\left[g(x)|\mathcal{F}_t\right],\quad\quad Y(t;s,x):=\E^{\Q}\left[f(s,x)|\mathcal{F}_t\right].
$$
By Assumption \ref{Baisc assumption}, we know that for all  $x$ and $t$, it holds that $g(x)\in L^2(\Omega)$ and $f(t,x)\in L^2(\Omega)$. It yields from the martingale representation theorem that
\begin{align}
\left\{\begin{aligned}
  \label{eq 87}  p(t;x)&=g(x)+\int_t^T\sigma(s)q(s;x)\D s-\int_t^T q(s;x)\D W_s,\\
    Y(t;\tau,x)&=f(\tau,x)+\int_t^\tau \sigma(s)Z(s;\tau,x)\D  s-\int_t^{\tau} Z(s;\tau,x)\D W_s,
    \end{aligned}\right.
\end{align}
where $x\in \R$ and $0\leq t\leq \tau \leq T$.

Now we will use $p, q, Y, Z$ and the semigroup operator generated by the fractional heat kernel to represent $(u,v)$ so that the Hölder estimates can be derived with the help of explicit structure. The improved regularity is a consequence of the better properties of fractional heat kernel $G_{t,s}(x)$, whose estimates are given in the Appendix.
\begin{theorem}\label{Thm1}
Let Assumptions \ref{Baisc assumption} and \ref{Simple assumption} hold. There exists a unique classical solution $(u,v)$ to BSPDE~\eqref{linear-case} with the explicit form
    \begin{align}
    \left\{\begin{aligned}\label{eq108} 
       u(t,x)=R^T_t p(t)(x)+\int_t^T R_t^s Y(t;s)(x)\D s,\\
       v(t,x)=R^T_t q(t)(x)+\int_t^T R_t^s Z(t;s)(x) \D s. 
    \end{aligned}\right.
    \end{align}
Moreover, the solution $(u,v)\in \left( C_{\mathcal{S}^2}^{\beta}\cap C_{\mathcal{L}^2}^{\alpha+\beta} \right)\times C_{\mathcal{L}^2}^{\beta}$ satisfies that
\begin{align}\label{eq161}
    \Vert u\Vert_{\alpha+\beta,\mathcal{L}^2}+ \Vert u\Vert_{\beta,\mathcal{S}^2}+\Vert v\Vert_{\beta,\mathcal{L}^2}\leq C\big(\Vert g\Vert_{\frac{\alpha}{2}+\beta,{L}^2}+\Vert f\Vert_{\beta,\mathcal{L}^2}\big),
\end{align}
where $C=C(\alpha,\beta,K,T,m)$.
\end{theorem}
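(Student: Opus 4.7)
The strategy is to take the explicit pair $(u,v)$ defined by \eqref{eq108} as the candidate solution, verify directly that it satisfies BSPDE~\eqref{linear-case}, and then extract the Hölder bound \eqref{eq161} from the smoothing properties of the fractional heat kernel $G_{t,s}$ collected in the Appendix. Uniqueness is then automatic from the same explicit representation applied to the difference of two solutions, because the homogeneous equation forces $p\equiv q\equiv Y\equiv Z\equiv 0$.

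For the verification step, under $\Q$ both $t\mapsto p(t;x)$ and $t\mapsto Y(t;s,x)$ are $\Q$-martingales, with the representations \eqref{eq 87} relative to $\widetilde W$. For each fixed $x$ I would apply It\^o's formula to $u(t,x)=R_t^T p(t)(x)+\int_t^T R_t^s Y(t;s)(x)\D s$ and exploit three contributions: the $t$-derivative of the kernel produces $-a(t)(-\Delta)^{\alpha/2}u(t,x)\D t$ through \eqref{240625eq}; evaluating the integrand at $s=t$ gives $-R_t^t Y(t;t)(x)\D t=-f(t,x)\D t$; and substituting the $W$-martingale parts of $p,Y$ together with the Girsanov correction \eqref{240626eq1} converts $\D\widetilde W_t$ back to $\D W_t-\sigma(t)\D t$, yielding exactly the $-\sigma(t)v(t,x)\D t+v(t,x)\D W_t$ terms with $v$ equal to the second line of \eqref{eq108}. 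A Fubini argument (legitimate provided the $L^2$ bounds below are in hand) is needed to justify interchanging the spatial convolution with the stochastic integral; this should be standard.

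For the a priori estimate, the starting observation is that conditional expectation is an $L^2(\Omega)$-contraction, so the mappings $g\mapsto p(t;\cdot)$ and $f(s,\cdot)\mapsto Y(t;s,\cdot)$ preserve the Hölder seminorms in $x$, i.e.
\begin{align*}
\|p(t;\cdot)\|_{\tfrac{\alpha}{2}+\beta,L^2}\le \|g\|_{\tfrac{\alpha}{2}+\beta,L^2}, \qquad \|Y(t;s,\cdot)\|_{\beta,L^2}\le\|f(s,\cdot)\|_{\beta,L^2},
\end{align*}
with analogous bounds for $q,Z$ via the Burkholder–Davis–Gundy inequality applied to \eqref{eq 87}. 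Combining these with the fractional semigroup estimate of the form $[R_t^s\phi]_{\gamma+\beta,L^2}\le C(s-t)^{-\gamma/\alpha}[\phi]_{\beta,L^2}$ (with a companion estimate for $\gamma=0$), I would bound $\|u\|_{\alpha+\beta,\mathcal{L}^2}$ by applying the smoothing with $\gamma=\alpha/2$ to the $p$-term (where $g$ already supplies $\alpha/2+\beta$ regularity) and by distributing the needed $\alpha$ derivatives onto $R_t^s$ in the $Y$-integral. The $\|v\|_{\beta,\mathcal{L}^2}$ bound needs no extra smoothing and follows directly from the preservation of Hölder seminorms together with a $\mathcal{L}^2$-contractivity of $R_t^s$. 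Finally $\|u\|_{\beta,\mathcal{S}^2}$ is obtained by pairing Doob's maximal inequality for $p,Y$ with the same kernel bound.

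The main obstacle will be controlling the $Y$-integral in the $\alpha+\beta$ norm: the naive semigroup estimate at the full smoothing order $\gamma=\alpha$ produces a non-integrable singularity in $s-t$, so one must interpolate between pointwise kernel bounds and higher-order seminorm bounds and treat separately the two regimes $|x-y|^{\alpha}\le s-t$ and $|x-y|^{\alpha}>s-t$. This is the familiar parabolic Hölder splitting, but here extra care is required to ensure that only $\beta\in(0,1)$ regularity of $f$ is used and that the resulting constant depends only on $(\alpha,\beta,K,T,m)$. Once this estimate is in place, uniqueness is immediate: the difference of any two classical solutions solves the homogeneous BSPDE, and \eqref{eq108} forces that difference to vanish.
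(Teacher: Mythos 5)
Your overall strategy --- representing $(u,v)$ through the fractional heat kernel and the martingale representations \eqref{eq 87}, verifying the equation by It\^o's formula, and extracting the $\alpha+\beta$ estimate from kernel smoothing plus a near/far splitting of the convolution --- is essentially the paper's. Your identification of the critical difficulty (the non-integrable singularity when the full smoothing order $\gamma=\alpha$ is dumped on the kernel in the $Y$-integral, resolved by splitting at $|x-y|\sim$ a multiple of $|x-\bar x|$) is exactly where the paper's decompositions $I_1,\dots,I_4$ and $J_1,\dots,J_4$ live. However, your uniqueness argument has a genuine gap as written. Verifying that the explicit pair \eqref{eq108} \emph{is} a solution does not show that \emph{every} classical solution has that form; to conclude that the difference of two solutions vanishes you need the converse representation --- that any classical solution of the homogeneous equation equals \eqref{eq108} with $g=f=0$ --- and this requires a separate It\^o computation applied to $s\mapsto G_{s,t}(x-y)u(s,y)$ for an arbitrary given solution $u$, followed by a conditional-expectation/stochastic-Fubini argument (the paper's Lemma~\ref{uniqueness}). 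This is not the forward verification you describe, and without it the assertion that ``the homogeneous equation forces the difference to vanish'' is circular.

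On the estimates, your route differs from the paper's for the lower-order norms. You propose to read $\|v\|_{\beta,\mathcal{L}^2}$ and $\|u\|_{\beta,\mathcal{S}^2}$ directly off the explicit formula (contractivity of $R_t^s$, Doob/BDG for $q$ and $Z$), whereas the paper derives them from the equation itself: It\^o's formula applied to $u^2$, the bound $\|(-\Delta)^{\frac{\alpha}{2}}u\|_{\beta,\mathcal{L}^2}\leq C\|u\|_{\alpha+\beta,\mathcal{L}^2}$ from Lemma~\ref{delta Hölder}, and a backward Gronwall argument. Your direct route is workable given the regularity of $(q,Z)$ supplied by Lemma~\ref{pqYZ regular}, but you should say explicitly how the $Z$-integral is controlled (Cauchy--Schwarz against the kernel together with the bound on $\E\int_0^T\int_0^r|Z(t;r,x)-Z(t;r,\bar{x})|^2|x-\bar{x}|^{-2\beta}\,\D t\,\D r$); the paper's energy route has the advantage of never estimating the flow term $Z$ at this stage and of reusing verbatim in the variable-coefficient case. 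Also note that your claimed seminorm preservation for $p$ should be stated at order $\tfrac{\alpha}{2}+\beta$ (as in Lemma~\ref{pqYZ regular}), since the $J$-terms in the $\alpha+\beta$ estimate consume one derivative of $p$ before the splitting is applied.
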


\begin{remark}\label{remark664}
Alternatively, using Fourier transformation and the result of linear BSDE, we can also directly get the explicit formulation of the solution $(u,v)$ to BSPDE~\eqref{linear-case}. To see this, we take Fourier transformation with respect to $x$ on both sides of equation~\eqref{linear-case},
    \begin{align}\label{eq2409021}
    \D \hat{u}(t,\xi)=\left[a(t)|\xi|^{\alpha}\hat{u}(t,\xi)-\hat{f}(t,\xi)-\sigma(t) \hat{v}(t,\xi)\right]\D t+\hat{v}(t,\xi)\D W_t,\quad\quad \hat{u}(T,\xi)=\hat{g}(\xi),
\end{align}
where 
$$
\hat{u}(t,\xi)=\mathcal{F}(u(t,\cdot))(\xi),\quad \hat{v}(t,\xi)=\mathcal{F}(v(t,\cdot))(\xi),\quad \hat{f}(t,\xi)=\mathcal{F}(u(t,\cdot))(\xi),\quad \hat{g}(\xi)=\mathcal{F}(g)(\xi).
$$
For any fixed $\xi\in \R$, equation~\eqref{eq2409021} is actually a linear BSDE. By the standard result of linear BSDE, we obtain that under some regularity condition, the unique solution to~\eqref{eq2409021} satisfies
\begin{align}\label{eq2409061}
    \hat{u}(t,\xi)=\E\left[\Gamma_{T,t}(\xi)\hat{g}(\xi)+\int_t^T\Gamma_{s,t}(\xi)\hat{f}(s,\xi)\D s \bigg| \mathcal{F}_t\right],
\end{align}
with 
$$
\Gamma_{s,t}(\xi)=\exp\left(\int_t^s \sigma(r)\D W_r+\int_t^s\big[-a(r)|\xi|^{\alpha}-\frac{1}{2}|\sigma(r)|^2\,\big]\D r\right),
$$
which easily leads to the existence and uniqueness of BSPDE within the Sobolev framework (see Appendix~\ref{Appendix A} for more details). However, The explicit formula~\eqref{eq2409061} seems to be still far from our purpose of addressing the well-posedness in Hölder space even when $a$ is deterministic. Thus, we prefer to derive the Hölder solutions via the fractional heat kernel in the article.
\end{remark}

\subsubsection{Existence and uniqueness of solution}
The proof of Theorem \ref{Thm1} is lengthy and we split it into several parts. First, we show that every solution of~\eqref{linear-case} has the explicit formulation~\eqref{eq108}. Thus, we immediately get the uniqueness of the solution of~\eqref{linear-case}. The proof is similar to Theorem 3.3 in \cite{tang2016cauchy} , so we leave it to the Appendix.
\begin{lemma}\label{uniqueness}
    Let Assumptions \ref{Baisc assumption} and \ref{Simple assumption} hold. If $(u,v)$ is a classical solution of~\eqref{linear-case}, then it can be formulated as~\eqref{eq108}.
\end{lemma}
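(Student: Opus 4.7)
The plan is to move first to the reference probability $\Q$ from~\eqref{240626eq1}, under which $\widetilde W$ is a standard Brownian motion. The Girsanov drift absorbs the $\sigma v$ term, so the BSPDE reduces to
$$
\D u(t,x) = \bigl[a(t)(-\Delta)^{\frac{\alpha}{2}}u(t,x) - f(t,x)\bigr]\D t + v(t,x)\D\widetilde W_t,\qquad u(T,x)=g(x).
$$
This decoupling of drift and diffusion is the crucial simplification: once the fractional heat semigroup is brought in, $u(t,x)$ can be read off as a $\Q$-conditional expectation.

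For the $u$-formula I would fix $(t,x)\in[0,T)\times\R$ and apply an It\^o--Wentzell type formula to the process $F(s) := R_t^s u(s,\cdot)(x) = \int_\R G_{s,t}(x-y) u(s,y)\D y$ on $s\in[t,T]$. Three ingredients enter the computation: the kernel identity $\partial_s G_{s,t}(x) = -a(s)(-\Delta)^{\frac{\alpha}{2}} G_{s,t}(x)$ coming from~\eqref{240625eq} (derivative in the first index), the $\Q$-dynamics of $u(s,y)$ just displayed, and the duality~\eqref{eq202407101} that moves $(-\Delta)^{\frac{\alpha}{2}}$ between $G_{s,t}(x-y)$ and $u(s,y)$. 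The two fractional-Laplacian contributions cancel, leaving
$$
\D F(s) = -R_t^s f(s,\cdot)(x)\D s + R_t^s v(s,\cdot)(x)\D\widetilde W_s.
$$
Integrating over $[t,T]$ with $F(T)=R_t^T g(x)$ and $F(t)=u(t,x)$ yields
$$
u(t,x) = R_t^T g(x) + \int_t^T R_t^s f(s,\cdot)(x)\D s - \int_t^T R_t^s v(s,\cdot)(x)\D\widetilde W_s.
$$
Taking $\E^{\Q}[\,\cdot\,\mid \mathcal{F}_t]$ annihilates the last (martingale) integral, and a stochastic Fubini argument pushes the conditional expectation inside the spatial integrals defining $R_t^T$ and $R_t^s$. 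Since $p(t;y)=\E^{\Q}[g(y)\mid\mathcal{F}_t]$ and $Y(t;s,y)=\E^{\Q}[f(s,y)\mid\mathcal{F}_t]$, the first line of~\eqref{eq108} follows.

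For $v$, I would then differentiate the just-derived $u$-formula back in $t$. The term $R_t^T p(t)(x)$ is handled via $\partial_t G_{T,t}=a(t)(-\Delta)^{\frac{\alpha}{2}}G_{T,t}$ (derivative in the second index) together with the $\Q$-martingale representation $\D p(t,x)=q(t,x)\D\widetilde W_t$. The integral term $\int_t^T R_t^s Y(t;s)(x)\D s$ is handled by Leibniz's rule, producing a boundary contribution $-R_t^t Y(t;t)(x) = -f(t,x)$ plus, for each fixed $s\in(t,T]$, a term computed analogously from $\D Y(t;s,x)=Z(t;s,x)\D\widetilde W_t$. Summing, the drift reproduces $a(t)(-\Delta)^{\frac{\alpha}{2}}u(t,x)-f(t,x)$ and the diffusion coefficient is $R_t^T q(t)(x)+\int_t^T R_t^s Z(t;s)(x)\D s$. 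Comparing with the $\Q$-dynamics of $u$ and invoking uniqueness of the It\^o integrand identifies $v$ with this expression, giving the second line of~\eqref{eq108}.

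The main obstacle I anticipate is making these It\^o/Fubini manipulations rigorous for a merely classical solution: one must justify the transfer of the nonlocal operator $(-\Delta)^{\frac{\alpha}{2}}$ between $u(s,\cdot)$ and $G_{s,t}$, the stochastic Fubini interchanging $\E^{\Q}[\,\cdot\,\mid \mathcal{F}_t]$ with the spatial integrals, and the fact that $\int_t^T R_t^s v(s,\cdot)(x)\D\widetilde W_s$ is a genuine $\Q$-martingale so that its conditional expectation vanishes. Each of these rests on integrability bounds obtained from the Gaussian-type decay estimates of $G_{s,t}$ collected in the Appendix, combined with the boundedness hypotheses on $(a,\sigma,f,g)$ in Assumptions~\ref{Baisc assumption} and~\ref{Simple assumption}.
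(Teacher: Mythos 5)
Your derivation of the $u$-representation coincides with the paper's: both apply It\^o's formula to $G_{s,t}(x-y)u(s,y)$, use the kernel equation and the duality \eqref{eq202407101} to cancel the two fractional-Laplacian contributions, absorb the $\sigma v$ drift into $\widetilde W$ via \eqref{240626eq1}, and arrive at \eqref{eq 817}; the paper then substitutes the BSDE representations \eqref{eq 87} and takes conditional expectation, which is equivalent to your step of applying $\E^{\Q}[\,\cdot\mid\mathcal F_t]$ directly and using the definitions of $p$ and $Y$ as conditional expectations. Where you genuinely diverge is in identifying $v$: the paper keeps the stochastic integrals explicit, rearranges them by the semigroup property and stochastic Fubini into $\int_t^T R_t^s\bigl(v(s)-R_s^T q(s)-\int_s^T R_s^r Z(s;r)\,\D r\bigr)(x)\,\D\widetilde W_s=0$, and concludes the integrand vanishes by varying $(t,x)$ (via the quadratic variation and the approximate-identity behaviour of $G_{t,s}$ as $s\downarrow t$); you instead differentiate the derived $u$-formula forward in $t$ and match martingale parts by uniqueness of the semimartingale decomposition. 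Your route is valid but essentially re-proves the forward It\^o expansion of the explicit formula, which is the content of the existence argument (Lemma \ref{existence}) and requires the regularity of $(p,q,Y,Z)$ from Lemma \ref{pqYZ regular} together with the kernel estimates to justify the Leibniz/It\^o--Wentzell manipulations and the boundary identification $R_t^tY(t;t)(x)=f(t,x)$; the paper's route avoids that second pass at the cost of the (also nontrivial, and rather tersely stated) step of deducing pointwise vanishing of the integrand from the vanishing of the stochastic integral for all $(t,x)$. Either way, the limits as the kernel degenerates at the diagonal (the analogue of \eqref{lim rt1'}) are the technical crux, and you correctly flag them; they are exactly what the paper's appendix verifies using the decay estimates on $G_{t,s}$.
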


In view of the classical results for the regularity of BSDEs, we can get the regularity of the stochastic flows $(p,q)$ and $(Y, Z)$ as follows.

\begin{lemma}\label{pqYZ regular}
Let Assumptions \ref{Baisc assumption} and \ref{general assumption} hold. We have
\begin{align*}
\Vert p\Vert_{\frac{\alpha}{2}+\beta,\mathcal{S}^2}+ \Vert q\Vert_{\frac{\alpha}{2}+\beta,\mathcal{L}^2}\leq C\Vert g\Vert_{\frac{\alpha}{2}+\beta,L^2},
\end{align*}
and
\begin{align*}
 &\sup_x \E\int_0^T\sup_{t\leq r}|Y(t;r,x)|^2\D r+\sup_{x\neq \bar{x}} \E\int_0^T\sup_{t\leq r}\frac{|Y(t;r,x)-Y(t;r,\bar{x})|^2}{|x-\bar{x}|^{2\beta}}\D r\\
            + &\sup_x \E\int_0^T\int_0^r|Z(t;r,x)|^2\D t\D r+\sup_{x\neq \bar{x}} \E\int_0^T \int_0^r \frac{|Z(t;r,x)-Z(t;r,\bar{x})|^2}{|x-\bar{x}|^{2\beta}}\D t\D r\leq C\Vert f\Vert_{\beta,\mathcal{L}^2}^2.
\end{align*}

\end{lemma}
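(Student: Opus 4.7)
The plan is to recognize that, for each fixed $x$, $(p(\cdot;x),q(\cdot;x))$ and $(Y(\cdot;r,x),Z(\cdot;r,x))$ solve \emph{linear} BSDEs whose driver $-\sigma$ is bounded and independent of $x$, so that $x$ enters only through the $L^2(\Omega)$-valued terminal datum $g(x)$ or parameter $f(r,x)$. The solution map sending terminal data in $L^2(\Omega,\mathcal{F}_T)$ to BSDE solution in $\mathcal{S}^2\times\mathcal{L}^2$ is bounded linear by the classical $L^2$ BSDE a priori estimate, with constant depending only on $K$ and $T$; composing it with the Banach-valued map $x\mapsto g(x)$ (resp. $x\mapsto f(r,x)$) transfers Hölder regularity in $x$ from data to solution.

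For $(p,q)$: since $\beta>2-\alpha$, the exponent $\alpha/2+\beta$ lies in $(1,1+\alpha/2)\subset(1,2)$, so the integer part equals $1$ and $\|g\|_{\alpha/2+\beta,L^2}$ controls $\E|g(x)|^2$, $\E|Dg(x)|^2$ and the $\beta'$-Hölder seminorms in $L^2(\Omega)$ of $g$ and $Dg$, where $\beta':=\alpha/2+\beta-1$. Applying the BSDE estimate to the first line of \eqref{eq 87} yields
\begin{align*}
\E\sup_{t\in[0,T]}|p(t;x)|^2+\E\int_0^T|q(s;x)|^2\,\D s\leq C\,\E|g(x)|^2,
\end{align*}
and by linearity the same inequality holds for $(p(\cdot;x)-p(\cdot;\bar x),q(\cdot;x)-q(\cdot;\bar x))$ with terminal $g(x)-g(\bar x)$. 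To handle the first-derivative piece I identify $(Dp,Dq)$ as the BSDE solution driven by terminal $Dg(x)$: the $L^2(\Omega)$ difference quotients $(g(x+h)-g(x))/h$ converge to $Dg(x)$, and continuous dependence of linear BSDEs on their terminal then forces the quotients $(p(\cdot;x+h)-p(\cdot;x))/h,\,(q(\cdot;x+h)-q(\cdot;x))/h$ to converge in $\mathcal{S}^2\times\mathcal{L}^2$ to the BSDE solution with terminal $Dg(x)$. Reapplying the same estimate to $(Dp,Dq)$ and to its spatial differences gives $\E\sup_t|Dp(t;x)|^2$, $\E\int_0^T|Dq|^2\D s$ and their $\beta'$-seminorms bounded by $\E|Dg(x)|^2$ and $\E|Dg(x)-Dg(\bar x)|^2$; summing the four contributions reproduces the claim.

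For $(Y,Z)$: no $x$-derivative is required, since only the $\beta$-Hölder seminorm appears. Fix $r\in[0,T]$, $x\in\R$; the second line of \eqref{eq 87} is a linear BSDE on $[0,r]$ with bounded driver and terminal $f(r,x)$, so
\begin{align*}
\E\sup_{t\leq r}|Y(t;r,x)|^2+\E\int_0^r|Z(t;r,x)|^2\,\D t\leq C\,\E|f(r,x)|^2.
\end{align*}
Integrating in $r$ over $[0,T]$ and taking $\sup_x$ bounds the first and third terms of the claim by $C\|f\|_{0,\mathcal{L}^2}^2\leq C\|f\|_{\beta,\mathcal{L}^2}^2$. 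For the Hölder terms, apply the same inequality to the differences, whose terminal is $f(r,x)-f(r,\bar x)$, then integrate in $r$, divide by $|x-\bar x|^{2\beta}$ and take $\sup_{x\neq\bar x}$ to recognize the right-hand side as $C[f]_{\beta,\mathcal{L}^2}^2$.

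The only delicate step is the identification of $(Dp,Dq)$ with the BSDE driven by $Dg(x)$, i.e.\ the interchange of the $x$-derivative with the conditional expectation and stochastic integral; this will be justified by a dominated-convergence argument whose uniform $L^2$ control is provided by $\|g\|_{1,L^2}<\infty$. Everything else reduces to a mechanical application of the standard linear BSDE $L^2$ a priori estimate.
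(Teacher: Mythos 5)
Your proposal is correct and is essentially the argument the paper has in mind: the paper gives no detailed proof of this lemma, simply invoking ``classical results for the regularity of BSDEs,'' and your proof supplies exactly those details — the standard linear $L^2$ a priori estimate applied to $(p,q)$, to spatial differences, and to the difference-quotient identification of $(Dp,Dq)$ as the BSDE driven by $Dg$, plus the same estimate integrated in $r$ for $(Y,Z)$. The one point worth writing out carefully is the $L^2(\Omega)$-convergence of the difference quotients of $g$, which follows from $[Dg]_{\alpha/2+\beta-1,L^2}<\infty$ via Minkowski's integral inequality, as you indicate.
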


With the regularity of $(p,q)$ and $(Y, Z)$, we can directly use the explicit formula~\eqref{eq108} to check the existence of BSPDEs~\eqref{linear-case} as follows.
\begin{lemma}\label{existence}
   Let Assumptions \ref{Baisc assumption} and \ref{Simple assumption} hold. Then the pair $(u,v)$ defined in~\eqref{eq108} is a solution to BSPDE~\eqref{linear-case}.
\end{lemma}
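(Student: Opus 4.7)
The plan is to verify directly that the candidate pair $(u,v)$ defined by~\eqref{eq108} satisfies the terminal condition and, after differentiation in $t$, yields the drift and diffusion prescribed by~\eqref{linear-case}. The terminal condition is immediate: since $G_{T,T}$ acts as the identity, $R_T^T p(T)(x)=p(T;x)=g(x)$, while the integral term vanishes at $t=T$, giving $u(T,x)=g(x)$. The bulk of the work is an It\^o--Fubini computation on the two pieces of~\eqref{eq108}.

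First I would treat the term $R_t^T p(t)(x)=\int_\R G_{T,t}(x-y)p(t;y)\D y$. Two time-dependences contribute. From~\eqref{240625eq}, $\partial_t G_{T,t}(x-y)=a(t)(-\Delta)^{\alpha/2}G_{T,t}(x-y)$. From~\eqref{eq 87}, in differential form $\D p(t;y)=-\sigma(t)q(t;y)\D t+q(t;y)\D W_t$. Applying It\^o's formula and a stochastic Fubini theorem to exchange the spatial integration with the stochastic integration, one obtains
\begin{align*}
\D\bigl[R_t^T p(t)\bigr](x)
=a(t)(-\Delta)^{\alpha/2}\bigl[R_t^T p(t)\bigr](x)\D t
-\sigma(t)\bigl[R_t^T q(t)\bigr](x)\D t
+\bigl[R_t^T q(t)\bigr](x)\D W_t.
\end{align*}

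Next I would treat $H(t,x):=\int_t^T R_t^s Y(t;s)(x)\D s$. The lower limit contributes $-R_t^t Y(t;t)(x)\D t=-f(t,x)\D t$, using $G_{t,t}=\delta_0$ and $Y(t;t,y)=f(t,y)$. For fixed $s>t$, the integrand $R_t^s Y(t;s)(x)=\int_\R G_{s,t}(x-y)Y(t;s,y)\D y$ depends on $t$ through the kernel (with $\partial_t G_{s,t}=a(t)(-\Delta)^{\alpha/2}G_{s,t}$ by~\eqref{240625eq}) and through $Y$ (with $\D_t Y(t;s,y)=-\sigma(t)Z(t;s,y)\D t+Z(t;s,y)\D W_t$ from~\eqref{eq 87}). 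Combining and swapping $\int_t^T\D s$ with $\int\D W_t$ via stochastic Fubini, I get
\begin{align*}
\D H(t,x)
=\Bigl[-f(t,x)+a(t)(-\Delta)^{\alpha/2}H(t,x)-\sigma(t)\int_t^T R_t^s Z(t;s)(x)\D s\Bigr]\D t
+\int_t^T R_t^s Z(t;s)(x)\D s\,\D W_t.
\end{align*}
Adding the two displays and recognising $u$ and $v$ on the right-hand side yields exactly~\eqref{linear-case}.

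The main obstacle is rigorously justifying the interchanges of order (spatial integration with stochastic/Bochner integration, and differentiation in $t$ under the integral in $s$) together with the application of $(-\Delta)^{\alpha/2}$ inside the integrals. For this I would rely on the kernel estimates recorded in the Appendix (decay and derivative bounds for $G_{s,t}$), on the regularity of $(p,q)$ and $(Y,Z)$ from Lemma~\ref{pqYZ regular}, and on a standard stochastic Fubini theorem; these ensure the integrands lie in the appropriate $L^2(\Omega\times[0,T])$ spaces and that $(-\Delta)^{\alpha/2}$ may be pulled through the convolution with $G_{T,t}$ and $G_{s,t}$. A brief check that the It\^o correction terms vanish (the kernels are deterministic and $p,Y$ have no cross-variation with themselves that would contribute to the $x$-fibre) completes the argument. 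Uniqueness is already supplied by Lemma~\ref{uniqueness}, so together with this verification we obtain existence of the classical solution in the form~\eqref{eq108}.
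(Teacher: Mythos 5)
Your overall strategy is the same as the paper's: combine the kernel equations~\eqref{240625eq}, the martingale representations~\eqref{eq 87}, It\^o's formula and a stochastic Fubini argument to show that~\eqref{eq108} satisfies~\eqref{linear-case}; the paper merely organises the computation in integrated form (It\^o applied to $G_{T,s}(x-y)p(s;y)$ and to $R_u^rY(u;r)(x)$ in the forward variable, then pushed to the boundary) rather than as a differential in $t$. The substantive problem is that you dispose of exactly the part that carries the weight of the paper's proof. Writing ``$G_{T,T}$ acts as the identity, so $R_T^Tp(T)(x)=g(x)$'' is not an argument: the kernel $G_{T,t}$ degenerates to a Dirac mass as $t\to T$, and the terminal condition is the nontrivial limit~\eqref{lim rt1}, which the paper proves by splitting $p(r;x-A_{T,r}^{1/\alpha}z)-p(T;x)$ into a spatial increment (controlled by the H\"older regularity of $p$ from Lemma~\ref{pqYZ regular}) and a temporal increment, and invoking dominated convergence against $G(z)$. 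The same degeneracy affects the two integrals accompanying the $p$-part: $(-\Delta)^{\frac{\alpha}{2}}R_s^Tp(s)(x)$ blows up like $(T-s)^{\frac{\gamma}{\alpha}-1}$ as $s\to T$, so the convergence of $\int_t^T a(s)(-\Delta)^{\frac{\alpha}{2}}R_s^Tp(s)(x)\D s$ and of the stochastic integral $\int_t^TR_s^Tq(s)(x)\D\widetilde{W}_s$ up to $s=T$ must be established (the paper's~\eqref{lim rt2} and~\eqref{lim rt3}, via Lemma~\ref{all G estimate}); your differential identity for $R_t^Tp(t)(x)$ is only a local statement for $t<T$ and does not by itself yield the backward integral equation~\eqref{eq 226} with terminal datum $g$.

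The same issue recurs, on the diagonal rather than at $T$, in your treatment of $H(t,x)=\int_t^TR_t^sY(t;s)(x)\D s$: the claimed lower-limit contribution $-R_t^tY(t;t)(x)\D t=-f(t,x)\D t$ ``using $G_{t,t}=\delta_0$'' is precisely the limit the paper proves through the truncation $r_t(\varepsilon)=(r-\varepsilon)\vee t$ and the estimates~\eqref{lim rt4}--\eqref{lim rt6}. None of your steps is false, and your identification of stochastic Fubini, the kernel bounds, and Lemma~\ref{pqYZ regular} as the relevant tools is correct; but as written the proposal treats as ``immediate'' the boundary limits that constitute essentially all of the paper's proof, so it is a plan with the hard analysis still to be done rather than a complete argument.
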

\begin{proof}
By the property~\eqref{240625eq} of fractional heat kernel and the definition~\eqref{240626eq1} of Brownian motion $\widetilde{W}$ under the probability $\mathbb{Q}$, we get for $s\in[0,T)$,
    \begin{align*}
     \D G_{T,s}(x-y)p(s;y)=a(s)(-\Delta)^{\frac{\alpha}{2}}G_{T,s}(x-y)p(s;y)\D s + G_{T,s}(x-y)q(s;y)\D \widetilde{W}_s.
    \end{align*}
For $r\in [t,T)$, integrating with respect to $s$ from $t$ to $r$ and integrating with respect to $y$ on $\mathbb{R}$, we get
   \begin{align}\label{eq124}
      R_r^T p(r)(x)= R_t^T p(t)(x)+\int_t^r a(s)(-\Delta)^{\frac{\alpha}{2}}R_s^Tp(s)(x)\D s + \int_t^r R_s^T q(s)(x)\D \widetilde{W}_s.
    \end{align}
    
Next, we let $r$ tends to $T$ and aim to show for all $t\in[0,T]$
 \begin{align}\label{eq 226}
      g(x)= R_t^T p(t)(x)+\int_t^T a(s)(-\Delta)^{\frac{\alpha}{2}}R_s^Tp(s)(x)\D s + \int_t^T R_s^T q(s)(x)\D \widetilde{W}_s.
 \end{align}
By the boundedness of $a$, it suffices to prove that equation~\eqref{eq 226} is continuous to the boundary, that is,
  \begin{align}
   \label{lim rt1}     &\lim_{r\rightarrow T} R^T_r p(r)(x)= g(x),\\
   \label{lim rt2}       & \lim_{r\rightarrow T} \int_r^T (-\Delta)^{\frac{\alpha}{2}}R_s^Tp(s)(x)\D s=0, \\
    \label{lim rt3}      &\lim_{r\rightarrow T} \int_r^T R_s^T q(s)(x)\D \widetilde{W}_s =0.
    \end{align}
for all $x\in\R$, $\mathbb{P}$-a.s.

First, we prove~\eqref{lim rt1}. Notice
\begin{align*}
  &\E\left[|R_r^Tp(r)(x)-g(x)|^2\right]=\E\left[\Big|\int_{\R} G_{T,r}(x-y)(p(r;y)-p(T;x)) \D y\Big|^2\right] \\
  \leq& \E\int_{\R}G_{T,r}(x-y)(p(r;y)-p(T;x))^2\D y = \int_{\R} G(z)\E\left[\left|p(r;x-A_{T,r}^{\frac{1}{\alpha}}z)-p(T;x)\right|^2\right]\D z\\
  \leq& 2\int_{\R} G(z)\E\left[\left|p(r;x-A_{T,r}^{\frac{1}{\alpha}}z)-p(r;x)\right|^2\right]\D z+2 \int_{\R} G(z)\E\left[\left|p(r;x)-p(T;x)\right|^2\right]\D z,
\end{align*}
where $G$ and  $A_{s,t}$ are the functions defined in~\eqref{defG} and~\eqref{defA} respectively, and the first inequality is due to Lemma~\ref{semi-group}~\eqref{int is 1} for the fractional heat kernel $G_{T,r}$, and the second equality comes from Lemma \ref{semi-group}~\eqref{variable change} when $k=0$. 
By Lemma \ref{pqYZ regular}, we have
\begin{align*}
   \lim_{r\rightarrow T} \E\left[\left|p(r;x-A_{T,r}^{\frac{1}{\alpha}}z)-p(r;x)\right|^2\right]\leq C \lim_{r\rightarrow T}A_{T,r}^{\frac{2\beta}{\alpha}}\,z^{2\beta }=0.
\end{align*}
It follows from the dominated convergence theorem that
\begin{align*}
  \lim_{r\rightarrow T}\int_{\R} G(z)\E\left[\left|p(r,x-A_{T,r}^{\frac{1}{\alpha}}\,z)-p(r,x)\right|^2\right]\D z =0.  
\end{align*}
Utilizing the continuity of $p(\cdot,x)$, we immediately obtain
\begin{align*}
   \lim_{r\rightarrow T} \int_{\R} G(z)\E\left[\left|p(r,x)-p(T,x)\right|^2\right]\D z=0.
\end{align*}
Therefore we complete the proof of limit~\eqref{lim rt1}.

To prove~\eqref{lim rt2}, by taking $\gamma=\frac{\alpha}{2}$, we have
\begin{align*}
    &\E\bigg[\Big|\int_r^T (-\Delta)^{\frac{\alpha}{2}}R_s^Tp(s)(x)\D s\Big|^2\bigg]\\
    =& \E\bigg[\Big|\int_r^T \int_{\R} (-\Delta)^{\frac{\alpha}{2}}G_{T,s}(x-y)(p(s;y)-p(s;x))\D y\D s\Big|^2\bigg]\\
    \leq & \E\bigg[\int_r^T \int_{\R} \left|(-\Delta)^{\frac{\alpha}{2}}G_{T,s}(x-y)\right||x-y|^{\gamma}\D y\D s\\
    &\quad\quad\quad\times\int_r^T \int_{\R}\left| (-\Delta)^{\frac{\alpha}{2}}G_{T,s}(x-y)\right||x-y|^{\gamma}\frac{|p(s;y)-p(s;x)|^2}{|x-y|^{2\gamma}}\D y\D s\bigg]\\
    \leq& \Big(\int_r^T\int_{\R} \Big|(-\Delta)^{\frac{\alpha}{2}}G_{T,s}(x-y)\Big||x-y|^{\gamma}\D y\D s\Big)^2\sup_{y}\E\left[\sup_{s\in[0,T]}\frac{|p(s;y)-p(s;x)|^2}{|x-y|^{2\gamma}}\right]\\
    \leq &\, C\,\|g\|^2_{\frac{\alpha}{2}+\beta}(T-r)^{\frac{2\gamma}{\alpha}},
\end{align*}
where the first equation comes from~\eqref{eq 175}, and the first inequality is just Cauchy's inequality, and the last inequality follows from Lemma \ref{pqYZ regular} as well as Lemma \ref{all G estimate}~\eqref{Lemma G 195} since $\gamma\in(0,\alpha)$. Thus,
\begin{align*}
    \lim_{r\rightarrow T}\E\left[\Big|\int_r^T (-\Delta)^{\frac{\alpha}{2}}R_s^Tp(s)(x)\D s\Big|^2\right] =0.
\end{align*}

Next, we prove~\eqref{lim rt3}. By BurkHölder-Davis-Gundy inequality (BDG inequality for short),
\begin{align*}
    &\E^\mathbb{Q} \left[\Big|\int_r^T R_s^T q(s)(x)\D \widetilde{W}_s\Big| \right] \\
    \leq& C \E^\mathbb{Q} \left[ \left( \int_r^T \left( \Big|\int_{\R} G_{T,s}(x-y) [q(s;y)-q(s;x)]\D y\Big|^2+|q(s;x)|^2 \right) \D s \right)^{\frac{1}{2}} \right]\\
    \leq& C \E^\mathbb{Q} \left[ \left( \int_r^T \int_{\R} G_{T,s}(x-y) |q(s;y)-q(s;x)|^2 \D y \D {s} \right)^\frac{1}{2} \right] + C\E^\mathbb{Q} \left[\left( \int_r^T|q(s;x)|^2\D s\right)^\frac{1}{2}\right] \\
    \leq& C \E^\mathbb{P} \left[ \int_r^T \int_{\R} G_{T,s}(x-y) |q(s;y)-q(s;x)|^2 \D y \D {s} \right]^\frac{1}{2} + C\E^\mathbb{P} \left[ \int_r^T|q(s;x)|^2\D s \right]^\frac{1}{2} \\
    \leq & C \left(\int_{\R} \sup_{s\in[0,T]}G_{T,s}(x-y)|x-y|^{\gamma}\D y \cdot\sup_{y}\E^\mathbb{P} \left[\int_r^T \frac{|q(s;y)-q(s;x)|^2}{|y-x|^{\gamma}}\D s \right] \right)^\frac{1}{2} \\
    & + C\E^\mathbb{P} \left[ \int_r^T|q(s;x)|^2\D s \right]^\frac{1}{2},
\end{align*}
where $0<\gamma<\alpha\wedge 2\beta$. By Lemma \ref{all G estimate}~\eqref{sup G estimate} and Lemma \ref{pqYZ regular}, we have
$$
\lim_{r\rightarrow T} \left(\int_{\R} \sup_{s\in[0,T]}G_{T,s}(x-y)|x-y|^{\gamma}\D y \cdot\sup_{y}\E^\mathbb{P} \left[\int_r^T \frac{|q(s;y)-q(s;x)|^2}{|y-x|^{\gamma}}\D s \right] \right) =0,
$$
and
$$
\lim_{r\rightarrow T} \E^\mathbb{P} \left[ \int_r^T|q(s;x)|^2\D s \right] =0,
$$
which imply~\eqref{lim rt3}, therefore~\eqref{eq 226} holds true.

Next, for $\epsilon>0$, denoting $r_t(\varepsilon)=(r-\varepsilon)\vee t$, we get
\begin{align*}
R_{r_t(\varepsilon)}^r Y(r_t(\varepsilon);r)(x)=R_t^rY(t;r)(x)+\int_t^{r_t(\varepsilon)}  a(u) (-\Delta)^{\frac{\alpha}{2}} R_u^rY(u;r)(x)\D u+\int_t^{r_t(\varepsilon)} R_u^rZ(u;r)(x)\D \widetilde{W}_u.
\end{align*}
In order to prove
\begin{align}\label{eq 234}
   \nonumber \int_t^T f(r,x)\D r =&\int_t^T R_t^rY(t;r)(x)\D r\\
    &+\int_t^T\int_t^{r}  a(u) (-\Delta)^{\frac{\alpha}{2}} R_u^rY(u;r)(x)\D u\D r+\int_t^T\int_t^{r} R_u^rZ(u;r)(x)\D \widetilde{W}_u\D r,
\end{align}
it suffices to show that
\begin{align}
   &\label{lim rt4} \lim_{\epsilon\rightarrow 0}\int_t^T R_{r_t(\varepsilon)}^r Y(r_t(\varepsilon);r)(x)\D r= \int_t^T f(r,x)\D r ,\\
   & \label{lim rt5}\int_t^T\int_{r_t(\varepsilon)}^{r}  (-\Delta)^{\frac{\alpha}{2}} R_u^rY(u;r)(x)\D u\D r=0,\\
   & \label{lim rt6}\int_t^T\int_{r_t(\varepsilon)}^{r} R_u^rZ(u;r)(x)\D \widetilde{W}_u\D r=0,
\end{align}
for all $x\in\R$, $\mathbb{P}$-a.s.

Notice that 
\begin{align*}
    &\E\left[  \Big|\int_t^T R_{r_t(\varepsilon)}^r Y(r_t(\varepsilon);r)(x)-f(r;x)\D r \Big|^2\right]\\
    \leq &C\int_{\R} G(z)\E\int_t^T|Y(r_t(\epsilon);r,x-A_{r_t(\epsilon),r}^{\frac{1}{\alpha}}z)-Y(r_t(\epsilon);r,x)|^2\D r \D z\\
    &+C\int_{\R} G(z)\E\int_t^T|Y(r_t(\epsilon);r,x)-Y(r;r,x)|^2\D r \D z,
\end{align*}
then with similar calculation to derive~\eqref{lim rt1}, we can obtain~\eqref{lim rt4}.

Similarly, with the help of Lemma \ref{all G estimate}~\eqref{sup G estimate} and ~\eqref{delta G sup int bound}, we have
\begin{align*}
    &\E\left[ \Big|\int_t^T\int_{r_t(\varepsilon)}^{r}  (-\Delta)^{\frac{\alpha}{2}} R_u^rY(u;r)(x)\D u\D r\Big|^2\right]\\
    \leq &C\Big(\int_{\R}\sup_{r\in[t,T]}\int_{r_t(\varepsilon)}^r\left|(-\Delta)^{\frac{\alpha}{2}} G_{r,u}(x-y)\right||x-y|^{\gamma}\D u\D y\Big)^2 \cdot \sup_{y}\E\int_t^T \sup_{u\leq r}\frac{|Y(u;r,y)-Y(u;r,x)|^2}{|x-y|^{2\gamma}}\D r,
\end{align*}
and
\begin{align*}
    &\E^{\mathbb{Q}}\left[ \Big|  \int_t^T\int_{r_t(\varepsilon)}^{r} R_u^rZ(u;r)(x)\D \widetilde{W}_u\D r \Big|\right] \leq \int_t^T \E^{\mathbb{Q}}\left[ \Big|\int_{r_t(\varepsilon)}^{r} R_u^rZ(u;r)(x)\D \widetilde{W}_u \Big|\right] \D r  \\
    \leq & C\Big(\int_{\R}\sup_{0\leq u<r\leq T} G_{r,u}(x-y)|x-y|^{\gamma}\D y\Big)^\frac{1}{2} \cdot \left( \int_t^T \sup_{y}\E \left[ \int_{r_t(\varepsilon)}^r \frac{|Z(u;r,y)-Z(u;r,x)|^2}{|y-x|^{\gamma}}\D u \right] \D r \right)^\frac{1}{2} \\
    &+C\,\E\left[ \int_t^T \int_{r_t(\varepsilon)}^r |Z(u;r,x)|^2\D u\D r \right]^\frac{1}{2}.
\end{align*}
Then by the similar argument in the proof of~\eqref{lim rt2} and~\eqref{lim rt3}, we can get~\eqref{lim rt5} and~\eqref{lim rt6}.

In the end, we complete the proof of the theorem by~\eqref {eq 226} and~\eqref{eq 234}.
\end{proof}

\subsubsection{Hölder estimates of solution}

The following Lemmas aim to give the Hölder estimates of the solution to~\eqref{linear-case}.
\begin{lemma}Let Assumptions \ref{Baisc assumption} and \ref{Simple assumption} hold. For the solution $(u,v)$ to BSPDE~\eqref{linear-case}, it holds that
    $$
    \Vert u\Vert_{\alpha+\beta,\mathcal{L}^2}\leq  C\left(\Vert g\Vert_{\frac{\alpha}{2}+\beta,L^2}+ \Vert f\Vert_{\beta,\mathcal{L}^2}\right),
    $$
    where $C=C(\alpha,\beta,K,T,m)$.
\end{lemma}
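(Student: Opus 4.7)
The plan is to work directly from the explicit representation
$$
u(t,x)=R_t^T p(t)(x)+\int_t^T R_t^s Y(t;s)(x)\,\D s
$$
established in Theorem~\ref{Thm1}, combining the Hölder regularity of the stochastic flows $p,Y$ proved in Lemma~\ref{pqYZ regular} with the moment estimates for the fractional heat kernel $G_{s,t}$ of Lemma~\ref{all G estimate}. Since Assumption~\ref{general assumption} gives $\beta>2-\alpha$, we have $\alpha+\beta>2$, so the target norm decomposes as $\|u\|_{\alpha+\beta,\mathcal{L}^2}=\sum_{k=0}^{2}\|D^k u\|_{0,\mathcal{L}^2}+\sum_{k=0}^{2}[D^k u]_{\gamma,\mathcal{L}^2}$ with $\gamma:=\alpha+\beta-2\in(0,1)$, and this decomposition dictates the structure of the proof.

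For the pointwise $\mathcal{L}^2$ bounds on $D^k u$ with $k=0,1$, we pass derivatives through the convolution; since $p\in C^{\alpha/2+\beta}(\R,\mathcal{S}^2)$ and $\alpha/2+\beta>1$, the derivative $Dp$ exists and is $(\alpha/2+\beta-1)$-Hölder (with the analogous statement for $Y$), and $\int G_{s,t}(z)\,\D z=1$ yields the required bound in terms of $\|p\|_{1,\mathcal{S}^2}$ and $\|Y\|_{1,\mathcal{L}^2}$. For the second derivative only one derivative can be moved onto the data, and the remaining one is absorbed by the kernel; using the cancellation $\int_{\R}DG_{s,t}(z)\,\D z=0$ one writes
$$
D^2 u(t,x)=\int_{\R}DG_{T,t}(x-y)\bigl[Dp(t;y)-Dp(t;x)\bigr]\D y+\int_t^T\!\int_{\R}DG_{s,t}(x-y)\bigl[DY(t;s,y)-DY(t;s,x)\bigr]\D y\,\D s.
$$
Applying the Hölder seminorm of $Dp$ together with a moment bound of the form $\int_{\R}|DG_{s,t}(z)||z|^{\alpha/2+\beta-1}\D z\leq C(s-t)^{(\alpha/2+\beta-2)/\alpha}$ provided by Lemma~\ref{all G estimate}, squaring, and integrating in $t$, the $\mathcal{L}^2$ norm is finite precisely because $2(\alpha/2+\beta-2)/\alpha>-1$ is equivalent to $\alpha+\beta>2$.

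For the Hölder seminorms $[D^k u]_{\gamma,\mathcal{L}^2}$ the critical case is $k=2$, handled by the standard splitting trick: given $x,\bar x$ with $\delta:=|x-\bar x|$, partition the $y$-integration into the near field $|x-y|\leq 2\delta$ and the far field $|x-y|>2\delta$. In the near field one bounds each of the two $DG_{s,t}$-integrals separately using the Hölder regularity of $Dp$ and $DY$; in the far field one applies the mean value theorem to $DG_{s,t}(x-\cdot)-DG_{s,t}(\bar x-\cdot)$, producing a factor $\delta$ together with $|D^2 G_{s,t}|$ whose moment $\int|D^2 G_{s,t}(z)||z|^{\alpha/2+\beta-1}\D z\leq C(s-t)^{(\alpha/2+\beta-3)/\alpha}$ is again $t$-integrable in the $\mathcal{L}^2$ sense under the assumption $\alpha+\beta>2$. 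Balancing the two regions yields the desired power $\delta^{\gamma}$. The cases $k=0,1$ are easier variants in which fewer derivatives need to be moved onto the kernel and the splitting is lighter.

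The main obstacle is the simultaneous scarcity of smoothness on the data (only $\alpha/2+\beta$-Hölder) and the nonlocal, slowly-decaying nature of the fractional kernel, which forces the ``half-derivative on kernel, half-derivative on data'' manipulation via $\int DG=0$ and a careful bookkeeping of scaling exponents. All exponent matching goes through precisely because of the structural inequality $\beta>2-\alpha$ of Assumption~\ref{general assumption}; were this violated the singular time integrals would fail to converge, confirming the sharpness of the Hölder threshold. Switching between $\mathbb{P}$ and $\mathbb{Q}$ via Girsanov during the estimation of $Y,Z$ produces only bounded multiplicative constants and does not affect the Hölder bookkeeping.
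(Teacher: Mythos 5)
Your overall route is the same as the paper's: start from the explicit representation of Theorem~\ref{Thm1}, use the flow regularity of Lemma~\ref{pqYZ regular} and the kernel moment bounds of Lemma~\ref{all G estimate}, and handle the Hölder seminorm of $D^2u$ by a near-field/far-field splitting at scale $|x-\bar x|$ with the mean value theorem in the far field. The treatment of the terminal-data term $R_t^Tp(t)$ is essentially correct and matches the paper's $J_1,\dots,J_4$ decomposition: since $g\in C^{\frac{\alpha}{2}+\beta}$ with $\frac{\alpha}{2}+\beta>1$, the flow $p$ is indeed $C^1$ with $Dp$ being $(\frac{\alpha}{2}+\beta-1)$-Hölder, so writing $D^2R_t^Tp(t)(x)=\int DG_{T,t}(x-y)Dp(t,y)\,\D y$ and using $\int DG=0$ is legitimate.

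There is, however, a genuine gap in your treatment of the source term $\int_t^TR_t^sY(t;s)\,\D s$: you assume that $DY$ exists and is Hölder (``the analogous statement for $Y$''), and your displayed formula for $D^2u$ contains $DY(t;s,y)-DY(t;s,x)$. But $Y(t;s,x)=\E^{\Q}[f(s,x)\,|\,\mathcal{F}_t]$ inherits only the regularity of $f$, which by hypothesis lies merely in $C^{\beta}(\R,\mathcal{L}^2_{\mathbb{F}}(0,T))$ with $\beta<1$; Lemma~\ref{pqYZ regular} accordingly controls only the $\beta$-Hölder seminorm of $Y$, not any derivative. Consequently neither $\|Y\|_{1,\mathcal{L}^2}$ nor the difference $DY(t;s,y)-DY(t;s,x)$ is defined, and the exponent $\frac{\alpha}{2}+\beta-1$ you attach to $|z|$ in the kernel moment for this term is unavailable. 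The correct argument (and the one in the paper) keeps \emph{all} spatial derivatives on the kernel for this term, writing $\int_t^T\int_{\R}D^2G_{s,t}(x-y)\bigl[Y(t;s,y)-Y(t;s,x)\bigr]\D y\,\D s$ via the cancellation $\int D^2G_{s,t}=0$, and pays for it with the moment bound $\int_{\R}|D^2G_{s,t}(z)||z|^{\beta}\D z\leq C(s-t)^{(\beta-2)/\alpha}$; the singularity at $s=t$ is then integrable precisely because $\beta>2-\alpha$ (Assumption~\ref{general assumption}), which is the actual place where that hypothesis enters — not through the condition $\alpha+\beta>2$ you cite, which is only the relevant threshold for the terminal-data term. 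With this correction (and the corresponding $I_1,\dots,I_4$ splitting for the Hölder seminorm, which requires $D^3G$ in the far field rather than $D^2G$), your argument goes through.
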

\begin{proof}
Firstly, we prove that
\begin{align}\label{eq 308}
    \Big\Vert\int_{\cdot}^T R_{\cdot}^s Y(\cdot;s)\D s\Big\Vert_{\alpha+\beta,\mathcal{L}^2}\leq C\Vert f\Vert_{\beta,\mathcal{L}^2}.
\end{align}

Note that $\alpha+\beta\in(2,3)$. For $\gamma=0,1$, we obtain
    \begin{align*}
        &\E\int_0^T \Big| D^\gamma \int_t^T R_t^sY(t;s)(x)\D s\Big|^2\D t\\
 \leq &  C \Big(\sup_{0\leq t \leq T} \int_t^T \int_{\R}\Big| D^\gamma G_{s,t}(x-y)\Big|\D y\D s \Big)^2\cdot\sup_{y}\E\int_0^T \sup_{t\leq s}|Y(t;s,y)|^2\D s \leq C 
 %T^{4-\frac{2\gamma}{\alpha}}
 \Vert f\Vert^2_{\beta,\mathcal{L}^2},
    \end{align*} 
where the last inequality uses Lemma \ref{pqYZ regular} and Lemma \ref{all G estimate}~\eqref{Lemma G 200}. For $\gamma=2$, with the help of  Lemma \ref{pqYZ regular} and Lemma \ref{all G estimate}~\eqref{Lemma G 200}, we have
    \begin{align*}
        &\E\int_0^T \Big| D^\gamma \int_t^T R_t^sY(t;s)(x)\D s\Big|^2\D t\\
       % =&\E\int_0^T \Big|\int_t^T\int_{\R} D^2G(s-t,x-y)\big(Y(t;s,y)-Y(t;s,x)\big)\D y\D s\Big|^2\D t\\
        \leq & C\Big( \sup_{t} \int_{t}^T \!\!\!\int_{\R} \left| D^\gamma G_{s,t}(x-y)\right||x-y|^{\beta}\D y\D s \Big)^2 \cdot\sup_{y}\E\int_0^T\sup_{t\leq s}\frac{|Y(t;s,y)-Y(t;s,x)|^2}{|x-y|^{2\beta}}\D s  \leq  C
        %T^{\frac{4\alpha+2\beta-4}{\alpha}}
        \Vert f\Vert_{\beta,\mathcal{L}^2}^2.
    \end{align*}
Thus, 
\begin{align}\label{eq 332}
    \Big\| \int_{\cdot}^T R_{\cdot}^s Y(\cdot;s)\D s\Big\|_{2,\mathcal{L}^2}\leq C\Vert f\Vert_{\beta,\mathcal{L}^2}.
\end{align}

Next, we derive the Hölder estimates of  
$$
\int_{\cdot}^T D^2 R_{\cdot}^s Y(\cdot;s)\D s,
$$
which needs subtle estimates. Denote $\eta=2|x-\bar{x}|$, then
    \begin{align*}
    D^2 \int_t^T\int_{\R}G_{s,t}(x-y)Y(t;s,y)\D y\D s - D^2 \int_t^T\int_{\R}G_{s,t}(\bar{x}-y)Y(t;s,y)\D y\D s= I_1-I_2+I_3-I_4
    , 
    \end{align*}
    where
    \begin{align*}
        I_1(t,x,\bar{x})&=\int_t^T \int_{|y-x|\leq \eta} D^2 G_{s,t}(x-y)\Big(Y(t;s,y)-Y(t;s,x)\Big)\D y\D s,\\
        I_2(t,x,\bar{x})&=\int_t^T \int_{|y-x|\leq \eta} D^2 G_{s,t}(\bar{x}-y)\Big(Y(t;s,y)-Y(t;s,\bar{x})\Big)\D y\D s,\\
        I_3(t,x,\bar{x})&=\int_t^T \int_{|y-x|> \eta} \Big( D^2 G_{s,t}(x-y)- D^2 G_{s,t}(\bar{x}-y)\Big)\Big(Y(t;s,y)-Y(t;s,x)\Big)\D y\D s,\\
        I_4(t,x,\bar{x})&=\int_t^T \int_{|y-x|> \eta} D^2 G_{s,t}(\bar{x}-y)\Big(Y(t;s,\bar{x})-Y(t;s,x)\Big)\D y\D s.
    \end{align*}
In view of Lemma \ref{all G estimate}~\eqref{sup G_2 estimate}, 
    \begin{align*}
        &\E\int_0^T|I_1(t,x,\bar{x})|^2\D t\\
        \leq & C\Big( \int_{|y-x|\leq \eta}\sup_{t\in[0,T]} \int_t^T\left|D^2G_{s,t}(x-y)\right||x-y|^{\beta}\D s\D y\Big)^2 \cdot \sup_{y}\E\int_0^T\sup_{t\leq s}\frac{|Y(t;s,y)-Y(t;s,x)|^2}{|y-x|^{2\beta}}\D s\\
        \leq & C|x-\bar{x}|^{2\alpha+2\beta-4}\cdot\Vert f \Vert_{\beta,\mathcal{L}^2}^2.
    \end{align*}
    Similarly, we have
     \begin{align*}
        &\E\int_0^T|I_2(t,x,\bar{x})|^2\D t\leq C|x-\bar{x}|^{2\alpha+2\beta-4}\cdot\Vert f \Vert_{\beta,\mathcal{L}^2}^2.
    \end{align*}
To prove the estimate of $I_3$, we notice that
   \begin{align*}
        \E\int_0^T|I_3(t,x,\bar{x})|^2\D t \leq & C\Big( \int_{|y-x|> \eta}\sup_{t\in[0,T]}\int_t^T \Big|D^2G_{s,t}(x-y)-D^2G_{s,t}(\bar{x}-y)\Big||x-y|^{\beta}\D s\D y\Big)^2\\
        &\times\sup_{y}\E\int_0^T\sup_{t\leq s}\frac{|Y(t;s,y)-Y(t;s,x)|^2}{|y-x|^{2\beta}}\D s.
    \end{align*}  
Furthermore, by Lemma \ref{all G estimate}~\eqref{sup G_2 estimate},
    \begin{align*}
         &\int_{|y-x|> \eta}\sup_{t\in[0,T]}\int_t^T \Big|D^2G_{s,t}(x-y)-D^2G_{s,t}(\bar{x}-y)\Big||x-y|^{\beta}\D s\D y\\
        \leq & |x-\bar{x}|\int_{|y-x|> \eta}\sup_{t\in[0,T]}\int_t^T\int_0^1\left|D^3G_{t,s}({x-y}+\rho(\bar{x} - x))\right||x-y|^{\beta}\D \rho \D s\D y \leq C|x-\bar{x}|^{\alpha+\beta-2}.
    \end{align*}
Next, we get
    \begin{align*}
        &\E\int_0^T|I_4(t,x,\bar{x})|^2\D t\\
        \leq & C|\bar{x} -x|^{2\beta} \Big( \int_{|y-x|> \eta}\sup_{t\in[0,T]} \int_t^T\left|D^2G_{s,t}(\bar{x}-y)\right| \D s\D y\Big)^2 \cdot \E\int_0^T\sup_{t\leq s}\frac{|Y(t;s,\bar{x})-Y(t;s,x)|^2}{|\bar{x} -x|^{2\beta}}\D s\\
        \leq & C|x-\bar{x}|^{2\alpha+2\beta-4}\cdot\Vert f \Vert_{\beta,\mathcal{L}^2}^2.
    \end{align*}
In summary, we have proved that that
\begin{align}\label{eq 365}
       \Big[ D^2 \int_{\cdot}^T R_{\cdot}^s Y(\cdot;s)\D s\Big]_{\alpha+\beta-2,\mathcal{L}^2}\leq C\Vert f\Vert_{\beta,\mathcal{L}^2}.
\end{align}
Then~\eqref{eq 308} is proved if we combine with~\eqref{eq 332} and~\eqref{eq 365}.

 Next, We prove \begin{align}\label{eq 516}
 \Vert R_{\cdot}^T p(\cdot) \Vert_{\alpha+\beta,\mathcal{L}^2}\leq C\Vert g\Vert_{\frac{\alpha}{2}+\beta,{L}^2}.
\end{align}  
%For $\gamma=0,1$
%    \begin{align*}
%        &\E\int_0^T \Big|D^\gamma
%        R_t^Tp(t)(x)\Big|^2\D t\\
% \leq &  C \Big(\int_0^T \int_{\R}\Big|D^\gammaG(s,x-y)\Big|\D y\D s\Big)^2\cdot\sup_{y}\E\left[ \sup_{t\leq T}|p(t;y)|^2\right]\\
% \leq &C T^{2-\frac{2\gamma}{\alpha}}\Vert g\Vert_{\frac{\alpha}{2}+\beta,{L}^2}^2.
%    \end{align*} 
With the similar calculation that derives~\eqref{eq 332}, we get 
\begin{align}\label{eq 527}
 \left\| R_{\cdot}^T p(\cdot) \right\|_{2,\mathcal{L}^2}\leq C\Vert g\Vert_{\frac{\alpha}{2}+\beta,{L}^2}.
\end{align}  
    Since $\alpha+\beta>2$, by Lemma \ref{pqYZ regular}, $p$ is continuous differentiable with $x$, therefore
    \begin{align}\nonumber
         {D^2 }R_t^T p (t)(x)=\int_{\R} {D}G_{T,t}(x-y){D p}(t,y)\D y.
    \end{align}
%\begin{align*}
%        &\E\int_0^T \Big|D^2 R_t^Tp(t)(x)\Big|^2\D t\\
%        =&\E\int_0^T \Big|\int_{\R} \frac{\partial G}{\partial x}(T-t,x-y)\Big(D p(t,y)-D p(t,x)\Big)\D y\Big|^2\D t\\
 %       \leq & C\Big( \int_0^T\int_{\R} \frac{\partial G}{\partial x}(s,x-y)|x-y|^{a}\D y\D s\Big)^2\cdot\sup_{y}\E\sup_{t\leq T}\frac{\big|D p(t,y)-D p(t,x)\big|^2}{|x-y|^{2a}}\D s\\
%        \leq &CT^{\frac{\alpha+\beta-2}{\alpha}}\Vert g\Vert_{\frac{\alpha}{2}+\beta,{L}^2}^2.
%\end{align*}
%\begin{align*}
    %D^2R_t^T p (t)(x)=\int_{\R} DG_{T,t}(x-y)D p(t,y)\D y.
 %   \end{align*}
In the following, we consider
    \begin{align*}
   J_1+J_2+J_3-J_4= \int_{\R} \left(DG_{T,t}(x-y)-DG_{T,t}(\bar{x}-y)\right)D p(t,y)\D y,
\end{align*}
where 
\begin{align*}
& J_1(t,x,\bar{x})=\int_{|x-y|>\eta}\left(DG_{T,t}(x-y)-DG_{T,t}(\bar{x}-y)\right)\left(D p(t,y)-D p(t,x)\right)\D y,\\
 & J_2(t,x,\bar{x})=\int_{|x-y|>\eta} DG_{T,t}(\bar{x}-y) \left(D p(t,\bar{x})-D p(t,x)\right)\D y\\
 & J_3(t,x,\bar{x})=\int_{|x-y|\leq \eta}DG_{T,t}(x-y)\left(D p(t,y)-D p(t,x)\right)\D y,\\
 & J_4(t,x,\bar{x})=\int_{|x-y|\leq \eta}DG_{T,t}(\bar{x}-y)\left(D p(t,y)-D p(t,\bar{x})\right)\D y.
\end{align*}
Denote $a=\frac{\alpha}{2}+\beta-1$, and
 \begin{align*}
    A_1(t,x,\bar{x},y) &=\int_0^1 \left|D^2 G_{T,t}(\bar{x}-y+r(x-\bar{x}))\right|\D r|y-x|^{a}|x-\bar{x}|,\\
     A_2(t,x,y) &=|y-x|^{-a}\left|D p(t,y)-D p(t,x)\right|.
\end{align*}
Then by Cauchy's inequality, Lemma \ref{all G estimate}~\eqref{Lemma G 219} and Lemma \ref{pqYZ regular}, we have
\begin{align*}
     &\E\int_0^T |J_1(t,x,\bar{x})|^2\D t \leq \sup_{t,y}\E\left[A_2(t,x,y)^2\right]\int_0^T\Big(\int_{|x-y|>\eta}A_1(t,x,\bar{x},y)\D y\Big)^2\D t\\
     \leq& C |x-\bar{x}|^2\|p\|^2_{{\frac{\alpha}{2}+\beta},\,\mathcal{S}^2}\int_0^T\Big(\int_{|z|> \eta/2}\left|{D^2 G}_{T,t}(z)\right|(|z|^{a}+|\eta|^{a})\D y\Big)^2\D t\leq C |x-\bar{x}|^{2\alpha+2\beta-4}\|g\|^2_{\frac{\alpha}{2}+\beta,L^2}.
 \end{align*}
And it holds that by Lemma \ref{all G estimate}~\eqref{Lemma G 219}
\begin{align*}
     &\E\int_0^T |J_2(t,x,\bar{x})|^2\D t\leq \E\left[ \sup_{t\in[0,T]} \left| D p(t,\bar{x})-D p(t,x) \right|^2\right]\int_0^T\Big(\int_{|x-y|>\eta} | DG_{T,t}(\bar{x}-y) | \D y\Big)^2\D t\\
     \leq &C |x-\bar{x}|^{2a} \|p\|^2_{{\frac{\alpha}{2}+\beta},\,\mathcal{S}^2} \int_0^T\Big(\int_{|z|>\eta/2} | DG_{T,t}(z) | \D z\Big)^2\D t\leq C |x-\bar{x}|^{2\alpha+2\beta-4}\|g\|^2_{\frac{\alpha}{2}+\beta,L^2}.
 \end{align*}
Similarly, we have
\begin{align*}
    \E\int_0^T |J_3(t,x,\bar{x})|^2\D t\leq &C\sup_{t,y}\E\left[A_2(t,x,y)^2\right]\int_0^T\Big(\int_{|x-y|\leq \eta}D G_{T,t}(x-y)|x-y|^{a}\D y\Big)^2\D t \\
    \leq& C |x-\bar{x}|^{2\alpha+2\beta-4}\|g\|^2_{\frac{\alpha}{2}+\beta,L^2}\,,
\end{align*}
and
\begin{align*}
    &\E\int_0^T |J_4(t,x,\bar{x})|^2\D t\leq C \sup_{t,y} \E\left[\frac{\left|D p(t,y)-D p(t,\bar{x})\right|}{|y-\bar{x}|^{2a}}\right]\int_0^T\Big(\int_{|x-y|\leq \eta}D G_{T,t}(\bar{x}-y) |\bar{x}-y|^{a}\D y\Big)^2\D t \\
    \leq &C \|p\|^2_{{\frac{\alpha}{2}+\beta},\,\mathcal{S}^2} \int_0^T\Big(\int_{|z|\leq 3\eta/2}D G_{T,t}(z) |z|^{a}\D y\Big)^2\D t
    \leq C |x-\bar{x}|^{2\alpha+2\beta-4}\|g\|_{\frac{\alpha}{2}+\beta,L^2}\,.
\end{align*}
%\begin{align*}
 %   &\int_0^T\Big(\int_{|x-y|\leq \eta}\frac{\partial G}{\partial x}(T-t,x-y)|x-y|^{a}\D y\Big)^2\D t\\
  %  \leq & \int_0^T\Big(\int_{|y|\leq \eta t^{-\frac{1}{\alpha}}}\frac{|y|^{a}}{1+|y|^{\alpha+2}}\D y\Big)^2t^{\frac{2a-2}{\alpha}}\D t \\
  %  \leq & \int_0^{\eta^{\alpha}} \Big(\int_{|y|\leq \eta t^{-\frac{1}{\alpha}}} |y|^a \D y\Big)^2t^{\frac{2a-2}{\alpha}}\D t+\int_{\eta^{\alpha}}^T \Big(\int_{|y|\leq 1} |y|^{a}\D y\Big)^2t^{\frac{2a-2}{\alpha}}\D t\\
  %  &+ \int_{\eta^{\alpha}}^T \Big(\int_{1<|y|\leq \eta t^{-\frac{1}{\alpha}}} |y|^{a}\D y\Big)^2t^{\frac{2a-2}{\alpha}}\D t\leq \eta^{2\alpha+2\beta-4}.
%\end{align*}
%Similarly, 
%\begin{align*}
%    \E\int_0^T |I_2(t,x,\bar{x})|^2\D t\leq |x-\bar{x}|^{2\alpha+2\beta-4}.
%\end{align*}
Thus, 
\begin{align}\label{eq 585}
     \left[ D^2 R_{\cdot}^T p(\cdot)\right]_{\alpha+\beta-2,\mathcal{L}^2}\leq C\Vert g\Vert_{\frac{\alpha}{2}+\beta,{L}^2}.
\end{align}
Combing~\eqref{eq 527} and~\eqref{eq 585}, we prove~\eqref{eq 516}. Therefore we finally complete our proof.
\end{proof}

We have the following regularity result for the classical solution. 
\begin{lemma}\label{uv Hölder}
Let Assumptions~\ref{Baisc assumption} and~\ref{Simple assumption} hold. For the solution $(u,v)$ to BSPDE~\eqref{linear-case}, we have
    \begin{align*}
         \Vert u\Vert_{\alpha+\beta,\mathcal{L}^2}+\Vert u\Vert_{\beta,\mathcal{S}^2}+\Vert v\Vert_{\beta,\mathcal{L}^2}\leq C\big(\Vert g\Vert_{\frac{\alpha}{2}+\beta,{L}^2}+\Vert f\Vert_{\beta,\mathcal{L}^2}\big) ,
    \end{align*}
     where $C=C(\alpha,\beta,K,T,m)$.
\end{lemma}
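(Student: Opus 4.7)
The bound $\|u\|_{\alpha+\beta,\mathcal{L}^2}\leq C(\|g\|_{\frac{\alpha}{2}+\beta,L^2}+\|f\|_{\beta,\mathcal{L}^2})$ is already established in the preceding lemma, so the plan reduces to proving $\|u\|_{\beta,\mathcal{S}^2}$ and $\|v\|_{\beta,\mathcal{L}^2}$ satisfy the same kind of bound, via the explicit representation~\eqref{eq108}, the regularity of $(p,q,Y,Z)$ given by Lemma~\ref{pqYZ regular}, and the kernel properties in Lemma~\ref{semi-group} and Lemma~\ref{all G estimate}.

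For $\|u\|_{\beta,\mathcal{S}^2}$, I will split $u=R_\cdot^T p(\cdot)+\int_\cdot^T R_\cdot^s Y(\cdot;s)\D s$ and treat the two pieces separately. For the first piece, using the change of variables from Lemma~\ref{semi-group} to write $R_t^T p(t)(x)=\int_{\R}G(z)\,p\bigl(t,\,x-A_{T,t}^{1/\alpha}z\bigr)\D z$ together with Jensen's inequality in $z$ against the probability density $G$, one controls
$$\E\sup_{t\in[0,T]}\bigl|R_t^T p(t)(x)-R_t^T p(t)(\bar{x})\bigr|^2 \le \sup_{y\in\R}\E\sup_{t\in[0,T]}\bigl|p(t,x-y)-p(t,\bar{x}-y)\bigr|^2 \le [p]_{\beta,\mathcal{S}^2}^{2}|x-\bar{x}|^{2\beta},$$
and a uniform bound on $R_t^T p(t)(x)$ follows the same way from $\|p\|_{0,\mathcal{S}^2}$. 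Lemma~\ref{pqYZ regular} then controls everything by $\|g\|_{\frac{\alpha}{2}+\beta,L^2}$. For the integral term $\int_t^T R_t^s Y(t;s)(x)\D s$, I would again apply Jensen/Cauchy--Schwarz twice (once in $z$ against $G$, once in $s$ against the finite time interval) to bound
$$\E\sup_{t\in[0,T]}\Bigl|\int_t^T R_t^s Y(t;s)(x)\D s\Bigr|^2 \lesssim \sup_{y}\E\int_0^T\sup_{t\le s}\bigl|Y(t;s,y)\bigr|^2\D s,$$
and the Hölder seminorm in $x$ reduces analogously to $\sup_y \E\int_0^T\sup_{t\le s}|Y(t;s,y)-Y(t;s,x)|^2/|y-x|^{2\beta}\D s$, both controlled by $\|f\|_{\beta,\mathcal{L}^2}^2$ via Lemma~\ref{pqYZ regular}.

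For $\|v\|_{\beta,\mathcal{L}^2}$, the strategy is parallel but with the $\mathcal{L}^2$ norm (no supremum), working with $v=R_\cdot^T q(\cdot)+\int_\cdot^T R_\cdot^s Z(\cdot;s)\D s$. The first piece is handled by the same kernel-convolution/Jensen argument as above, trading the $\mathcal{S}^2$ bound for the $\mathcal{L}^2$ bound on $q$ supplied by Lemma~\ref{pqYZ regular}, which again only costs $\|g\|_{\frac{\alpha}{2}+\beta,L^2}$. For $\int_t^T R_t^s Z(t;s)(x)\D s$, I apply Cauchy--Schwarz in $s\in[t,T]$ and in $z$ against $G_{s,t}$ to reduce to $\sup_{y}\E\int_0^T\int_0^r |Z(t;r,y)|^2\D t\D r$ and its Hölder analogue, both again estimated by $\|f\|_{\beta,\mathcal{L}^2}^2$ via Lemma~\ref{pqYZ regular}.

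The main technical obstacle is the $\|u\|_{\beta,\mathcal{S}^2}$ piece, because one must pass the $\sup_{t\in[0,T]}$ both through the spatial convolution with $G_{T,t}$ (whose kernel depends on $t$) and through the $\int_t^T\cdots\D s$ whose lower endpoint is $t$ itself. The former is handled cleanly by Jensen since $\int G\,\D z=1$ uniformly in $t$; the latter is exactly why Lemma~\ref{pqYZ regular} formulates its bound on $Y$ already with $\sup_{t\le s}$ inside the $s$-integral, which is what makes the estimate close. Everything else is a routine application of the estimates already assembled in the previous subsection.
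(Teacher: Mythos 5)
Your proposal is essentially correct but follows a genuinely different route from the paper. The paper does \emph{not} go back to the explicit representation~\eqref{eq108} at this stage: it takes the bound $\Vert u\Vert_{\alpha+\beta,\mathcal{L}^2}\leq C(\Vert g\Vert_{\frac{\alpha}{2}+\beta,L^2}+\Vert f\Vert_{\beta,\mathcal{L}^2})$ from the preceding lemma as you do, but then extracts the $v$-estimate from the \emph{equation} itself, applying It\^o's formula to $u^2(t,x)$ (pointwise in $x$) to get $\E\int_0^T|v|^2\D t\lesssim \E|g|^2+\E\int_0^T(|f|^2+|u|^2+|(-\Delta)^{\frac{\alpha}{2}}u|^2)\D t$, trading $\|(-\Delta)^{\frac{\alpha}{2}}u\|_{\beta,\mathcal{L}^2}$ for $\|u\|_{\alpha+\beta,\mathcal{L}^2}$ via Lemma~\ref{delta Hölder}, and finally getting $\Vert u\Vert_{\beta,\mathcal{S}^2}$ by BDG and Gronwall applied to the BSPDE. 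Your kernel-representation argument for $v=R_\cdot^Tq(\cdot)+\int_\cdot^TR_\cdot^sZ(\cdot;s)\D s$ instead leans on the $(q,Z)$-regularity in Lemma~\ref{pqYZ regular} and the kernel bounds; it is more in the spirit of the previous lemma and avoids It\^o's formula, but it only works in the constant-coefficient setting where the representation is available, whereas the paper's energy argument is the one that survives (and is reused) in the freezing-coefficients step for variable coefficients. Both close the estimate.

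One step in your write-up needs repair. After the change of variables $y=x-A_{T,t}^{1/\alpha}z$, the integrand becomes $p(t;x-A_{T,t}^{1/\alpha}z)$, whose spatial argument moves with $t$; hence
\begin{align*}
\E\sup_{t}\int_{\R}G(z)\bigl|p(t;x-A_{T,t}^{1/\alpha}z)-p(t;\bar{x}-A_{T,t}^{1/\alpha}z)\bigr|^2\D z
\end{align*}
is \emph{not} dominated by $\sup_{y}\E\sup_{t}|p(t;x-y)-p(t;\bar{x}-y)|^2$, because that would require interchanging $\E$ with a supremum over the ($t$-dependent) shift, and $\E\sup_y\geq\sup_y\E$ goes the wrong way. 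The fix is to skip the change of variables: by Cauchy--Schwarz against the probability density $G_{T,t}$ one gets a bound by $\int_{\R}\sup_{t}G_{T,t}(y)\,\E\sup_{t}|p(t;x-y)-p(t;\bar{x}-y)|^2\D y$, and Lemma~\ref{all G estimate}~\eqref{sup G estimate} with $\gamma=0$ gives $\int_{\R}\sup_{t,s}G_{t,s}(y)\D y\leq C$, so the estimate closes with a constant $C$ in place of your constant $1$. The same remark applies to the term $\int_t^TR_t^sY(t;s)(x)\D s$, where the $\sup_{t\leq s}$ built into Lemma~\ref{pqYZ regular} handles the moving lower endpoint exactly as you observe. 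With this adjustment your argument is complete.
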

\begin{proof}
    By Ito's formula, it holds that
    \begin{align}\label{eq 395}
    \D u^2(t,x)=&\left[2 a(t) \left(u (-\Delta )^{\frac{\alpha}{2}}u\right)-2uf-2\sigma(t)uv+v^2\right](t,x)\D t+2(uv)(t,x)\D W_t.
    \end{align}
    Hence we get
    \begin{align*}
        \E\int_0^T |v(t,x)|^2\D t&\leq C\E\left[|g(x)|^2\right]+C\E\int_0^T \left[\left|f(t,x)\right|^2+\left|u(t,x)\right|^2+\left|(-\Delta )^{\frac{\alpha}{2}}u(t,x)\right|^2\right]\D t.
    \end{align*}
    Therefore 
    \begin{align*}
        \|v\|_{0,\mathcal{L}^2}\leq \|g\|_{0,L^2}+ \|u\|_{0,\mathcal{L}^2}+\|(-\Delta )^{\frac{\alpha}{2}}u\|_{0,\mathcal{L}^2}+\|f\|_{0,\mathcal{L}^2}.
    \end{align*}
    According to Lemma \ref{delta Hölder},
    \begin{align*}
        \|(-\Delta )^{\frac{\alpha}{2}}u\|_{0,\mathcal{L}^2}\leq  \|(-\Delta )^{\frac{\alpha}{2}}u\|_{\frac{\beta}{2},\mathcal{L}^2}\leq C\|u\|_{\alpha+\frac{\beta}{2},\mathcal{L}^2}.
    \end{align*}
   It follows that
   \begin{align}
   \nonumber      \|v\|_{0,\mathcal{L}^2}&\leq C\left(\|g\|_{0,L^2}+ \|u\|_{0,\mathcal{L}^2}+\|u\|_{\alpha+\frac{\beta}{2},\mathcal{L}^2}+\|f\|_{0,\mathcal{L}^2}\right)\\
     \label{eq 498}    &\leq C\left( \|g\|_{\frac{\alpha}{2}+\beta,L^2}+ \|u\|_{\alpha+\beta,\mathcal{L}^2}+\|f\|_{\beta,\mathcal{L}^2}\right).
   \end{align}
Similarly, we have
\begin{align*}
          \E\int_0^T |v(t,x)-v(t,y)|^2\D t\leq& C\E\left[|g(x)-g(y)|^2\right]+C\E\int_0^T \left[\left|f(t,x)-f(t,y)\right|^2\right]\D t  \\
           +&C\E\int_0^T \left[\left|u(t,x)-u(t,y)\right|^2+\left|(-\Delta )^{\frac{\alpha}{2}}u(t,x)-(-\Delta )^{\frac{\alpha}{2}}u(t,y)\right|^2\right]\D t. 
\end{align*}
       It yields that
   \begin{align*}
         \|v\|_{\beta,\mathcal{L}^2}&\leq C \left(\|g\|_{\beta,L^2}+ \|u\|_{\beta,\mathcal{L}^2}+\|(-\Delta )^{\frac{\alpha}{2}}u\|_{\beta,\mathcal{L}^2}+\|f\|_{\beta,\mathcal{L}^2}\right)\\
         &\leq C \left(\|g\|_{\frac{\alpha}{2}+\beta,L^2}+ \|u\|_{\alpha+\beta,\mathcal{L}^2}+\|f\|_{\beta,\mathcal{L}^2}\right) \leq C \left(\|g\|_{\frac{\alpha}{2}+\beta,L^2} + \|f\|_{\beta,\mathcal{L}^2}\right).
   \end{align*}

By BDG inequality and Gronwall inequality, we have
\begin{align*}
    \E\left[\max_{t\in[0,T]}|u(t,x)|^2\right]\leq C\E\left[|g(x)|^2\right]+C\E\int_0^T \left[|v(t,x)|^2+|(-\Delta)^{\frac{\alpha}{2}}u(t,x)|^2+|f(t,x)|\right]\D t.
\end{align*}
and 
\begin{align*}
    \E\left[\max_{t\in[0,T]}|u(t,x)-u(t,y)|^2\right]\leq &C\E\left[|g(x)-g(y)|^2\right]+C\E\int_0^T \left[|v(t,x)-v(t,y)|^2\right]\D t\\
    +&C\E\int_0^T\left[|(-\Delta)^{\frac{\alpha}{2}}u(t,x)-(-\Delta)^{\frac{\alpha}{2}}u(t,y)|^2+|f(t,x)-f(t,y)|^2\right] \D t.
\end{align*}
Therefore
\begin{align*}
    \|u\|_{\beta,\mathcal{L}^2}\leq &C\left(\|g\|_{\beta,{L}^2}+\|v\|_{\beta,\mathcal{L}^2}+\|(\Delta)^{\frac{\alpha}{2}} u\|_{\beta,\mathcal{L}^2}+\|f\|_{\beta,\mathcal{L}^2}\right)\leq C(\|g\|_{\frac{\alpha}{2}+\beta,{L}^2}+\|f\|_{\beta,{L}^2}).
\end{align*}
\end{proof}

\subsection{Fractional BSPDE with coefficients $a(t,x)$ and $\sigma(t,x)$}
Next we consider the case where the coefficients $a(\cdot)$ and $\sigma(\cdot)$ depend on space-time variable $(t,x)$, and $(b,c)\neq 0$. The main difficulty is that there is no explicit formulation for the solution in this case. But with the help of techniques of the freezing coefficients method and the continuation method, we can get the wellposedness and regularity for the general fractional BSPDE~\eqref{linear-case 2}, which is Theorem \ref{Theorem genral}.

\subsubsection{Hölder estimate: freezing coefficients method}
In order to apply the continuation method for studying the wellposedness and regularity, we first need to derive the priori Hölder estimation for the solution $(u, v)$. Now we use the freezing coefficients method give the Hölder estimate of the solution to BSPDE~\eqref{linear-case 2}. 

Choose $\phi\in C_c^{\infty}(\R)$ such that
$$
    0\leq \phi\leq 1\quad\text{and} \quad \phi(x)=\begin{cases}
        1,\quad |x|\leq 1;\\
        0,\quad |x|>2.
    \end{cases}
$$
For $z\in{\R}$ and $\theta>0$, we define
\begin{align*}
    \phi_{\theta}^z(x):=\phi\left(\frac{x-z}{\theta}\right).
\end{align*}
For each $\beta \in[0,1)$ and $k\in\mathbb{N}$, it can be verified that
\begin{align}\label{eq 535}
  \left[D^k \phi_{\theta}^z\right]_{\beta}\leq \theta^{-k-\beta} \left[D^k \phi \right]_{\beta},\quad\quad \left[(-\Delta)^{\frac{\alpha}{2}}\phi_{\theta}^z\right]_{\beta}\leq \theta^{-\alpha-\beta} \left[(-\Delta)^{\frac{\alpha}{2}}\phi \right]_{\beta}.
\end{align}

The following two lemmas give the localization properties and interpolation inequalities of Hölder norms, which can be found from Lemma 4.1 and Lemma 2.1 in~\cite{tang2016cauchy}. 
\begin{lemma}\label{tuncture bound}
    Let $h\in C^{m+\gamma}(\R,\mathcal{L}^2_{\mathbb{F}}(0,T))$ with $m=0,1,2$ and $\gamma\in(0,1)$. Then there is a positive constant $C(\theta,\gamma)$ such that
    \begin{align*}
        \|h\|_{m+\gamma,\mathcal{L}^2}\leq 2\sup_{z\in{\R}} \| h \phi_{\theta}^z \|_{m+\gamma,\mathcal{L}^2}+C(\theta,\gamma)\|h\|_{0,\mathcal{L}^2}.
    \end{align*}
\end{lemma}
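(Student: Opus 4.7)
The plan is to combine a cutoff localization with interpolation between H\"older norms. The key geometric observation is that, since $\phi\equiv 1$ on $\{|x|\le 1\}$, the scaled cutoff $\phi_\theta^z$ is identically~$1$ on the ball $B_\theta(z)$, so every positive-order derivative of $\phi_\theta^z$ vanishes at the center $z$. By the Leibniz rule this yields
\begin{align*}
D^k h(z)=D^k\bigl(h\phi_\theta^z\bigr)(z),\qquad 0\le k\le m,
\end{align*}
so immediately $\|D^k h\|_{0,\mathcal{L}^2}\le \sup_{z\in\R}\|h\phi_\theta^z\|_{m+\gamma,\mathcal{L}^2}$ for all such $k$.

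To handle the top-order H\"older seminorm $[D^m h]_{\gamma,\mathcal{L}^2}$, I split pairs $(x,y)$ according to the scale $\theta$. When $|x-y|<\theta$, choosing $z=(x+y)/2$ puts both $x$ and $y$ inside $B_\theta(z)$, on which $h\equiv h\phi_\theta^z$; the difference quotient then equals the one for $D^m(h\phi_\theta^z)$ and is bounded by $\sup_z\|h\phi_\theta^z\|_{m+\gamma,\mathcal{L}^2}$. When $|x-y|\ge\theta$, the triangle inequality gives the crude bound $2\theta^{-\gamma}\|D^m h\|_{0,\mathcal{L}^2}$. Combining,
\begin{align*}
[D^m h]_{\gamma,\mathcal{L}^2}\le \sup_{z\in\R}\|h\phi_\theta^z\|_{m+\gamma,\mathcal{L}^2}+2\theta^{-\gamma}\|D^m h\|_{0,\mathcal{L}^2}.
\end{align*}

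The principal obstacle is closing this bound without inflating the leading constant past~$2$. I would invoke the interpolation inequality (Lemma 2.1 of~\cite{tang2016cauchy}) in the form $\|D^m h\|_{0,\mathcal{L}^2}\le \epsilon [D^m h]_{\gamma,\mathcal{L}^2}+C(\epsilon)\|h\|_{0,\mathcal{L}^2}$, and calibrate $\epsilon=\theta^\gamma/4$ so that the second term above becomes $\tfrac12[D^m h]_{\gamma,\mathcal{L}^2}+C(\theta,\gamma)\|h\|_{0,\mathcal{L}^2}$. Absorbing $\tfrac12[D^m h]_{\gamma,\mathcal{L}^2}$ on the left doubles the coefficient of the sup and yields exactly the desired factor of~$2$. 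The remaining lower-order ingredients $\|D^k h\|_{0,\mathcal{L}^2}$ and $[D^k h]_{\gamma,\mathcal{L}^2}$ for $k<m$ can be controlled by $\epsilon'\sup_z\|h\phi_\theta^z\|_{m+\gamma,\mathcal{L}^2}+C(\epsilon',\theta,\gamma)\|h\|_{0,\mathcal{L}^2}$ using the same interpolation with $\epsilon'$ small, and are absorbed into the $\|h\|_{0,\mathcal{L}^2}$ term without damaging the leading constant. Summing gives the claim; the only delicate point is this final calibration of interpolation parameters.
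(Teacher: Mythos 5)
The paper does not prove this lemma itself --- it is quoted from Lemma~4.1 of \cite{tang2016cauchy} --- so there is no in-text proof to compare against; your argument is the standard localization-plus-interpolation proof and is essentially sound. The two key observations are correct: since $\phi_\theta^z\equiv 1$ on $B_\theta(z)$, one has $D^kh(z)=D^k(h\phi_\theta^z)(z)$, and for $|x-y|<\theta$ the choice $z=(x+y)/2$ places both points in the region where $h=h\phi_\theta^z$, giving $[D^mh]_{\gamma,\mathcal{L}^2}\leq \sup_z\|h\phi_\theta^z\|_{m+\gamma,\mathcal{L}^2}+2\theta^{-\gamma}\|D^mh\|_{0,\mathcal{L}^2}$; the interpolation inequality (the paper's Lemma on $C^{2+\gamma}$ interpolation, or its elementary $m=0,1$ analogues) then closes the estimate. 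The one point to tighten is the final bookkeeping: your calibration $\epsilon=\theta^\gamma/4$ already inflates the coefficient of $\sup_z\|h\phi_\theta^z\|_{m+\gamma,\mathcal{L}^2}$ to exactly $2$ from the top seminorm alone, and the lower-order pieces $\sum_{k\geq 1}\|D^kh\|_{0,\mathcal{L}^2}+\sum_{k<m}[D^kh]_{\gamma,\mathcal{L}^2}$, when routed through interpolation as $\epsilon'[D^mh]_{\gamma,\mathcal{L}^2}+C(\epsilon')\|h\|_{0,\mathcal{L}^2}$, feed back through that same bound and push the total to $2(1+\epsilon')$. This is not a genuine gap --- absorb with a smaller fraction so that the product $(1+\epsilon')/(1-\delta)$ stays below $2$ (e.g.\ $\epsilon'=\delta=1/4$ gives $5/3$), and note that nothing in the paper uses the constant $2$ sharply (it is swallowed into a generic $C$ in the freezing-coefficients estimate) --- but as written the last sentence of your calibration does not quite deliver the stated constant.
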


\begin{lemma}\label{Hölder bound}
   For each $\varepsilon>0$ and $\gamma\in(0,1)$, there is $C=C(\varepsilon,\gamma)>0$ such that for all $\Phi \in C^{2+\gamma}(\R,\mathcal{L}_{\mathbb{F}}^2(0,T))$ 
   \begin{align*}
      \left[\Phi\right]_{1+\gamma,\mathcal{L}^2}+ \|\Phi\|_{2,\mathcal{L}^2}\leq \varepsilon \left[\Phi\right]_{2+\gamma,\mathcal{L}^2}+C\|\Phi\|_{0,\mathcal{L}^2}.
   \end{align*}
\end{lemma}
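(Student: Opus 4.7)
The inequality is the Banach--space--valued analogue of the classical Gagliardo--Nirenberg interpolation between Hölder norms on $\R$. Because the underlying space $X := \mathcal{L}_{\mathbb{F}}^2(0,T)$ enters only through the norm applied to $\Phi(x)-\Phi(y)$ and to the $x$--derivatives $D^k\Phi(x)$, its stochastic/temporal structure is irrelevant: one needs only that $X$ is a Banach space in which continuous curves can be Bochner--integrated. The plan is therefore to reproduce the scalar proof with $|\cdot|$ replaced by $\|\cdot\|_X$, estimating each left--hand term by $\varepsilon[\Phi]_{2+\gamma,X}+C(\varepsilon)\|\Phi\|_{0,X}$ in turn.

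The heart of the argument proceeds in four steps. First, I would bound $\|D^2\Phi\|_{0,X}$ via the second--difference identity
\begin{equation*}
\Phi(x+2h)-2\Phi(x+h)+\Phi(x)=h^2 D^2\Phi(x)+\int_0^h\!\!\int_0^h\!\bigl[D^2\Phi(x+s+t)-D^2\Phi(x)\bigr]\,ds\,dt,
\end{equation*}
which gives $\|D^2\Phi(x)\|_X\le 4h^{-2}\|\Phi\|_{0,X}+Ch^\gamma [D^2\Phi]_{\gamma,X}$, and choosing $h$ small yields $\|D^2\Phi\|_{0,X}\le \tfrac{\varepsilon}{3}[D^2\Phi]_{\gamma,X}+C_1\|\Phi\|_{0,X}$. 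Second, the Landau--type inequality $\|D\Phi\|_{0,X}\le 2h^{-1}\|\Phi\|_{0,X}+(h/2)\|D^2\Phi\|_{0,X}$, obtained from $\Phi(x+h)-\Phi(x)=hD\Phi(x)+\int_0^h(h-s)D^2\Phi(x+s)\,ds$, combined with Step 1 yields $\|D\Phi\|_{0,X}\le \tfrac{\varepsilon}{3}[D^2\Phi]_{\gamma,X}+C_2\|\Phi\|_{0,X}$.

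Third, for $[D\Phi]_{\gamma,X}$ I would split into close and far pairs $(x,y)$. For $|y-x|\le\rho$ the Taylor identity
\begin{equation*}
D\Phi(y)-D\Phi(x)=(y-x)D^2\Phi(x)+(y-x)\int_0^1[D^2\Phi(x+s(y-x))-D^2\Phi(x)]\,ds
\end{equation*}
gives $\|D\Phi(y)-D\Phi(x)\|_X/|y-x|^\gamma\le \rho^{1-\gamma}\|D^2\Phi\|_{0,X}+\rho[D^2\Phi]_{\gamma,X}$; for $|y-x|>\rho$, just bound by $2\rho^{-\gamma}\|D\Phi\|_{0,X}$. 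Picking $\rho$ small and invoking Steps 1--2 gives $[D\Phi]_{\gamma,X}\le \tfrac{\varepsilon}{3}[D^2\Phi]_{\gamma,X}+C_3\|\Phi\|_{0,X}$. An analogous splitting (using $\|D\Phi\|_{0,X}$ for close pairs, Step 2, and $\|\Phi\|_{0,X}$ for far pairs) handles $[\Phi]_{\gamma,X}$. Summing the four estimates and $\|\Phi\|_{0,X}\le\|\Phi\|_{0,X}$ yields the claim.

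No genuine obstacle arises: this is an elementary, dimension--free interpolation. The only bit of care is bookkeeping the three small parameters $h,\rho,\varepsilon$ so that the final coefficient in front of $[\Phi]_{2+\gamma,X}$ is $\le\varepsilon$ and all absorbed constants collapse into a single $C(\varepsilon,\gamma)$.
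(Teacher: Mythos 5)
Your proof is correct: the second-difference bound for $\|D^2\Phi\|_{0,X}$, the Landau-type bound for $\|D\Phi\|_{0,X}$, and the near/far splitting for the seminorms $[D\Phi]_{\gamma,X}$ and $[\Phi]_{\gamma,X}$ are exactly the standard interpolation argument, and it carries over verbatim to the Banach-space-valued setting since only the norm of differences and derivatives is ever used. The paper itself gives no proof—it simply cites Lemma 2.1 of Tang and Wei—so your write-up supplies the expected standard argument behind that citation; the only bookkeeping to note is that with the paper's convention $[\Phi]_{2+\gamma}=\sum_{k=0}^2[D^k\Phi]_\gamma$, bounding everything by $\varepsilon[D^2\Phi]_{\gamma,X}+C\|\Phi\|_{0,X}$ is indeed sufficient.
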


Now, we apply the freezing coefficients method to derive the Hölder estimates for the solution to BSPDE (\ref{linear-case 2}).
\begin{theorem}\label{prior Hölder}
   Let Assumptions \ref{Baisc assumption} and \ref{general assumption} hold. If $(u,v)$ solves the fractional BSPDE~\eqref{linear-case 2}, then $u$ is in $C_{\mathcal{S}^2}^{\beta}\cap C_{\mathcal{L}^2}^{\alpha+\beta}$ and $v$ is in $C_{\mathcal{L}^2}^{\beta}$ such that
\begin{align*}
        \Vert u\Vert_{\alpha+\beta,\mathcal{L}^2}+ \Vert u\Vert_{\beta,\mathcal{S}^2}+\Vert v\Vert_{\beta,\mathcal{L}^2}\leq C\big(\Vert g\Vert_{\frac{\alpha}{2}+\beta,{L}^2}+\Vert f\Vert_{\beta,\mathcal{L}^2}\big),
\end{align*}
where $C=C(\alpha,\beta,\Lambda,T,m,K)$.
\end{theorem}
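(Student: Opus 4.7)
The plan is to reduce the variable-coefficient BSPDE to the constant-coefficient case (Theorem~\ref{Thm1}) locally, via the freezing coefficients method, and then patch together using the localization and interpolation lemmas. Fix a reference point $z\in\R$, a small parameter $\theta>0$ to be chosen later, and the cutoff $\phi=\phi^{z}_{\theta}$. Setting $w:=u\phi$ and $\tilde v:=v\phi$, I would multiply the BSPDE~\eqref{linear-case 2} by $\phi$ and add/subtract $a(t,z)(-\Delta)^{\frac{\alpha}{2}}w$ and $\sigma(t,z)\tilde v$ to obtain an equation of the form
\begin{align*}
-\D w(t,x)=\Bigl[-a(t,z)(-\Delta)^{\frac{\alpha}{2}}w+\sigma(t,z)\tilde v+F(t,x)\Bigr]\D t-\tilde v\,\D W_t,\qquad w(T,x)=(g\phi)(x),
\end{align*}
where the forcing $F$ collects four groups of error terms:
\begin{align*}
F=\underbrace{[a(t,z)-a(t,x)]\,\phi\,(-\Delta)^{\frac{\alpha}{2}}u}_{(\mathrm{I})}-\underbrace{a(t,z)\bigl[(-\Delta)^{\frac{\alpha}{2}}(u\phi)-\phi\,(-\Delta)^{\frac{\alpha}{2}}u\bigr]}_{(\mathrm{II})}+\underbrace{[\sigma(t,x)-\sigma(t,z)]v\phi}_{(\mathrm{III})}+\underbrace{\phi(bu_x+cu+f)}_{(\mathrm{IV})}.
\end{align*}

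Since $a(t,z)$ and $\sigma(t,z)$ are spatially constant, Theorem~\ref{Thm1} applies to $(w,\tilde v)$ and gives
\begin{align*}
\Vert w\Vert_{\alpha+\beta,\mathcal{L}^2}+\Vert w\Vert_{\beta,\mathcal{S}^2}+\Vert\tilde v\Vert_{\beta,\mathcal{L}^2}\leq C\bigl(\Vert g\phi\Vert_{\frac{\alpha}{2}+\beta,L^2}+\Vert F\Vert_{\beta,\mathcal{L}^2}\bigr).
\end{align*}
The bulk of the work is then the $C^{\beta}(\R,\mathcal{L}^2)$-estimation of $F$. For term (I), I would exploit the $\beta$-Hölder continuity of $a$ together with $\mathrm{supp}(\phi)\subset B_{2\theta}(z)$, which forces $|a(t,z)-a(t,x)|\leq \Lambda(2\theta)^{\beta}$ on the support of $\phi$; coupled with the bound $\Vert(-\Delta)^{\frac{\alpha}{2}}u\Vert_{\beta,\mathcal{L}^2}\leq C\Vert u\Vert_{\alpha+\beta,\mathcal{L}^2}$ (Lemma analogous to Lemma~\ref{delta Hölder}), this contributes $C\theta^{\beta}\Vert u\Vert_{\alpha+\beta,\mathcal{L}^2}$. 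Term (III) is handled similarly and contributes $C\theta^{\beta}\Vert v\Vert_{\beta,\mathcal{L}^2}$. Term (IV) is lower order and is estimated directly using the Hölder norms of $b,c,f$, giving a bound in terms of $\Vert u\Vert_{1+\beta,\mathcal{L}^2}+\Vert f\Vert_{\beta,\mathcal{L}^2}$. The delicate piece is the commutator (II); using the pointwise representation~\eqref{pointwise formula} of $(-\Delta)^{\alpha/2}$ together with the bounds~\eqref{eq 535} on derivatives of the cutoff, I would show that the commutator is a pseudo-differential operator of order strictly less than $\alpha$ acting on $u$, so that its $C^{\beta}(\R,\mathcal{L}^2)$ norm is controlled by $C(\theta)\Vert u\Vert_{\gamma,\mathcal{L}^2}$ for some $\gamma<\alpha+\beta$.

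Summing the four contributions and taking $\sup_{z}$, Lemma~\ref{tuncture bound} yields
\begin{align*}
\Vert u\Vert_{\alpha+\beta,\mathcal{L}^2}+\Vert u\Vert_{\beta,\mathcal{S}^2}+\Vert v\Vert_{\beta,\mathcal{L}^2}\leq C\theta^{\beta}\bigl(\Vert u\Vert_{\alpha+\beta,\mathcal{L}^2}+\Vert v\Vert_{\beta,\mathcal{L}^2}\bigr)+C(\theta)\bigl(\Vert u\Vert_{\text{lower},\mathcal{L}^2}+\Vert g\Vert_{\frac{\alpha}{2}+\beta,L^2}+\Vert f\Vert_{\beta,\mathcal{L}^2}\bigr).
\end{align*}
Choosing $\theta$ small enough so that $C\theta^{\beta}<\tfrac12$ absorbs the top-order terms on the left. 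The remaining lower-order norms of $u$ are then controlled by $\Vert u\Vert_{0,\mathcal{L}^2}$ and an arbitrarily small piece of $\Vert u\Vert_{\alpha+\beta,\mathcal{L}^2}$ via the interpolation inequality of Lemma~\ref{Hölder bound} (extended to include intermediate orders up to $\alpha$). Finally, to close the estimate I would establish the standard energy bound $\Vert u\Vert_{0,\mathcal{S}^2}+\Vert v\Vert_{0,\mathcal{L}^2}\leq C(\Vert g\Vert_{0,L^2}+\Vert f\Vert_{0,\mathcal{L}^2})$ by applying Itô's formula to $u^{2}(t,x)$ pointwise in $x$ (as in~\eqref{eq 395}) and using the Gronwall inequality together with the nonnegativity of $\langle u,(-\Delta)^{\alpha/2}u\rangle$ at the pointwise level (or, more cleanly, after integration against a smooth weight). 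Combining everything gives the desired a priori estimate with $C=C(\alpha,\beta,\Lambda,T,m,K)$.

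The main obstacle I expect is the sharp $C^{\beta}(\R,\mathcal{L}^2)$-control of the commutator (II); concretely, proving that $[(-\Delta)^{\alpha/2},\phi]$ gains enough regularity on $u$ so that its output, measured in $C^{\beta}(\R,\mathcal{L}^2)$, is dominated by intermediate-order Hölder norms of $u$ that can subsequently be absorbed via Lemma~\ref{Hölder bound}. Once this commutator estimate is in place, the rest is a bookkeeping argument combining the freezing, localization, interpolation and energy bounds.
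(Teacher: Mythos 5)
Your plan follows essentially the same route as the paper's proof: the same freezing-coefficient decomposition (your terms (I)--(IV) are exactly the paper's $K_2$, $K_1$, $K_3$ and $K_4+K_5+K_6$), the same application of Theorem~\ref{Thm1} to the frozen equation followed by Lemma~\ref{tuncture bound} and the interpolation Lemma~\ref{Hölder bound}, the same pointwise-formula treatment of the commutator (the paper obtains the bound $C(\theta)\Vert u\Vert_{1+\beta,\mathcal{L}^2}$, consistent with your claimed order $\gamma<\alpha+\beta$ since $\alpha>1$), and the same It\^o/Gronwall closing of the zeroth-order norms. One small correction: $u(x)\,(-\Delta)^{\alpha/2}u(x)$ is not nonnegative pointwise, but this is immaterial, as the paper simply bounds $|(-\Delta)^{\alpha/2}u|$ via Lemma~\ref{delta Hölder} and absorbs it by interpolation, which is in effect your stated fallback.
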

   
\begin{proof}
For $t\in[0,T]$ and $x\in\R$, we define 
\begin{align*}
    u_\theta^z(t,x)=u(t,x)\phi_\theta^z(x),\quad v_\theta^z(t,x)=v(t,x)\phi_\theta^z(x)\quad\text{and}\quad  g_\theta^z(x)=g(x)\phi_\theta^z(x).
\end{align*}
Note that
$$
u_\theta^z(T,x)=g_\theta^z(x).
$$
And by Ito's formula, we have
\begin{align}\label{eq 640}
    \D u_\theta^z(t,x)=\left[a(t,z)(-\Delta)^{\frac{\alpha}{2}}u_\theta^z(t,x)-f_{\theta}^z(t,x)-\sigma(t,z)v_\theta^z(t,x)\right]\D t+v_{\theta}^z(t,x)\D W_t,
\end{align}
where $f_{\theta}^z(t,x) :=\sum_{i=1}^6 K_i(t,x,z,\theta)$ with 
\begin{align*}
    &K_1(t,x,z,\theta):=a(t,z)\left((-\Delta)^{\frac{\alpha}{2}}u_\theta^z(x)-(-\Delta)^{\frac{\alpha}{2}}u(t,x)\cdot \phi_\theta^z(x)\right),\\
    &K_2(t,x,z,\theta):= (a(t,z)-a(t,x))(-\Delta)^{\frac{\alpha}{2}} u(t,x)\cdot\phi_\theta^z(t,x),\\
    &K_3(t,x,z,\theta):=(\sigma(t,x)-\sigma(t,z))v_\theta^z(t,x),\quad\quad 
    K_4(t,x,z,\theta):= b(t,x){D u}{ }(t,x)\cdot\phi_{\theta}^z(x),\\ &K_5(t,x,z,\theta):=c(t,x)u_{\theta}^z(t,x),\quad\quad  K_6(t,x,z,\theta):=f(t,x)\phi_{\theta}^z(x).
\end{align*}
By Lemma \ref{uv Hölder} and Lemma \ref{tuncture bound}, we have
\begin{align}\label{eq 590}
    \|u\|_{\alpha+\beta,\mathcal{L}^2}+ \|v\|_{\beta,\mathcal{L}^2}\leq C\left(\sup_z\|g^z_{\theta}\|_{\frac{\alpha}{2}+\beta,{L}^2}+\sup_z\|f_{\theta}^z\|_{\beta,\mathcal{L}^2}\right)+C(\theta)\left(\|u\|_{0,\mathcal{L}^2}+\|v\|_{0,\mathcal{L}^2}\right).
\end{align}
Next, we estimate $\|g^z_{\theta}\|_{\frac{\alpha}{2}+\beta,{L}^2}$ and $\|K_i(\cdot,\cdot,z,\theta)\|_{\beta,\mathcal{L}^2}\, ( 1\leq i\leq 6)$, one by one.

For $\|g^z_{\theta}\|_{\frac{\alpha}{2}+\beta,{L}^2}$, it follows from~\eqref{eq 535} that
\begin{align*}
    &\|g_{\theta}^z\|_{\frac{\alpha}{2}+\beta,L^2}=\|\phi_\theta^zg\|_{0,L^2}+\|D(\phi_\theta^zg)\|_{0,L^2}+[\phi_\theta^zg]_{\frac{\alpha}{2}+\beta,L^2}\\[0.2cm]
    \leq &C\left(\theta^{-1}+\theta^{-\frac{\alpha}{2}-\beta}\right)\left\|g\right\|_{0,L^2}+C\left(1+\theta^{-\frac{\alpha}{2}-\beta+1}\right)\left\|g\right\|_{1,L^2}+C\theta^{-1}\left[g\right]_{\frac{\alpha}{2}+\beta-1,L^2}+C\left[g\right]_{\frac{\alpha}{2}+\beta,L^2}\\
    \leq &C(\theta)\|g\|_{\frac{\alpha}{2}+\beta,L^2}.
\end{align*}

For $\|K_1(\cdot,\cdot,z,\theta)\|_{\beta,\mathcal{L}^2}$, we first consider the case $\alpha=2$, in which it holds that
$$
K_1(t,x,z,\theta):=a(t,z)\left(2 D u(t,x)\cdot D\phi_\theta^z(x) + u(t,x)\cdot D^2\phi_\theta^z(x)\right),
$$
which gives that $\|K_1(\cdot,\cdot,z,\theta)\|_{\beta,\mL^2} \leq C(\theta) \|u\|_{1+\beta,\mL^2}$ by~\eqref{eq 535}. For the case $1<\alpha<2$, we use the pointwise formulation~\eqref{pointwise formula} to get
\begin{align*}
    &-(-\Delta)^{\frac{\alpha}{2}}u_\theta^z(t,x)+(-\Delta)^{\frac{\alpha}{2}}u(t,x)\phi_\theta^z(x)\\
    =&-\int_{|y|>1}\frac{1}{|y|^{1+\alpha}}\left[u_\theta^z(t,x)-u_\theta^z(t,x+y)-\phi_{\theta}^z(x)(u(t,x)-u(t,x+y))\right]\D y\\
    &-\int_{|y|\leq 1}\frac{1}{|y|^{1+\alpha}}\left[u_\theta^z(t,x)-u_\theta^z(t,x+y)+Du_\theta^z(t,x)y\right]\D y\\
      &+\int_{|y|\leq 1}\frac{1}{|y|^{1+\alpha}}\left[\phi_{\theta}^z(x)\left(u(t,x)-u(t,x+y)+Du(t,x)y\right)\right]\D y\\
        =&\int_{|y|>1}\frac{u(t,x+y)}{|y|^{1+\alpha}}\left[\phi_\theta^z(x+y)-\phi_\theta^z(x)\right]\D y\\
    &+\int_{|y|\leq 1}\frac{1}{|y|^{1+\alpha}}\left[ u(t,x+y)-u(t,x)\right]\left[ \phi_\theta^z(x+y)-\phi_\theta^z(x)\right]\D y\\
      &+\int_{|y|\leq 1}\frac{1}{|y|^{1+\alpha}}u(t,x)\left[\phi_{\theta}^z(x+y)-\phi_{\theta}^z(x)-D \phi_\theta^z(x)y\right]\D y\\
    =&\int_0^1 \int_{|y|>1}\frac{y}{|y|^{1+\alpha}}u(t,x+y)D\phi_{\theta}^z(x+\lambda y)\D y\D \lambda\\
    &+\int_0^1 \int_0^1 \int_{|y|\leq 1}\frac{1}{|y|^{\alpha-1}}Du(t,x+\lambda y)D\phi_{\theta}^z(x+\mu y)\D y\D \lambda\D\mu\\
    &+\int_0^1\int_{|y|\leq 1}\frac{1-\lambda}{|y|^{\alpha-1}}u(t,x)D^2\phi_\theta^z(x+\lambda y)\D y\D \lambda:=\sum_{i=1}^3 M_i(t,x),
\end{align*}
where $M_i(t,x),\,i=1,2,3$ denote the $i$-th term in the above summation. By~\eqref{eq 535}, we get
$$
\|M_1 \|_{0,\mathcal{L}^2}^2 \leq  \frac{C}{\theta^2} \|u \|_{0,\mathcal{L}^2}^2,\quad\quad \|M_2 \|_{0,\mathcal{L}^2}^2 \leq \frac{C}{\theta^2} \| Du \|_{0,\mathcal{L}^2}^2, \quad\quad \|M_3 \|_{0,\mathcal{L}^2}^2 \leq \frac{C}{\theta^4} \| u \|_{0,\mathcal{L}^2}^2,
$$
and
\begin{align*}
      \sum_{i=1}^3 [M_i]_{\beta,\mathcal{L}^2}^2 \leq \left( \frac{C}{\theta^2} + \frac{C}{\theta^4} \right)[u ]_{\beta,\mathcal{L}^2}^2 + \frac{C}{\theta^2}[Du ]_{\beta,\mathcal{L}^2}^2 + \left( \frac{C}{\theta^{2+2\beta}} + \frac{C}{\theta^{4+2\beta}} \right)\|u\|_{0,\mathcal{L}^2}^2 +\frac{C}{\theta^{2+2\beta}}\|D u\|_{0,\mathcal{L}^2}^2.
\end{align*}
Thus for all $\alpha \in(1,2]$, it holds that
$$
\|K_1(\cdot,\cdot,z,\theta)\|_{\beta,\mL^2} \leq C(\theta) \|u\|_{1+\beta,\mL^2}.
$$
Taking $\gamma=\alpha+\beta-2$ in Lemma \ref{Hölder bound}, we know that
$$
\|K_1(\cdot,\cdot,z,\theta)\|_{\beta,\mL^2} \leq C(\theta) \|u\|_{2,\mL^2} \leq \varepsilon C(\theta)\|u\|_{\alpha+\beta,\mL^2}+C(\theta)\|u\|_{0,\mL^2}.
$$

To estimate $\|K_2(\cdot,\cdot,z,\theta)\|_{\beta,\mathcal{L}^2}$, we denote $\left[\cdot\right]_{m+\beta,\mathcal{L}^2, A}$ and $\left\|\cdot\right\|_{m+\beta,\mathcal{L}^2,A}$ to be the semi-norm and norm of functions on subset $A\subset \R$ instead of on the whole space $\R$. By Lemma \ref{Hölder bound} and Lemma \ref{delta Hölder}, we have
\begin{align*}
    & \|K_2(\cdot,\cdot,z,\theta)\|_{\beta,\mL^2}= \|K_2(\cdot,\cdot,z,\theta)\|_{\beta,\mL^2,B_{2\theta}(z)}\\
\leq& [a(\cdot,\cdot)-a(\cdot,z)]_{0,\mL^{\infty},B_{2\theta}(z)}\left[(-\Delta)^{\frac{\alpha}{2}}u\right]_{\beta,\mL^2}\\
    &+\left([a(\cdot,\cdot)-a(\cdot,z)]_{\beta,\mL^{\infty}} + [a(\cdot,\cdot)-a(\cdot,z)]_{0,\mL^{\infty},B_{2\theta}(z)}[\phi_\theta^z]_{\beta} \right)\|(-\Delta)^{\frac{\alpha}{2}}u\|_{0,\mL^2}\\
    \leq & \theta^{\beta} \|u\|_{\alpha+\beta,\mL^2} +C\|u\|_{2,\mL^2}
     \leq C \|u\|_{\alpha+\beta,\mL^2}\left(\theta^\beta +\varepsilon \right) +C(\varepsilon)\|u\|_{0,\mL^2}.
\end{align*}
Similarly, we can derive that
\begin{align*}
  & \|K_3(\cdot,\cdot,z,\theta)\|_{\beta,\mL^2}\leq C\theta^\beta\|v\|_{\beta,\mL^2}+C(\theta)\|v\|_{0,\mL^2},\\
 &  \|K_4(\cdot,\cdot,z,\theta)\|_{\beta,\mL^2}+ \|K_5(\cdot,\cdot,z,\theta)\|_{\beta,\mL^2}\leq \varepsilon C(\theta)[u]_{\alpha+\beta,\mL^2}+C(\varepsilon,\theta)\|u\|_{0,\mL^2},\\
 &  \|K_6(\cdot,\cdot,z,\theta)\|_{\beta,\mL^2}\leq C(\theta)\|f\|_{\beta,\mL^2}.
\end{align*}
In view of~\eqref{eq 590}, we take $\theta,\varepsilon$ small enough such that 
\begin{align} \label{eq 642}  \|u\|_{\alpha+\beta,\mL^2}+\|v\|_{\beta,\mL^2}\leq C\left(\|g\|_{\frac{\alpha}{2}+\beta,\mL^2}+\|f\|_{\beta,\mL^2}+\|u\|_{0,\mL^2}+\|v\|_{0,\mL^2}\right).
\end{align}

With the Ito's formula and the similar calculation that leads to~\eqref{eq 498}, we have
\begin{align*}
    \|v\|_{0,\mL^2}\leq C\left(\|g\|_{0,L^2}+\|u\|_{2,\mL^2}+\|f\|_{0,\mL^2}\right)\leq \varepsilon [u]_{\alpha+\beta,\mathcal{L}^2}+C(\varepsilon)\left(\|g\|_{0,L^2}+\|u\|_{0,\mL^2}+\|f\|_{0,\mL^2}\right).
\end{align*}
Taking $\varepsilon$ small enough, by~\eqref{eq 642} we have
\begin{align}\label{eq 650}
    \|u\|_{\alpha+\beta,\mL^2}+\|v\|_{\beta,\mL^2}\leq C\left(\|g\|_{\frac{\alpha}{2}+\beta,\mL^2}+\|f\|_{\beta,\mL^2}+\|u\|_{0,\mL^2}\right).
\end{align}
Define $\mathcal{L}_{\mathbb{F}}^2(t,T)$ to be the Banach space of real-valued $\mathbb{F}$-adapted process $f$ with the finite norm
\begin{align*}
    \|f\|_{\mathcal{L}_{\mathbb{F}}^2(t,T)}:=
        &\E\Big[\int_t^T |f(s)|^2\D s\Big]^{\frac{1}{2}}<\infty,
        \end{align*}
and simply denote $\|u\|_{\gamma,\mL^2,t}:=\|u\|_{\gamma,\mathcal{L}_{\mathbb{F}}^2(t,T)}$. Using Ito's formula, we get
\begin{align*}
   \sup_{x} \E\left[|u(t,x)|^2\right]\leq& C\left(\|u\|^2_{\alpha+\beta,\mL^2,t}+\|v\|^2_{0,\mL^2,t}+\|g\|_{0,L^2}^2+\|f\|^2_{0,\mL^2}\right)\\
   \leq &C\left(\|u\|^2_{0,\mL^2,t}+\|g\|^2_{\frac{\alpha}{2}+\beta,L^2}+\|f\|^2_{\beta,\mL^2}\right).
\end{align*}
With the help of backward Gronwall's inequality, we have
\begin{align*}
    \|u\|_{0,\mL^2}\leq C\left(\|g\|_{\frac{\alpha}{2}+\beta,L^2}+\|f\|_{\beta,\mL^2}\right).
\end{align*}
Along with~\eqref{eq 650}, it holds that
\begin{align*}
 \|u\|_{\alpha+\beta,\mL^2}+\|v\|_{\beta,\mL^2}\leq C\left(\|g\|_{\frac{\alpha}{2}+\beta,\mL^2}+\|f\|_{\beta,\mL^2}\right).   
\end{align*}

Using Ito's formula again, we have
\begin{align*}
    \|u\|_{\beta,\mathcal{S}^2}\leq C\left(\|g\|_{\beta,L^2}+\|u\|_{\alpha+\beta,\mL^2}+\|v\|_{\beta,\mL^2}+\|f\|_{\beta,\mL^2}\right)\leq C\left(\|g\|_{\frac{\alpha}{2}+\beta,L^2}+\|f\|_{\beta,\mL^2}\right),
\end{align*}
which completes the proof.
\end{proof}
\begin{remark}
    It should be pointed out that, since $(-\Delta)^{\frac{\alpha}{2}}$ is not a local operator, most classical localization methods can not work out here. However, in this paper we can still get the Hölder estimates by successfully applying the freezing coefficients method, which can help us to use local properties of the solution to get the global estimates.
\end{remark}

\subsubsection{Wellposedness: continuation method}
To use the continuation method, we first define the operators $L$ and $M$ as
    $$
    Lu:=-a(-\Delta)^{\frac{\alpha}{2}}u+b{D u}+cu, \quad \quad Mv:=\sigma v,
    $$
    and for $\tau\in[0,1]$, we define the mixed operators
    $$
    L_{\tau}u:=\tau Lu-(1-\tau) (-\Delta)^{\frac{\alpha}{2}} u,\quad\quad M_{\tau}v:=\tau Mv.  
    $$
    Next, define the Banach space
    $$
    \mathcal{S}^{\alpha,\beta}:= \left[ C^{\alpha+\beta}(\R,\mathcal{L}^2_{\mathbb{F}}(0,T))\cap C^{\beta}(\R,\mathcal{S}^2_{\mathbb{F}}(0,T)) \right] \times C^\beta(\R,\mathcal{L}^2_{\mathbb{F}}(0,T))
    $$
    with the norm $\|(u,v)\|_{ \mathcal{S}^{\alpha,\beta}}:=\|u\|_{\alpha+\beta,\mathcal{L}^2}+\|u\|_{\beta,\mathcal{S}^2}+\|v\|_{\beta,\mathcal{L}^2}$, and define the Banach space
    $$
    \quad  \mathcal{B}^{\alpha,\beta}:=C^\beta(\R,\mathcal{L}^2_{\mathbb{F}}(0,T))\times C^{\frac{\alpha}{2}+\beta}(\R,L^2(\Omega))
    $$
     with the norm $\|(f,g)\|_{ \mathcal{B}^{\alpha,\beta}}:=\|f\|_{\beta,\mathcal{L}^2}+\|g\|_{\frac{\alpha}{2}+\beta,L^2}$. 
We are now ready to use the continuation method to prove Theorem \ref{Theorem genral}.
\begin{proof}[Proof of Theorem \ref{Theorem genral}]
    
For $\tau_0>0$ and $(f,g)\in  \mathcal{B}^{\alpha,\beta}$, we consider the following BSPDE
\begin{align}\label{fix-point}
\left\{
\begin{aligned}
  -\D U(t,x)=&\Big[L_\tau U +M_\tau V+f_{\tau_0}(u,v)\Big](t,x) \D t-V(t,x)\D W_t,\quad  (t,x)\in [0,T)\times \mathbb{R},\\
    U(T,x)=&\,\,g(x),\qquad\qquad\qquad\qquad\qquad\qquad\qquad\quad\qquad\qquad\qquad x\in\R,   
\end{aligned}
\right.
\end{align}
   where $f_{\tau_0}(u,v):=f+\tau_0(L_1-L_0)u+\tau_0 M_1v $.
    
    For all $(u,v)\in  \mathcal{S}^{\alpha,\beta}$, it follows that $f_{\tau_0}(u,v)\in C^\beta(\R,\mathcal{L}^2_{\mathbb{F}}(0,T))$. Notice when $\tau=0$, Theorem \ref{Thm1} tells us that BSPDE~\eqref{fix-point} has unique solution $(U_0,V_0)\in  \mathcal{S}^{\alpha,\beta}$. Consider the map $\Pi_0:  \mathcal{S}^{\alpha,\beta}\longrightarrow  \mathcal{S}^{\alpha,\beta}$, $\Pi_0(u,v):=(U_0,V_0)$. 
    
    Now we show that for small enough  $\tau_0$, the map $\Pi_0$ has a fixed point. To see this, we consider $(u^i,v^i)\in \mathcal{S}^{\alpha,\beta}$ and $\Pi_0(u^i,v^i):=(U^i_0,V^i_0)$, $i=1,2$. By Theorem \ref{prior Hölder}, we have
    \begin{align*}
        &\|(U^2_0-U^1_0,V^2_0-V^1_0)\|_{ \mathcal{S}^{\alpha,\beta}}\leq C\|f_{\tau_0}(u^2,v^2)-f_{\tau_0}(u^1,v^1)\|_{\beta,\mathcal{L}^2}\\
        \leq & C \tau_0 \left(\|(L_1-L_0)(u^2-u^1)\|_{\beta,\mathcal{L}^2}+\|M_1(v^2-v^1)\|_{\beta,\mathcal{L}^2}\right) \leq  C\tau_0 \|(u^2-u^1,v^2-v^1)\|_{ \mathcal{S}^{\alpha,\beta}}.
    \end{align*}
    Thus, there exists a constant $\bar{\tau}=\bar{\tau} (\alpha,\beta,\Lambda,T,m,K)$ such that when $\tau_0\in[0, \bar{\tau}]$, the map $\Pi_0$ has a fixed point $(U_0,V_0)$. 
    
    In other words, for each $\tau\in[0,1\wedge \bar{\tau}]$, the following BSPDE
    \begin{equation}\label{eq2407191}
    -\D U(t,x)=(L_\tau U +M_\tau V+f)(t,x) \D t-V(t,x)\D W_t,\quad\quad U(T,x)=g(x),
    \end{equation}
    has a solution $(U_{\tau},V_{\tau})\in \mathcal{S}^{\alpha,\beta}$, by noting that
    $$
    L_{\tau_1} U +M_{\tau_1} V+f_{\tau_2}(U,V) = L_{\tau_1+ \tau_2} U +M_{\tau_1 + \tau_2} V +f,
    $$
    for all $\tau_1, \tau_2\in[0,1]$ with $0\leq \tau_1+ \tau_2\leq 1$.
    
    If $\bar{\tau} \geq 1$, the existence of solution can be immediately derived by taking $\tau=1$. If $ \bar{\tau} <1$, we can continue the above procedure for the new map $\Pi_{\bar{\tau}}:\mathcal{S}^{\alpha,\beta}\longrightarrow  \mathcal{S}^{\alpha,\beta}$ with $\Pi_{\bar{\tau}}(u,v):=\left(U_{\bar{\tau}},V_{\bar{\tau}}\right)$, where $\left(U_{\bar{\tau}},V_{\bar{\tau}}\right)$ is the solution to BSPDE~\eqref{fix-point} when $\tau = \bar{\tau}$. Then with the analogous argument using Theorem \ref{prior Hölder}, we can show that for each $\tau\in[0,1\wedge 2\bar{\tau}]$, the BSPDE~\eqref{eq2407191} has a solution $(U_{\tau},V_{\tau})$. Repeating the procedure for finite steps, we finally derive the existence of the solution to BSPDE~\eqref{eq2407191} when $\tau=1$, which is BSPDE~\eqref{linear-case}. The uniqueness of solution can be directly derived by Theorem~\ref{prior Hölder}.
\end{proof}

\appendix
\section*{Appendix}

\renewcommand{\thesubsection}{\Alph{subsection}}
\setcounter{equation}{0}
\setcounter{lemma}{0}
\setcounter{remark}{0}
\setcounter{theorem}{0}
\setcounter{corollary}{0}
\renewcommand{\thetheorem}{A.\arabic{theorem}}
\renewcommand{\theequation}{A.\arabic{equation}}
\renewcommand{\thelemma}{A.\arabic{lemma}}
\renewcommand{\theremark}{A.\arabic{remark}}
\renewcommand{\thecorollary}{A.\arabic{corollary}}

\subsection{Well-posedness of solution in Soblev spaces}\label{Appendix A}
This subsection is devoted to show the well-posedness of solution to BSPDE~\eqref{linear-case} in Soblev space. According to Remark~\ref{remark664}, we know the Fourier transformation of the solution can be written as
\begin{align}\label{eq2494}
    \hat{u}(t,\xi)=\E\left[\Gamma_{T,t}(\xi)\hat{g}(\xi)+\int_t^T\Gamma_{s,t}(\xi)\hat{f}(s,\xi)\D s \bigg| \mathcal{F}_t\right],
\end{align}
with \begin{align*}
    \Gamma_{s,t}(\xi)=\exp\left(-\int_t^s a(r)|\xi|^{\alpha}\D r\right)\exp\left(\int_t^s \sigma(r)\D W_r-\int_t^s\frac{1}{2}|\sigma(r)|^2 \D r\right):=\hat{\Gamma}_{s,t}(\xi)\,\bar{\Gamma}_{s,t}.
\end{align*}
\begin{lemma}\label{lemma1385}
 Let Assumptions~\ref{Baisc assumption} and~\ref{Simple assumption} hold. If $(f,g)\in \mathbb{H}^\gamma \times L^2(\Omega,H^\gamma)$, then the inverse Fourier transformation of $\hat{u}$ defined in~\eqref{eq2494} satisfies that
    \begin{align*}
    \E\left[\sup_{t\in[0,T]}\|u(t,\cdot)\|^2_{H^{\frac{\alpha}{2}+\gamma}}\right] + \E\left[\int_0^T\|u(t,\cdot)\|^2_{H^{\alpha+\gamma}}\D t \right] \leq C\,\E\left[\|g\|_{H^{\frac{\alpha}{2}+\gamma}}^2\right] +C\, \E\left[ \int_0^T \|f(t,\cdot)\|^2_{H^{\gamma}}\D t \right],
        \end{align*}
     where the constants $\gamma>0$ and $C=C(\alpha,\beta,\gamma,K,m,T)$.  
\end{lemma}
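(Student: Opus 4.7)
The strategy is to interpret $\hat u(\cdot,\xi)$, for each fixed $\xi$, as the $Y$-component of a linear one-dimensional BSDE with $\xi$-dependent coefficients, and then to run a Sobolev-weighted energy estimate. Indeed, the formula defining $\hat u(t,\xi)$ is precisely the $Y$-part of the linear BSDE
\begin{equation*}
\D \hat u(t,\xi) = \bigl[a(t)|\xi|^\alpha \hat u - \hat f(t,\xi) - \sigma(t) \hat v\bigr]\D t + \hat v(t,\xi)\D W_t,\quad \hat u(T,\xi) = \hat g(\xi),
\end{equation*}
as recalled in Remark~\ref{remark664}, with the companion $\hat v(\cdot,\xi)$ supplied by martingale representation. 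By Parseval's identity, the target estimates translate into weighted $L^2(\D\xi)$ bounds.

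Next, I would apply It\^o's formula to $\phi(\xi)|\hat u(t,\xi)|^2$ with weight $\phi(\xi):=(1+|\xi|^2)^{\alpha/2+\gamma}$, integrate in $\xi\in\R$ by Fubini, and rewrite the resulting identity in Sobolev norms. Using $a\geq m$ together with the elementary bound $|\xi|^\alpha\geq (1+|\xi|^2)^{\alpha/2}-1$ (valid for $\alpha\in(1,2]$ by subadditivity of $x\mapsto x^{\alpha/2}$), the coercive drift contributes at least $m\|u\|_{H^{\alpha+\gamma}}^2 - m\|u\|_{H^{\alpha/2+\gamma}}^2$. The cross term $\hat u\hat f$ is split by Young's inequality via the symmetric factorization $\phi = (1+|\xi|^2)^{(\alpha+\gamma)/2}\cdot(1+|\xi|^2)^{\gamma/2}$ into $\varepsilon\|u\|_{H^{\alpha+\gamma}}^2 + \varepsilon^{-1}\|f\|_{H^\gamma}^2$, and the cross term $\sigma\hat u\hat v$ is bounded by $C_K\|u\|_{H^{\alpha/2+\gamma}}^2 + \tfrac12\|v\|_{H^{\alpha/2+\gamma}}^2$. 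Choosing $\varepsilon$ small, taking expectation, absorbing the small contributions, and applying Gronwall's inequality on $[0,T]$ yields the $\E\int_0^T\|u\|_{H^{\alpha+\gamma}}^2\D t$ bound, together with the companion control $\E\int_0^T\|v\|_{H^{\alpha/2+\gamma}}^2\D t\leq C\bigl(\E\|g\|_{H^{\alpha/2+\gamma}}^2+\E\int_0^T\|f\|_{H^\gamma}^2\D t\bigr)$.

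For the $\E\sup_t$ bound, I would take $\sup_{t\in[0,T]}$ in the It\^o identity \emph{before} taking expectation and apply BDG to the stochastic-integral piece $\int_\cdot^T\int_\R 2\phi\,\hat u\,\hat v\,\D\xi\,\D W_s$. Its quadratic variation is dominated by $\int_0^T\|u\|_{H^{\alpha/2+\gamma}}^2\|v\|_{H^{\alpha/2+\gamma}}^2\D s$ via Cauchy--Schwarz in $\xi$, so BDG followed by Young produces $\tfrac12\E\sup_t\|u(t)\|_{H^{\alpha/2+\gamma}}^2 + C\,\E\int_0^T\|v\|_{H^{\alpha/2+\gamma}}^2\D t$; the first term is absorbed into the left-hand side, while the second is already controlled by the previous step.

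The main technical obstacle will be a careful Fubini/approximation argument to justify exchanging $\int_\R\D\xi$ with the It\^o and Lebesgue integrals arising from the pointwise-in-$\xi$ application of It\^o's formula; I expect to handle it by a compact truncation in $\xi$ (or by smooth regularization of $f,g$) and a passage to the limit based on the very a priori bound the estimate produces. A secondary but subtle point is the weight bookkeeping: the coercivity supplied by $|\xi|^\alpha$ exactly matches $H^{\alpha+\gamma}$, and the Young split must be arranged so that the right-hand side contains only $\|f\|_{H^\gamma}^2$ and $\|g\|_{H^{\alpha/2+\gamma}}^2$ rather than higher-order norms.
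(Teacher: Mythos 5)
Your proposal is correct, but it proves the lemma by a genuinely different route from the paper. The paper never touches It\^o's formula here: it works directly with the closed-form conditional expectation $\hat u(t,\xi)=\E[\Gamma_{T,t}(\xi)\hat g(\xi)+\int_t^T\Gamma_{s,t}(\xi)\hat f(s,\xi)\D s\,|\,\mathcal F_t]$, factors $\Gamma_{s,t}(\xi)=\hat\Gamma_{s,t}(\xi)\bar\Gamma_{s,t}$ into a deterministic decaying kernel and a stochastic exponential, and extracts the gain of derivatives from the pointwise smoothing bound $\int_0^T|\hat\Gamma_{s,t}(\xi)|^p\,\D t+\int_0^T|\hat\Gamma_{s,t}(\xi)|^p\,\D s\le T\wedge(C_p|\xi|^{-\alpha})$ combined with Cauchy--Schwarz in the time variable; the $\sup_t$ bound comes from Doob's maximal inequality applied to the conditional-expectation martingales, and the boundedness of $\sigma$ enters only through moment bounds on $\sup_s\bar\Gamma_{s,t}$. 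Your energy-estimate argument --- It\^o's formula for $\phi(\xi)|\hat u(t,\xi)|^2$ with $\phi=(1+|\xi|^2)^{\alpha/2+\gamma}$, coercivity of $a(t)|\xi|^\alpha$ via $|\xi|^\alpha\ge(1+|\xi|^2)^{\alpha/2}-1$, the symmetric Young split of the $\hat u\hat f$ term (whose exponent bookkeeping does check out), Gronwall, and BDG for the supremum --- is the standard variational/Gelfand-triple estimate and is sound; it has the advantages of not relying on the explicit solution formula (hence being more robust to $x$-dependent coefficients) and of delivering the companion bound $\E\int_0^T\|v\|^2_{H^{\alpha/2+\gamma}}\D t$ for free, which the paper instead derives separately in the subsequent theorem via a scalar BSDE estimate. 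The price is exactly the technical debt you flag: you must first realize $\hat u(\cdot,\xi)$ as the $Y$-component of the linear BSDE together with a martingale-representation $\hat v(\cdot,\xi)$ (as in Remark~\ref{remark664}), handle the complex-valued It\^o formula for $\hat u\overline{\hat u}$, and justify the $\xi$-Fubini by truncation to $|\xi|\le R$ (where the driver's Lipschitz constant $a(t)|\xi|^\alpha$ is bounded, so all truncated quantities are finite) before passing $R\to\infty$; none of these steps presents a genuine obstruction.
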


\begin{proof}
Since $\sigma$ is bounded, it holds that for any $p>1$
\begin{align}\label{eq1384}
    \sup_{0\leq t\leq T}\E\left[\sup_{ t<s\leq T}|\bar{\Gamma}_{s,t}|^p\Big|\mathcal{F}_t\right]<\infty.
\end{align}

It follows from the explicit structure~\eqref{eq2494} that
\begin{align*}
    \E\left[\sup\limits_{t\in[0,T]}\|u(t,\cdot)\|^2_{H^{\frac{\alpha}{2}+\gamma}}\right]\leq &C\int_\R\left(1+|\xi|^{\alpha+2\gamma}\right) \E \left[ \sup_{0\leq t\leq T}\left|\E\left[\Gamma_{T,t}(\xi)\hat{g}(\xi) \big| \mathcal{F}_t\right]\right|^2 \right] \D \xi\\
&+C\int_\R\left(1+|\xi|^{\alpha+2\gamma}\right) \E \left[\sup_{0\leq t\leq T}\left|\E\Big[\int_t^T\Gamma_{s,t}(\xi)\hat{f}(s,\xi)\D s \Big| \mathcal{F}_t\Big]\right|^2 \right] \D \xi.
\end{align*}
With the help of inequality~\eqref{eq1384} and Cauchy's inequality, we get
\begin{align*}
   \left| \E\Big[\int_t^T\Gamma_{s,t}(\xi)\hat{f}(s,\xi)\D s \Big| \mathcal{F}_t\Big] \right| &\leq \E\Big[\sup_{t\leq s\leq T}\bar{\Gamma}_{s,t} \int_t^T \hat{\Gamma}_{s,t}(\xi)|\hat{f}(s,\xi)| \D s\Big|\mathcal{F}_t\Big]\\
   &\leq \E\Big[\sup_{t\leq s\leq T}|\bar{\Gamma}_{s,t}|^4\Big|\mathcal{F}_t\Big]^{\frac{1}{4}} \E\Big[ \left( \int_t^T \hat{\Gamma}_{s,t}(\xi)|\hat{f}(s,\xi)| \D s \right)^\frac{4}{3}\Big|\mathcal{F}_t\Big]^{\frac{3}{4}}\\
   &\leq C \E\Big[ \left( \int_t^T \hat{\Gamma}_{s,t}(\xi) \D s \right)^\frac{2}{3} \left( \int_t^T \hat{\Gamma}_{s,t}(\xi)|\hat{f}(s,\xi)|^2 \D s \right)^\frac{2}{3} \Big|\mathcal{F}_t\Big]^{\frac{3}{4}}.
\end{align*}

Note that for all $p>0$, there exists constant $C_p$ such that
\begin{align}\label{eq1413}
     \int_0^T|\hat{\Gamma}_{s,t}(\xi)|^p\D t+ \int_0^T|\hat{\Gamma}_{s,t}(\xi)|^p\D s\leq T \wedge (C_p |\xi|^{-\alpha}).
\end{align}
This leads to
\begin{align*}
     \E\Big[\int_t^T\Gamma_{s,t}(\xi)\hat{f}(s,\xi)\D s \Big| \mathcal{F}_t\Big] &\leq C\left(1\wedge |\xi|^{-\frac{\alpha}{2}}\right)\E\Big[ \left( \int_t^T \hat{\Gamma}_{s,t}(\xi)|\hat{f}(s,\xi)|^2 \D s \right)^\frac{2}{3} \Big|\mathcal{F}_t\Big]^{\frac{3}{4}},
\end{align*}
which together with Doob's inequality, implies that
\begin{align}
&\int_\R\left(1+|\xi|^{\alpha+2\gamma}\right)\E\left[\sup_{0\leq t\leq T}\Big|\E\Big[\int_t^T\Gamma_{s,t}(\xi)\hat{f}(s,\xi)\D s \Big| \mathcal{F}_t\Big]\Big|^2\right]\D \xi\nonumber\\
\leq &C \int_\R\left(1+|\xi|^{2\gamma}\right)\E\left[\sup_{0\leq t\leq T}\Big|\E\Big[ \left( \int_0^T|\hat{f}(s,\xi)|^2\D s \right)^{\frac{2}{3}} \Big| \mathcal{F}_t\Big]\Big|^\frac{3}{2}\right]\D \xi\nonumber\\
\leq &C \,\int_\R\left(1+|\xi|^{2\gamma}\right)\E\left[\int_0^T|\hat{f}(s,\xi)|^2\D s \right]\D \xi\leq C\E\int_0^T\|f(s,\cdot)\|^2_{H^\gamma}\D s.\label{eq1421}
\end{align}
In the same spirit, we obtain
\begin{align*}
&\int_\R\left(1+|\xi|^{\alpha+2\gamma}\right)\E \left[ \sup_{0\leq t\leq T}\left|\E\left[\Gamma_{T,t}(\xi)\hat{g}(\xi) \big| \mathcal{F}_t\right]\right|^2 \right] \D \xi\\
\leq & C\,\E\int_{\R}(1+|\xi|^{\alpha+2\gamma})|\hat{g}(\xi)|^2\D \xi\leq C\, \E\left[\|g\|^2_{H^{\frac{\alpha}{2}+\gamma}}\right].
\end{align*}
This, together with~\eqref{eq1421}, proves the upper bound of $\E\left[\sup\limits_{t\in[0,T]}\|u(t,\cdot)\|^2_{H^{\frac{\alpha}{2}+\gamma}}\right]$. 

To prove the upper bound of $\E\left[\int_0^T\|u(t,\cdot)\|^2_{H^{\alpha+\gamma}}\D t\right]$, we notice that
\begin{align*}
    |\hat{u}(t,\xi)|^2&\leq 2\left|\E\left[{\Gamma}_{T,t}(\xi)\hat{g}(\xi)\big| \mathcal{F}_t\right]\right|^2+2\left|\E\left[\sup_{t\leq s\leq T}\bar{\Gamma}_{s,t} \int_t^T\hat{\Gamma}_{s,t}(\xi)|\hat{f}(s,\xi)|\D s\Big| \mathcal{F}_t\right] \right|^2\\
    &\leq 2\E\left[\left|\bar{\Gamma}_{T,t}\right|^2\big| \mathcal{F}_t\right]\cdot\E\left[\big|{\hat{\Gamma}}_{T,t}(\xi)\hat{g}(\xi)\big|^2\big| \mathcal{F}_t\right]\\
    &\quad+2\E\left[\big|\sup_{t\leq s\leq T}\bar{\Gamma}_{s,t}\big|^2\big| \mathcal{F}_t\right]\cdot\E\left[ \left( \int_t^T\hat{\Gamma}_{s,t}(\xi)|\hat{f}(s,\xi)|\D s \right)^2 \Big| \mathcal{F}_t\right] \\
    &\leq C\,\E\left[\big|{\hat{\Gamma}}_{T,t}(\xi)\hat{g}(\xi)\big|^2\big| \mathcal{F}_t\right]+C\,\E\left[ \left( \int_t^T\hat{\Gamma}_{s,t}(\xi)|\hat{f}(s,\xi)|\D s \right)^2 \Big| \mathcal{F}_t\right],
\end{align*}
where the third inequality comes from the fact~\eqref{eq1384}. Then
\begin{align*}
    &\E\left[\int_0^T\|u(t,\cdot)\|^2_{H^{\alpha+\gamma}}\D t\right] \leq C\,\E \left[ \int_0^T \int_{\R} \left( 1 + |\xi|^{2\alpha+2\gamma} \right) |\hat{u}(t,\xi)|^2 \D \xi\D t \right]\\
    \leq & C\,\int_0^T\int_{\R}\left(1+|\xi|^{2\alpha+2\gamma}\right)\E\left[\big|{\hat{\Gamma}}_{T,t}(\xi)\hat{g}(\xi)\big|^2+ \left(\int_t^T\hat{\Gamma}_{s,t}(\xi)|\hat{f}(s,\xi)|\D s \right)^2 \right]\D \xi\D t.
\end{align*}
Using the fact~\eqref{eq1413} again, we immediately get
\begin{align*}    &\int_0^T\int_{\R}\left(1+|\xi|^{2\alpha+2\gamma}\right)\E\left[\big|{\hat{\Gamma}}_{T,t}(\xi)\hat{g}(\xi)\big|^2\right]\D \xi\D t\\
\leq & \E\left[\sup_{\xi\in\R}\left\{\left(1+|\xi|^{\alpha}\right)\int_0^T |\hat{\Gamma}_{T,t}(\xi)|^2\D t\right\}\cdot \int_{\R}\left(1+|\xi|^{2\gamma+\alpha}\right)|\hat{g}(\xi)|^2\D \xi\right]
\leq C\,\E\left[\|g\|^2_{H^{\gamma+\frac{\alpha}{2}}}\right],
\end{align*}
and by Fubini's Theorem, we have
\begin{align*}
&\int_0^T\int_{\R}\left(1+|\xi|^{2\alpha+2\gamma}\right)\E\left[ \left( \int_t^T\hat{\Gamma}_{s,t}(\xi)|\hat{f}(s,\xi)|\D s \right)^2 \right]\D \xi\D t\\
\leq& \int_0^T \int_{\R}\left(1+|\xi|^{2\alpha+2\gamma}\right)\E\left[\int_t^T\hat{\Gamma}_{s,t}(\xi)\D s\int_t^T\hat{\Gamma}_{s,t}(\xi)\big|\hat{f}(s,\xi)\big|^2\D s\right]\D \xi\D t\\
\leq & \int_0^T \int_{\R}\left(1+|\xi|^{\alpha+2\gamma}\right)\E\left[\int_t^T\hat{\Gamma}_{s,t}(\xi)\big|\hat{f}(s,\xi)\big|^2\D s\right]\D \xi\D t\\
 =&\int_0^T \int_{\R}\left(1+|\xi|^{\alpha+2\gamma}\right)\E\left[\int_0^s\hat{\Gamma}_{s,t}(\xi)\D t\big|\hat{f}(s,\xi)\big|^2\right]\D \xi\D s\leq C\,\E\int_0^T \|f(s,\cdot)\|^2_{H^\gamma}\D s.
\end{align*}
Thus we finally complete our proof.
\end{proof}

\begin{theorem}
 Let Assumptions~\ref{Baisc assumption} and~\ref{Simple assumption} hold. If $(f,g)\in \mathbb{H}^\gamma \times L^2(\Omega,H^\gamma)$, then BSPDE~\eqref{linear-case} has a unique solution $(u,v)\in \left[ \mathcal{S}^2(0,T;H^{\frac{\alpha}{2}+\gamma})\cap\mathbb{H}^{\alpha+\gamma}(T) \right] \times \mathbb{H}^{\frac{\alpha}{2}+\gamma}(T)$ such that
    \begin{align*}
    &\E\left[\sup_{t\in[0,T]}\|u(t,\cdot)\|^2_{H^{\frac{\alpha}{2}+\gamma}}\right] + \E\left[ \int_0^T \|v(t,\cdot)\|^2_{H^{\frac{\alpha}{2}+\gamma}} \D t \right] + \E\left[ \int_0^T \|v(t,\cdot)\|^2_{H^{\frac{\alpha}{2}+\gamma}} \D t \right]\\
    \leq& C\,\E\left[\|g\|_{H^{\frac{\alpha}{2}+\gamma}}^2\right]+C\,\E \left[ \int_0^T\|f(t,\cdot)\|^2_{H^{\gamma}}\D t \right],
        \end{align*}
     where $\gamma>0$ and $C=C(\alpha,\beta,\gamma,K,m,T)$.  
\end{theorem}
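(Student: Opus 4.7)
The plan is to build the solution in Fourier variables $\xi$, leveraging Lemma~\ref{lemma1385} for the bound on $u$ and deriving the bound on $v$ via an It\^o energy argument in Fourier space with carefully chosen $\xi$-dependent Young's weights. First, I would define $u$ via the inverse Fourier transform of $\hat{u}(t,\xi)$ given by~\eqref{eq2494}; Lemma~\ref{lemma1385} then immediately yields $u\in \mathcal{S}^2(0,T;H^{\frac{\alpha}{2}+\gamma})\cap \mathbb{H}^{\alpha+\gamma}(T)$ with the first two terms of the desired estimate under control.

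To construct $v$, for each $\xi\in\R$ observe that $\hat{u}(\cdot,\xi)$ is the first component of the unique solution to the one-dimensional linear BSDE
\begin{align*}
 \D \hat{u}(t,\xi) = \bigl[a(t)|\xi|^{\alpha}\hat{u}(t,\xi) - \hat{f}(t,\xi) - \sigma(t)\hat{v}(t,\xi)\bigr]\D t + \hat{v}(t,\xi)\D W_t,\qquad \hat{u}(T,\xi)=\hat{g}(\xi);
\end{align*}
the martingale representation theorem, applied jointly measurably in $(\omega,\xi)$, yields a measurable $\hat{v}(t,\xi)$, and I would set $v(t,x):=\mathcal{F}^{-1}(\hat{v}(t,\cdot))(x)$ once its $L^2(\R)$-valued integrability is verified \emph{a posteriori} from the energy estimate below. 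Joint measurability and a stochastic Fubini argument, combined with Parseval's identity against test functions $\phi\in C_c^{\infty}(\R)$, will confirm that $(u,v)$ indeed satisfies~\eqref{linear-case} weakly.

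The crucial step is the estimate on $v$. Applying It\^o's formula to $|\hat{u}(t,\xi)|^2=\hat{u}(t,\xi)\overline{\hat{u}(t,\xi)}$, integrating over $[0,T]$ and taking expectation yields
\begin{align*}
\E\int_0^T|\hat{v}|^2\D s + 2\E\int_0^T a(s)|\xi|^{\alpha}|\hat{u}|^2\D s = \E|\hat{g}(\xi)|^2 - |\hat{u}(0,\xi)|^2 + 2\E\int_0^T \operatorname{Re}(\overline{\hat{u}}\hat{f})\D s + 2\E\int_0^T \sigma\operatorname{Re}(\overline{\hat{u}}\hat{v})\D s.
\end{align*}
The cross terms will be handled by $2K|\hat{u}||\hat{v}|\leq \tfrac{1}{2}|\hat{v}|^2 + 2K^2|\hat{u}|^2$ and, crucially, the $\xi$-dependent Young inequality
\begin{align*}
 2|\hat{u}||\hat{f}| \leq m\bigl(1+|\xi|^{\alpha}\bigr)|\hat{u}|^2 + \frac{|\hat{f}|^2}{m\bigl(1+|\xi|^{\alpha}\bigr)}.
\end{align*}
After absorbing $\tfrac{1}{2}|\hat{v}|^2$ and $m|\xi|^{\alpha}|\hat{u}|^2$ into the left-hand side (using $a\geq m$), multiplying by $(1+|\xi|^{\alpha+2\gamma})$ and integrating in $\xi\in\R$, the key observation
$$(1+|\xi|^{\alpha+2\gamma})/(1+|\xi|^{\alpha})\leq C(1+|\xi|^{2\gamma})$$
converts the $\hat{f}$-term into exactly $C\|f\|_{H^{\gamma}}^2$. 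Combined with Lemma~\ref{lemma1385} to dispose of the residual $\|u\|_{H^{\frac{\alpha}{2}+\gamma}}^2$ term on the right, this delivers the remaining $\E\int_0^T\|v\|_{H^{\frac{\alpha}{2}+\gamma}}^2\D t$ bound.

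Uniqueness follows by applying the same energy estimate to the difference of two solutions with zero data, which forces both components to vanish in their respective norms. The main obstacle is the calibration of the $\xi$-dependent weight in Young's inequality: any constant-weight splitting would inflate the required regularity of $f$ from $H^{\gamma}$ to $H^{\frac{\alpha}{2}+\gamma}$, so the precise weight $1+|\xi|^{\alpha}$, tuned to the leading term $a|\xi|^{\alpha}|\hat{u}|^2$ coming from It\^o's formula, is exactly what matches the stated hypothesis $f\in\mathbb{H}^{\gamma}$ and closes the estimate.
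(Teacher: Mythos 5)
Your proposal follows essentially the same route as the paper: construct $(\hat{u},\hat{v})$ for each $\xi$ as the solution of the linear BSDE~\eqref{eq2409021}, invoke Lemma~\ref{lemma1385} for the $u$-bounds, and derive the $v$-bound from the $\xi$-weighted BSDE energy estimate $\E\int_0^T|\hat{v}|^2\D t\leq C\,\E\int_0^T\big[(1+|\xi|^\alpha)|\hat{u}|^2+(1+|\xi|^\alpha)^{-1}|\hat{f}|^2\big]\D t+C\,\E|\hat{g}|^2$ before multiplying by $(1+|\xi|^{\alpha+2\gamma})$ and integrating — your It\^o computation with the weight $m(1+|\xi|^\alpha)$ is exactly the derivation of what the paper cites as the ``standard BSDE estimate.'' The only point the paper treats that you pass over is the verification that $u$ actually lies in $\mathcal{S}^2(0,T;H^{\frac{\alpha}{2}+\gamma})$ as a strongly continuous process attaining the terminal datum, i.e. $\lim_{t\to T}\E\|u(t,\cdot)-g\|_{H^{\frac{\alpha}{2}+\gamma}}=0$, which the paper establishes by dominated convergence; this is a routine but non-vacuous step you should add.
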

\begin{proof}
    By the regularity of $(f,g)$, we know that for almost every $\xi\in\R$,
    \begin{align*}
       \E \left[ \int_0^T |\hat{f}(t,\xi)|^2 \D t \right] +\E\left[|g(\xi)|^2\right]<+\infty,
    \end{align*}
    which yields that BSDE~\eqref{eq2409021} has an unique solution $(\hat{u},\hat{v})$. By the standard BSDE estimates, we know that
    \begin{align*}
        \E\left[\int_0^T|\hat{v}(t,\xi)|^2\D t\right]\leq C\,\E\left[\int_0^T(1+|\xi|^\alpha)|\hat{u}(t,\xi)|^2+(1+|\xi|^\alpha)^{-1}|\hat{f}(t,\xi)|^2\D t\right]+C\,\E\left[|\hat{g}(\xi)|^2\right].
    \end{align*}
    Combing this with estimates in Lemma \ref{lemma1385}, we know that the Fourier inverse transformation of $(\hat{u},\hat{v})$ is well-defined and satisfies
       \begin{align*}
\E\left[\sup_{t\in[0,T]}\|u\|^2_{H^{\frac{\alpha}{2}+\gamma}}\right]+\E \left[ \int_0^T\left(\|u\|^2_{H^{\alpha+\gamma}}+\|v\|^2_{H^{\frac{\alpha}{2}+\gamma}}\right)\D t \right] \leq C\,\E\left[\|g\|_{H^{\frac{\alpha}{2}+\gamma}}^2\right]+C\,\E \left[ \int_0^T\|f\|^2_{H^{\gamma}}\D t \right].
        \end{align*}
        
To prove that $(u,v)$ is the (strong) solution to BSPDE~\eqref{linear-case}, we need to prove $u$ is continuous at the boundary $t=T$, i.e.,
\begin{align*}
    \lim_{t\rightarrow T}\E\left[\|u(t,x)-g(x)\|_{H^{\frac{\alpha}{2}+\gamma}}\right]=0,
\end{align*}
which is equivalent to
\begin{align}\label{eq1487}
     \lim_{t\rightarrow T}\int_{\R}(1+|\xi|^{\alpha+2\gamma})\E\left[\bigg|\E\Big[\Gamma_{T,t}(\xi)\hat{g}(\xi)+\int_t^T\Gamma_{s,t}(\xi)\hat{f}(s,\xi)\D s \Big| \mathcal{F}_t\Big]-\hat{g}(\xi)\bigg|^2\right]\D \xi=0.
\end{align}
Notice that
\begin{align*}
&\frac{1}{3}\E\left[\bigg|\E\Big[\Gamma_{T,t}(\xi)\hat{g}(\xi)+\int_t^T\Gamma_{s,t}(\xi)\hat{f}(s,\xi)\D s \Big| \mathcal{F}_t\Big]-\hat{g}(\xi)\bigg|^2\right]\\
\leq &
\E\left[\bigg|\E\Big[\left(\Gamma_{T,t}(\xi)-1\right)\hat{g}(\xi)\Big| \mathcal{F}_t\Big]\bigg|^2\right]+\E\left[\bigg|\E\Big[\hat{g}(\xi)\Big| \mathcal{F}_t\Big]-\hat{g}(\xi)\bigg|^2\right] + \E \left[ \bigg| \E \Big[ \int_t^T \Gamma_{s,t}(\xi) \hat{f}(s,\xi)\D s \Big| \mathcal{F}_t\Big]\bigg|^2\right].
\end{align*}
Then~\eqref{eq1487} can be shown by dominated convergence theorem.
\end{proof}

\subsection{ Preliminary properties of fractional Laplacian}
Similar to classical Hölder spaces of finite-dimensional vector-valued functions, we have the following propositions about how fractional Laplacian interacts with $C^{\alpha}$-norms. The proof can be completed in the same spirit as classical ones (see e.g. proposition 2.7 in \cite{silvestre2007regularity}), so we omitted it here.
\begin{lemma}\label{delta Hölder}
Let $u\in C^{\alpha+\beta}({\mathbb{\R},\mathcal{L}^2(0,T)})$. For each $\alpha\in(0,+\infty)$ and $\beta\in(0,+\infty)\backslash \mathbb{N}$, we have $(-\Delta)^{\frac{\alpha}{2}}u\in  C^{\beta}({\mathbb{\R},\mathcal{L}^2(0,T)})$ and there exists $C=C(\alpha,\beta)$ such that
\begin{align*}
      \|(-\Delta)^{\frac{\alpha}{2}}u\|_{\beta,\mL^2}\leq C\|u\|_{\alpha+\beta,\mL^2} .
\end{align*}
\end{lemma}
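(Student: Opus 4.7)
The plan is to mimic Silvestre's classical pointwise proof (Proposition~2.7 of \cite{silvestre2007regularity}) and lift it to the Banach-space-valued setting with $X=\mathcal{L}^2(0,T)$. The lifting is essentially cost-free: because $\|\cdot\|_{\mathcal{L}^2}$ is itself an integral norm over $\Omega\times[0,T]$, Minkowski's inequality lets it commute with every spatial integral appearing in the definition of $(-\Delta)^{\alpha/2}$, so pointwise Hölder inequalities upgrade verbatim, with the same constants, to $\mathcal{L}^2$-valued Hölder inequalities.

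\textbf{Reduction to a model range.} Write $\alpha=2m+\alpha'$ with $m\in\mathbb{N}\cup\{0\}$ and $\alpha'\in[0,2)$, and $\beta=n+\beta'$ with $n\in\mathbb{N}\cup\{0\}$ and $\beta'\in(0,1)$ (which is possible since $\beta\notin\mathbb{N}$). Using the factorisation $(-\Delta)^{\alpha/2}=(-1)^{m}D^{2m}\circ(-\Delta)^{\alpha'/2}$ together with the commutation $D\circ(-\Delta)^{\alpha'/2}=(-\Delta)^{\alpha'/2}\circ D$, the full statement reduces to
\[
[(-\Delta)^{\alpha'/2}v]_{\beta',\mathcal{L}^2}+\|(-\Delta)^{\alpha'/2}v\|_{0,\mathcal{L}^2}\le C\,\|v\|_{\alpha'+\beta',\mathcal{L}^2},
\]
applied to $v:=D^{2m+n}u\in C^{\alpha'+\beta'}(\R,\mathcal{L}^2(0,T))$. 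The degenerate case $\alpha'=0$ is trivial because $(-\Delta)^{0}$ is the identity, leaving the model case $\alpha'\in(0,2)$, $\beta'\in(0,1)$.

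\textbf{The model case.} Use the symmetric representation
\[
(-\Delta)^{\alpha'/2}v(x)=-\frac{C_{\alpha'}}{2}\int_{\R}\frac{v(x+y)+v(x-y)-2v(x)}{|y|^{1+\alpha'}}\D y,
\]
which is valid for every $\alpha'\in(0,2)$ since the linear $y$-term cancels by symmetry and no principal value is needed. For $x\ne\bar x$ set $\eta=|x-\bar x|$ and $F(x,y):=v(x+y)+v(x-y)-2v(x)$; split $\R=\{|y|<2\eta\}\cup\{|y|\ge 2\eta\}$ and bring $\|\cdot\|_{\mathcal{L}^2}$ inside the spatial integral via Minkowski. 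In the near region, estimate $F(x,y)$ and $F(\bar x,y)$ separately by Taylor's formula in $y$ truncated at order $\lfloor\alpha'+\beta'\rfloor$, with remainder controlled by the Hölder seminorm of $D^{\lfloor\alpha'+\beta'\rfloor}v$. In the far region, expand $F(x,y)-F(\bar x,y)$ along the segment from $\bar x$ to $x$ to extract a factor $\eta$, then Taylor-expand the relevant derivative of $v$ in $y$. A direct case-by-case computation, according to $\alpha'+\beta'\in(0,1)$, $(1,2)$, or $(2,3)$, shows that both integrals are $O(\eta^{\beta'}\|v\|_{\alpha'+\beta',\mathcal{L}^2})$. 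The sup-norm bound $\|(-\Delta)^{\alpha'/2}v\|_{0,\mathcal{L}^2}$ follows from the same decomposition, splitting at the scale $|y|=1$ and using $\|v\|_{0,\mathcal{L}^2}$ on $\{|y|\ge 1\}$.

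\textbf{Main obstacle.} The delicate range is $\alpha'+\beta'\in(2,3)$, where the near-region estimate requires a second-order Taylor expansion of $F$ in $y$, while the far region forces one to combine a first-order expansion in $x$ with a second-order expansion of $Dv$ in $y$; one must verify that the resulting powers of $\eta$ and $|y|$ balance precisely to produce $\eta^{\beta'}$, which is where the condition $\beta'<1$ (equivalently $\beta\notin\mathbb{N}$) is used in an essential way. Apart from this bookkeeping, the argument is entirely standard, and nothing in the Banach-space setting adds any real difficulty thanks to the systematic use of Minkowski.
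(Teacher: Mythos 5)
Your strategy is the one the paper itself points to: the authors omit the proof of this lemma and refer to Proposition~2.7 of Silvestre, and your plan --- reduce to the model range $\alpha'\in(0,2)$, $\beta'\in(0,1)$ via $(-\Delta)^{\alpha/2}=(-1)^mD^{2m}\circ(-\Delta)^{\alpha'/2}$, represent $(-\Delta)^{\alpha'/2}$ by the symmetric second difference, split at $|y|=2\eta$, and transfer every pointwise inequality to the $\mathcal{L}^2(0,T)$-valued setting by Minkowski --- is the standard adaptation and is essentially sound.

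One step would fail as literally written, and it occurs in exactly the range the paper needs ($\alpha+\beta\in(2,3)$). In the near region $\{|y|<2\eta\}$ with $\alpha'+\beta'\in(2,3)$ you propose to ``estimate $F(x,y)$ and $F(\bar x,y)$ separately by Taylor's formula truncated at order $2$''. The symmetrization in $F(x,y)=v(x+y)+v(x-y)-2v(x)$ only annihilates the odd first-order term; the quadratic term survives, so the separate bound is $\|F(x,y)\|_{\mathcal{L}^2}\le\|D^2v(x)\|_{\mathcal{L}^2}|y|^2+C[D^2v]_{\gamma,\mathcal{L}^2}|y|^{2+\gamma}$ with $\gamma=\alpha'+\beta'-2$, and integrating $|y|^{2}|y|^{-1-\alpha'}$ over $|y|<2\eta$ produces a term of order $\eta^{2-\alpha'}$, which dominates $\eta^{\beta'}$ as $\eta\to0$ because $2-\alpha'<\beta'$. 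The correct (standard) fix must be stated: subtract the quadratic Taylor polynomial from both $F(x,\cdot)$ and $F(\bar x,\cdot)$, estimate only the remainders separately, and pair the two polynomial contributions, which then yield $c\,\|D^2v(x)-D^2v(\bar x)\|_{\mathcal{L}^2}\,\eta^{2-\alpha'}\le C[D^2v]_{\gamma,\mathcal{L}^2}\,\eta^{\gamma+2-\alpha'}=C[D^2v]_{\gamma,\mathcal{L}^2}\,\eta^{\beta'}$. With this correction (and a one-line treatment of the boundary cases $\alpha'+\beta'=1,2$, which your trichotomy omits but which are covered by the cruder bounds $\|F(x,y)\|_{\mathcal{L}^2}\le C\|Dv\|_{0,\mathcal{L}^2}|y|$, respectively $C\|D^2v\|_{0,\mathcal{L}^2}|y|^2$), the argument closes; your far-region estimate, where $\beta'<1$ enters through the convergence of $\int_{|y|\ge 2\eta}|y|^{\gamma-\alpha'}\D y$, and your sup-norm estimate are correct as described.
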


To study the properties of $G_{t,x}(x)$, we define
\begin{align}\label{defG}
    G(x)=\frac{1}{2\pi} \int_{\mathbb{R}}\exp\Big(-i\xi x-|\xi|^{\alpha}\Big)\D \xi.
\end{align}
Its derivative $D^k G(x)$ plays an important role in our proof and has the following polynomial decay property, which indicates $D^k G(x)$ is integrable. 
\begin{lemma}\label{Lemma G bound}
For each $ k\in \mathbb{N}$ and $\gamma>0$, there exists $C=C(\alpha,k,\gamma)$ such that
\begin{equation}\label{G bound} 
    \left|D^k G(x)\right|\leq \frac{C}{1+|x|^{1+\alpha+k}},
\end{equation}
and
\begin{equation}\label{delta G bound} 
\left|(-\Delta)^{\frac{\gamma}{2}}G(x)\right|\leq \frac{C}{1+|x|^{1+\gamma}}.
\end{equation}
\end{lemma}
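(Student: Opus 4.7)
My plan is to exploit the subordination representation of the $\alpha$-stable kernel. Writing
$$e^{-|\xi|^\alpha} = \int_0^\infty e^{-s |\xi|^2}\, \nu_{\alpha/2}(\D s),$$
where $\nu_{\alpha/2}$ is the law at time one of the one-sided $\alpha/2$-stable subordinator (and $\delta_1$ when $\alpha=2$, reducing everything to the Gaussian case), Fubini gives
$$G(x) = \int_0^\infty p_s(x)\, \nu_{\alpha/2}(\D s), \qquad p_s(x):=(4\pi s)^{-1/2} e^{-x^2/(4s)}.$$
Since each $p_s$ is Schwartz, $D^k p_s$ and $(-\Delta)^{\gamma/2} p_s$ satisfy explicit Gaussian-type estimates, and dominated convergence justifies passing both $D^k$ and $(-\Delta)^{\gamma/2}$ under the integral in $s$.

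For $|x|\leq 1$, the estimates~\eqref{G bound} and~\eqref{delta G bound} reduce to uniform boundedness of $|D^k G|$ and $|(-\Delta)^{\gamma/2} G|$, which follows directly from the Fourier representation of $G$ since $|\xi|^k e^{-|\xi|^\alpha}$ and $|\xi|^\gamma e^{-|\xi|^\alpha}$ are integrable on $\R$. The substance of the lemma therefore lies in the regime $|x|>1$.

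For $|x|>1$, I would use the Gaussian bound $|D^k p_s(x)| \leq C_k s^{-(1+k)/2} e^{-x^2/(16 s)}$ (absorbing the Hermite polynomial into the exponent via a slightly worse Gaussian constant) together with the scaling-based bound $|(-\Delta)^{\gamma/2}p_s(x)| \leq C_\gamma s^{-(1+\gamma)/2}(1 + |x|^2/s)^{-(1+\gamma)/2}$, the latter obtained by rescaling from the known bound for $(-\Delta)^{\gamma/2} p_1$. The task then reduces to bounding integrals of the form $\int_0^\infty s^{-(1+k)/2} e^{-x^2/(16s)}\, \nu_{\alpha/2}(\D s)$ (and the parallel fractional-Laplacian version), which I would split at $s=|x|^2$. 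For $s>|x|^2$ the Gaussian factor is at most $1$ and the tail estimate $\nu_{\alpha/2}((s,\infty)) \leq C s^{-\alpha/2}$ produces the rate $|x|^{-1-\alpha-k}$ directly; for $s\leq |x|^2$, the super-polynomial inequality $e^{-u}\leq C_N u^{-N}$ applied with $N$ slightly larger than $(1+k+\alpha)/2$, combined with the same tail asymptotic of $\nu_{\alpha/2}$, produces the matching rate. An entirely analogous computation handles \eqref{delta G bound}.

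The main obstacle in a direct Fourier integration-by-parts approach is that $e^{-|\xi|^\alpha}$ is only $C^\alpha$-regular at the origin, so one cannot integrate by parts beyond $\lfloor\alpha\rfloor$ times without generating non-integrable singularities at $\xi=0$. The subordination identity sidesteps this by representing the non-smooth Fourier symbol as a $\nu_{\alpha/2}$-average of smooth Gaussian symbols, with the non-smoothness entirely transferred to the heavy tail of $\nu_{\alpha/2}$.
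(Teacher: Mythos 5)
Your proof is correct, but it follows a genuinely different route from the paper's. The paper does not prove \eqref{G bound} at all — it cites Corollary 1 of Debbi--Dozzi — and it establishes \eqref{delta G bound} by a complex-analytic argument: rotating the integration ray in $\int_0^\infty \xi^\gamma e^{-i\xi x-\xi^\alpha}\,\D\xi$ to $\{re^{-i\pi/(2\alpha)}\}$ via Cauchy's theorem so that the oscillatory factor becomes exponentially decaying, then substituting $\lambda=\xi x$ to read off the rate $|x|^{-1-\gamma}$. Your subordination argument instead treats both bounds by one mechanism, and the computation itself is sound: the split at $s=|x|^2$, the tail bound $\nu_{\alpha/2}((t,\infty))\leq C\min(1,t^{-\alpha/2})$, and the choice $N>(1+k+\alpha)/2$ do combine to give the rate $|x|^{-1-\alpha-k}$ in \eqref{G bound} and $|x|^{-1-\gamma}$ in \eqref{delta G bound}, and the reduction of the regime $|x|\le 1$ to uniform boundedness via integrability of $|\xi|^k e^{-|\xi|^\alpha}$ is exactly right. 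What your approach buys is a unified, dimension-independent treatment that also recovers the derivative bound the paper only cites; what it costs is reliance on two external inputs that each need their own justification or citation: the tail asymptotic of the one-sided $\alpha/2$-stable subordinator, and — more importantly — the decay $|(-\Delta)^{\gamma/2}p_1(y)|\leq C(1+|y|)^{-1-\gamma}$ for the Gaussian, which is a statement of exactly the same type as \eqref{delta G bound} itself (it follows from the singular-integral representation \eqref{pointwise formula} applied to a Schwartz function, but you should say so). You should also note that the interchange of $(-\Delta)^{\gamma/2}$ with the $\nu_{\alpha/2}$-integral is most cleanly justified on the Fourier side, using that all negative moments $\int_0^\infty s^{-(1+\gamma)/2}\nu_{\alpha/2}(\D s)$ are finite because the subordinator density vanishes to all orders at $s=0^+$; ``dominated convergence'' alone is a bit terse there. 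The paper's contour argument is more self-contained for \eqref{delta G bound}, while yours generalizes more readily.
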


\begin{proof}
The inequality~\eqref{G bound} has been proved in Corollary 1,\cite{debbi2005solutions}. It is sufficient to prove the~\eqref{delta G bound} when $x>0$. By definition of $ (-\Delta)^{\frac{\gamma}{2}}G(x)$, we have
    \begin{align*}
        (-\Delta)^{\frac{\gamma}{2}}G(x)=&\frac{1}{2\pi}\int_{\R} |\xi|^{\gamma}\exp\left(-i\xi x-|\xi|^{\alpha}\right)\D \xi= \frac{1}{\pi}\mathrm{Re}\left\{\int_{0}^{+\infty} \xi^{\gamma}\exp\left(-i\xi x-\xi^{\alpha}\right)\D \xi\right\}.
    \end{align*}
Let $0<r\leq R<\infty$, and define the curve 
$$
C=[r,R]\vee \left\{Re^{-i\theta}:0
\leq \theta\leq  \frac{\pi}{2\alpha}\right\}\vee \{\lambda e^{-i\frac{\pi}{2\alpha}}:r\leq\lambda\leq R\}\vee \left\{re^{-i\theta}:0
\leq \theta\leq  \frac{\pi}{2\alpha}\right\}^*,
$$
where the symbol $\vee$ means "followed by" and $*$ means that the curve is taken in the opposite direction. Applying Cauchy Theorem to function $f(z)=z^\gamma\exp(-izx-z^\alpha)$ on the curve C, as well as letting $r$ tend to zero and $R$ tend to infinity, we obtain that
%as in the Lemma 1 (vii), \cite{debbi2006explicit}(), 
$$
\int_{0}^{+\infty} \xi^{\gamma}\exp\left(-i\xi x-\xi^{\alpha}\right)\D \xi=\int_{0}^{+\infty} e^{-i\frac{(\gamma+1)\pi}{2\alpha}}\xi^{\gamma}\exp\left(-\xi x e^{i\frac{(\alpha-1)\pi}{2\alpha}}-\xi^{\alpha}e^{-i\frac{\pi}{2}}\right)\D \xi.
$$
 Making the change of variable $\lambda=\xi x$, we find
\begin{align*}
        (-\Delta)^{\frac{\gamma}{2}}G(x)=& \frac{1}{\pi}\mathrm{Re}\left\{\int_{0}^{+\infty} e^{-i\frac{(\gamma+1)\pi}{2\alpha}}\xi^{\gamma}\exp\left(-\xi x e^{i\frac{(\alpha-1)\pi}{2\alpha}}-\xi^{\alpha}e^{-i\frac{\pi}{2}}\right)\D \xi\right\}\\
        = &\frac{1}{\pi x^{1+\gamma}}\mathrm{Re}\left\{\int_{0}^{+\infty} e^{-i\frac{(\gamma+1)\pi}{2\alpha}}\lambda^{\gamma}\exp\left(-\lambda e^{i\frac{(\alpha-1)\pi}{2\alpha}}-x^{-\alpha}\lambda^{\alpha}e^{-i\frac{\pi}{2}}\right)\D \lambda\right\}\\
        \leq &\frac{1}{\pi x^{1+\gamma}}\int_{0}^{+\infty}\left| e^{-i\frac{(\gamma+1)\pi}{2\alpha}}\lambda^{\gamma}\exp\left(-\lambda e^{i\frac{(\alpha-1)\pi}{2\alpha}}-x^{-\alpha}\lambda^{\alpha}e^{-i\frac{\pi}{2}}\right)\right|\D \lambda\\
        = & \frac{1}{\pi x^{1+\gamma}}\int_{0}^{+\infty} \lambda^{\gamma} e^{ -\lambda \cos \frac{(\alpha-1)\pi}{2\alpha} }\D \lambda=\frac{C}{x^{1+\gamma}}.
    \end{align*}
On the other hand, \begin{align*}
     \left|(-\Delta)^{\frac{\gamma}{2}}G(x)\right| \leq \frac{1}{\pi}\int_{0}^{+\infty} \left|\xi^{\gamma}\exp\left(-i\xi x-\xi^{\alpha}\right)\right|\D \xi\leq\frac{1}{\pi}\int_{0}^{+\infty} \xi^{\gamma}\exp\left(-\xi^{\alpha}\right)\D \xi=C.
\end{align*}
Therefore 
$$
  \left|(-\Delta)^{\frac{\gamma}{2}}G(x)\right|\leq \frac{C}{|x|^{1+\gamma}}\wedge C ,
$$
and we complete our proof.
\end{proof}
\begin{remark}
    In \cite{debbi2005solutions}, authors actually prove the the inequality~\eqref{G bound} holds for all $\alpha\in(1,+\infty)\backslash \mathbb{N}$. Here we give the the upper bound of $(\Delta)^{\gamma}G$. Although the order of estimates~\eqref{delta G bound} may not be optimal, it is enough in the proof of our article.
\end{remark}

Similar to the heat kernel of classical Laplacian, we can check that $G_{t,s}(x)$ satisfies the following semigroup property. For more details, readers can refer to Lemma 1 in \cite{debbi2005solutions}. 

\begin{lemma}\label{semi-group}
    \begin{enumerate}[(i)]
        \item \label{int is 1}For all $0\leq s<t\leq T$, $x\in {\R}$, we have
       $ G_{t,s}(x)>0 $ and 
            $$\int_{\mathbb{R}}G_{t,s}(x)\D x=1.$$
        \item Denote $ R_s^t f(x) =\int_{\mathbb{R}} G_{t,s}(x-y)f(y)\D y$ with $0\leq s<t\leq T$. Then it holds that
        \begin{align*}
            R_{t_2}^{t_3}\left(R_{t_1}^{t_2} f\right)=R_{t_1}^{t_3}f,
        \end{align*}
        where $0\leq t_1<t_2<t_3\leq T$.
        \item \label{variable change}If $a(r)>0$, $r\in[0,T]$, define 
    \begin{align}\label{defA}
        A_{t,s}=\int_s^ta(r)\D r>0,
    \end{align}
 then for all $0\leq s<t\leq T$, it holds that
$$
D^k G_{t,s}(x)= A_{t,s}^{-\frac{1+k}{\alpha}} D^kG(z)\Big|_{z=A_{t,s}^{-\frac{1}{\alpha}}x}, \quad\quad \forall\,k\in \mathbb{N},
$$
and
$$
(-\Delta)^{\frac{\gamma}{2}}G_{t,s}(x)= A_{t,s}^{-\frac{1+\gamma}{\alpha}}(-\Delta)^{\frac{\gamma}{2}}G(z)\Big|_{z=A_{t,s}^{-\frac{1}{\alpha}}x}, \quad\quad \forall\, \gamma>0.
$$
\end{enumerate}
\end{lemma}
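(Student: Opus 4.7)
The plan is to derive all three statements from the explicit Fourier representation~\eqref{eq 166}, together with the positivity $A_{t,s}>0$ guaranteed by $a(\cdot)\geq m>0$. For part~(i), Fourier inversion evaluated at $\xi=0$ yields
\begin{equation*}
    \int_{\R} G_{t,s}(x)\,\D x = \exp\bigl(-|0|^\alpha A_{t,s}\bigr) = 1.
\end{equation*}
Strict positivity $G_{t,s}(x)>0$ follows either by recognising $\xi\mapsto\exp(-|\xi|^\alpha A_{t,s})$ as the characteristic function of a symmetric $\alpha$-stable law (whose density is a classical strictly positive smooth function), or, for a self-contained argument when $\alpha\in(1,2)$, by Bochner subordination, writing $G_{t,s}$ as a positive mixture of Gaussian heat kernels weighted by the density of an $\alpha/2$-stable subordinator.

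For part~(ii), the plan is to use the convolution form of $R_s^t$ together with the Fourier convolution theorem. Since $R_{t_2}^{t_3}(R_{t_1}^{t_2}f) = G_{t_3,t_2}\ast G_{t_2,t_1}\ast f$, the semigroup identity reduces to the purely multiplicative statement
\begin{equation*}
    e^{-|\xi|^\alpha A_{t_3,t_2}}\cdot e^{-|\xi|^\alpha A_{t_2,t_1}} = e^{-|\xi|^\alpha A_{t_3,t_1}},
\end{equation*}
which is immediate from the additivity $A_{t_3,t_2}+A_{t_2,t_1}=A_{t_3,t_1}$ of the time integral. Inverting the Fourier transform yields $G_{t_3,t_2}\ast G_{t_2,t_1}=G_{t_3,t_1}$, whence the claim.

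For part~(iii), the key ingredient is the single change of variables $\eta = A_{t,s}^{1/\alpha}\xi$ in the defining integral of $G_{t,s}$, which produces the basic scaling identity
\begin{equation*}
    G_{t,s}(x) = A_{t,s}^{-1/\alpha}\, G\bigl(A_{t,s}^{-1/\alpha} x\bigr).
\end{equation*}
Differentiating $k$ times in $x$ gives the claimed formula for $D^k G_{t,s}$; exchanging derivative and integral is legitimate because $|\xi|^k\exp(-|\xi|^\alpha A_{t,s})$ is integrable for every $k\in\mathbb{N}$. The fractional Laplacian identity can be obtained in two equivalent ways: either insert the symbol $|\xi|^\gamma$ into the Fourier representation and repeat the same substitution (using the integrability of $|\xi|^\gamma e^{-|\xi|^\alpha A_{t,s}}$), or invoke the homogeneity of $(-\Delta)^{\gamma/2}$ under dilations, namely $(-\Delta)^{\gamma/2}[u(\lambda\,\cdot)](x) = \lambda^\gamma [(-\Delta)^{\gamma/2}u](\lambda x)$, applied with $\lambda = A_{t,s}^{-1/\alpha}$ to the scaling identity above.

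I do not expect any serious obstacle here: the entire lemma is a reorganisation of well-known properties of the symmetric $\alpha$-stable heat kernel, and the only care needed is to justify differentiation—classical or fractional—under the integral sign, which is ensured by the Schwartz-type decay in $\xi$ of the symbol $\exp(-|\xi|^\alpha A_{t,s})$ together with $A_{t,s}\geq m(t-s)>0$.
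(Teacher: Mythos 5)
Your proposal is correct. The paper does not prove this lemma itself but defers to Lemma~1 of Debbi--Dozzi \cite{debbi2005solutions} (and to \cite{Blumenthal1990} for positivity), and your Fourier-based argument --- inversion at $\xi=0$ for the unit mass, the convolution theorem plus additivity of $A_{t,s}$ for the semigroup property, and the substitution $\eta=A_{t,s}^{1/\alpha}\xi$ for the scaling identities --- is exactly the standard route those references take; the only implicit ingredient is that $G_{t,s}\in L^1(\R)$ so that $\int_{\R}G_{t,s}=\mathcal{F}(G_{t,s})(0)$ is legitimate, which is supplied by the decay estimate of Lemma~\ref{Lemma G bound}.
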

\begin{remark}
    For $a(\cdot)\equiv 1$ and $\alpha\in (1,2]$, $P(t,s;x,y):=G_{t,s}(y-x)$ is the transition density of the $\alpha$-stable process. Some properties such as the positivity of G are studied in \cite{Blumenthal1990}. In general, the fractional heat kernel may not be positive when $\alpha>2$. 
\end{remark}

\begin{remark}By Lemma \ref{semi-group}~\eqref{int is 1}, we get
    \begin{align}
  \label{eq 174}   D^k\int_{\R}G_{t,s}(x-y)\D y&=\int_{\R}D^k G_{t,s}(x-y)\D y=0, \quad\quad \forall\, k\geq 1,
\end{align}
  and
  \begin{align}
  \label{eq 175}  (-\Delta)^{\frac{\gamma}{2}}\int_{\R}G_{t,s}(x-y)\D y&=\int_{\R}(-\Delta)^{\frac{\gamma}{2}}G_{t,s}(x-y)\D y=0,\quad\quad  \forall\, \gamma\geq 1.
    \end{align}
\end{remark}

With the help of Lemma \ref{G bound}, we have the upper bound estimates for $G_{t,x}(x)$ as follows.
\begin{lemma}\label{all G estimate}
Let $M>0$ be the constant such that $a(\cdot) \in\left[\frac{1}{M},M\right]$. For $0\leq s<t\leq T$, we have the following estimates.
\begin{enumerate}[(i)]
\item \label{G_t bound}
For each $x\in\R$ and $k\in \mathbb{N}$, there exists $C=C(\alpha,k)$ such that
\begin{align*}
    \left|D^k G_{t,s}(x)\right|\leq \frac{C(t-s)^{-\frac{1+k}{\alpha}}M^{\frac{1+k}{\alpha}}}{M^{-\frac{k+\alpha+1}{\alpha}}(t-s)^{-\frac{k+\alpha+1}{\alpha}}|x|^{k+\alpha+1}+1}.
\end{align*}
    \item  \label{sup G estimate} 
    For each $\gamma\in[0,\alpha)$, there is $C=C(\alpha,\gamma,M,T)$ such that
            \begin{align*}
       \int_{\R} \sup_{t,s\in[0,T]}G_{t,s}(x)|x|^{\gamma}\D x\leq C.
        \end{align*}
        
        \item  \label{sup G_2 estimate}
        For each $k\geq 2$, $\gamma\geq0$ and $\eta>0$, there exists $C=C(k,M,\gamma,T)$ such that
    \begin{align*}
        \left\{\begin{aligned}
       \int_{|x|\leq \eta}\sup_{t\in[0,T]} \int_t^T\left|D^k G_{s,t}(x)\right||x|^{\gamma}\D s\D x\leq C\eta^{\alpha+\gamma-k},\quad k<\alpha+\gamma,\\
       \int_{|x|> \eta}\sup_{t\in[0,T]} \int_t^T\left|D^k G_{s,t}(x)\right||x|^{\gamma}\D s\D x\leq C\eta^{\alpha+\gamma-k} ,\quad k\geq \alpha+\gamma.
        \end{aligned}\right.
    \end{align*}

    \item\label{delta G sup int bound}    For each $\gamma\in[0,\alpha)$, there is $C=C(\alpha,\gamma,M)>0$ such that
   \begin{align*}  \int_{\R}\sup_{r\in[t,T]}\int_{r_t(\varepsilon)}^r\left|(-\Delta)^{\frac{\alpha}{2}} G_{r,u}(x)\right||x|^{\gamma}\D u\D x\leq C \varepsilon^{\frac{\gamma}{\alpha}}.
    \end{align*}
    
    \item \label{Lemma G 195}  For each $\gamma\in[0,\alpha)$, there exists $C=C(\alpha,\gamma,M)>0$ such that  \begin{align*}
        \int_{\R}\big|(-\Delta)^{\frac{\alpha}{2}} G_{t,s}(x)\big||x|^{\gamma}\D x\leq C(t-s)^{\frac{\gamma}{\alpha}-1}.
        \end{align*}

         \item \label{Lemma G 200}  For each $k\in\mathbb{N}$ and $ \gamma \in [0,\alpha+k)$, there is $C=C(\alpha,\gamma,M,k)>0$ such that  \begin{align*}
        \int_{\R}\big|D^k G_{t,s}(x)\big||x|^{\gamma}\D x\leq C(t-s)^{\frac{\gamma-k}{\alpha}}.
        \end{align*}

       \item \label{Lemma G 219}For each $l>1$ and $k\in\mathbb{N}_+$, there exists $C=C(\alpha,l,T,\gamma,M,k)>0$ such that
       \begin{align*}
       \left\{\begin{aligned}
          \int_0^T \Big(\int_{|x|>\eta}\left| D^k G_{T,t}(x)\right||x|^{\gamma}\D x\Big)^l\D t\leq C \eta^{(\gamma-k)l+\alpha}, \quad k> \gamma+\frac{\alpha}{l},\\[0.3cm]
           \int_0^T \Big(\int_{|x|\leq \eta}\left| D^k G_{T,t}(x)\right||x|^{\gamma}\D x\Big)^l\D t\leq C \eta^{(\gamma-k)l+\alpha}, \quad k< \gamma+\frac{\alpha}{l}.
             \end{aligned}\right.
        \end{align*}
    \end{enumerate}
\end{lemma}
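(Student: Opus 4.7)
The plan is to reduce every claim in the lemma to the pointwise bounds of Lemma \ref{Lemma G bound} combined with the scaling identities in Lemma \ref{semi-group}. Writing $\tau := t-s$ and $A := A_{t,s}$, so that $A \in [\tau/M,\, M\tau]$, the scaling gives $D^k G_{t,s}(x) = A^{-(1+k)/\alpha} D^k G(A^{-1/\alpha}x)$ and $(-\Delta)^{\gamma/2} G_{t,s}(x) = A^{-(1+\gamma)/\alpha}(-\Delta)^{\gamma/2} G(A^{-1/\alpha}x)$. Feeding these into Lemma \ref{Lemma G bound} immediately yields the master bounds
\begin{align*}
|D^k G_{t,s}(x)| &\leq C\min\!\bigl(\tau^{-(1+k)/\alpha},\ \tau\,|x|^{-(1+k+\alpha)}\bigr),\\
|(-\Delta)^{\gamma/2} G_{t,s}(x)| &\leq C\min\!\bigl(\tau^{-(1+\gamma)/\alpha},\ \tau\,|x|^{-(1+\gamma)}\bigr),
\end{align*}
with the two branches crossing at $|x| \asymp \tau^{1/\alpha}$. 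Part (i) is then an immediate unpacking of the constants in terms of $M$ directly from the scaled bound.

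For parts (v) and (vi), and the underlying estimate for (iv), I would change variables $y = A^{-1/\alpha}x$ to obtain the exact identity
\[
\int_\R |D^k G_{t,s}(x)|\,|x|^\gamma\,\D x \;=\; A^{(\gamma-k)/\alpha}\int_\R |D^k G(y)|\,|y|^\gamma\,\D y,
\]
and analogously for $(-\Delta)^{\gamma/2} G_{t,s}$; the remaining $y$-integral is finite by Lemma \ref{Lemma G bound} precisely under the stated range on $\gamma$ (namely $\gamma < \alpha + k$ in (vi) and $\gamma < \alpha$ in (v)), and the $A^{(\gamma-k)/\alpha}$ factor converts into the desired $(t-s)$-power using $A \asymp \tau$. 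Part (ii) is slightly different: fix $x$ and optimize the master bound over $A \in (0, MT]$, which after a short calculus check yields $\sup_{t,s} G_{t,s}(x) \leq C\min(|x|^{-1},\, T|x|^{-1-\alpha})$; multiplying by $|x|^\gamma$ and integrating then converges exactly when $\gamma \in (0,\alpha)$. For (iv), after bounding the kernel I split the $u$-range at $u = r-\varepsilon^{\alpha/\alpha}$-type threshold and use the same change of variable.

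The delicate parts are (iii) and (vii), both of which require splitting each integral at the crossover $|x| \asymp \tau^{1/\alpha}$. For (iii), I would first fix $x$ and compute $\sup_{t\in[0,T]}\int_t^T |D^k G_{s,t}(x)|\,\D s$ by integrating each branch of the master bound on its natural range (cutting the $\tau$-integral at $\tau = |x|^\alpha$); since $k\geq 2 > \alpha-1$ for $\alpha\in(1,2]$, both pieces scale like $|x|^{\alpha-1-k}$, giving the uniform bound $C|x|^{\alpha-1-k}$. Integrating $|x|^{\alpha-1-k+\gamma}$ over $|x|\leq\eta$ converges precisely when $k<\alpha+\gamma$ and produces $\eta^{\alpha+\gamma-k}$, while the symmetric integral over $|x|>\eta$ converges when $k>\alpha+\gamma$ and yields the same power; part (iv) is treated analogously with $(-\Delta)^{\alpha/2}G_{r,u}$ in place of $D^k G_{s,t}$. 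For (vii) the same splitting yields
\[
\int_{|x|>\eta} |D^k G_{T,t}(x)|\,|x|^\gamma\,\D x \;\leq\; C\min\!\bigl(\tau\,\eta^{\gamma-k-\alpha},\ \tau^{(\gamma-k)/\alpha}\bigr),
\]
with the symmetric statement for $|x|\leq\eta$. Raising to the $l$-th power and integrating $\tau\in[0,T]$, split at $\tau=\eta^\alpha$, converges exactly when $k > \gamma+\alpha/l$ (resp.\ $k<\gamma+\alpha/l$) and delivers the claimed $\eta^{(\gamma-k)l+\alpha}$.

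The main obstacle is purely organizational: one has to be careful to use the correct branch of the master bound on each piece of the domain, and to verify that the two branches produce matching powers at the crossover $|x|\asymp\tau^{1/\alpha}$ and at the time threshold $\tau = \eta^\alpha$. Once that bookkeeping is in place, every estimate in the lemma reduces to an elementary one-dimensional power integration, so no further probabilistic or harmonic-analytic input beyond Lemma \ref{Lemma G bound} and Lemma \ref{semi-group} is required.
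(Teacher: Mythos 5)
Your proposal is correct and follows essentially the same route as the paper's proof: both reduce every item to the pointwise decay bound $|D^kG(x)|\le C(1+|x|^{1+\alpha+k})^{-1}$ (and its fractional-Laplacian analogue) combined with the scaling $D^kG_{t,s}(x)=A_{t,s}^{-(1+k)/\alpha}D^kG(A_{t,s}^{-1/\alpha}x)$, followed by elementary power counting with the splits at $|x|\asymp\tau^{1/\alpha}$ and $\tau=\eta^{\alpha}$. Your restriction to $\gamma\in(0,\alpha)$ in part (ii) is in fact the honest one — at $\gamma=0$ the quantity $\sup_{t,s}G_{t,s}(x)\asymp|x|^{-1}$ fails to be integrable near the origin, a boundary case the paper's own proof glosses over in the same way.
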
  

\begin{proof}[Proof of Lemma \ref{all G estimate}~\eqref{G_t bound}]
    The proof is completed by Lemma \ref{G bound} and Lemma \ref{semi-group}~\eqref{variable change}.
\end{proof}
\begin{proof}[Proof of Lemma \ref{all G estimate}~\eqref{sup G estimate}]
By  Lemma \ref{all G estimate}~\eqref{G_t bound},
  \begin{align*}
     \sup_{0\leq s<t\leq T}G_{t,s}(x) |x|^{\gamma} &\leq \frac{C|x|^{\gamma}}{1 + |x|^{\alpha+1}} \leq \begin{cases}
          C|x|^{\gamma-1},\quad\quad\quad|x|\leq 1,\\
        C|x|^{\gamma-\alpha-1},\quad\quad|x|>1.
     \end{cases}
  \end{align*}
 Then the proof is completed by integrating on $x$. 
\end{proof}

\begin{proof}[Proof of Lemma \ref{all G estimate}~\eqref{sup G_2 estimate}]
In view of Lemma \ref{all G estimate}~\eqref{G_t bound},
    \begin{align*}
        \sup_{t\in[0,T]}\int_t^T\left|D^kG_{s,t}(x)\right| |x|^\gamma \D s \leq& C \sup_{t\in[0,T]}\int_t^T\left|\frac{M^{\frac{1+k}{\alpha}}}{(s-t)^{\frac{1+k}{\alpha}}+M^{-\frac{k+\alpha+1}{\alpha}}(s-t)^{-1}|x|^{k+\alpha+1}}\right|\D s \cdot |x|^\gamma \\
        \leq & C \int_0^T \frac{M^{\frac{1+k}{\alpha}} v }{v^{\frac{\alpha+1+k}{\alpha}}+M^{-\frac{k+\alpha+1}{\alpha}}|x|^{k+\alpha+1}} \D v \cdot |x|^\gamma \\
        \xlongequal[]{v = \frac{|x|^\alpha}{r^\alpha}} & C \int^{+\infty}_{|x|/T^\frac{1}{\alpha}} \frac{M^{\frac{1+k}{\alpha}} |x|^\alpha/r^\alpha}{|x|^{k+\alpha+1}\left( r^{-k-\alpha-1} + M^{-\frac{k+\alpha+1}{\alpha}} \right)} \frac{\alpha|x|^\alpha}{r^{\alpha+1}} \D r \cdot |x|^\gamma \\
        \leq & C \int^{+\infty}_0 \alpha M^{\frac{1+k}{\alpha}}  \frac{ |x|^{\alpha-k-1} r^{k-\alpha} }{1 + M^{-\frac{k+\alpha+1}{\alpha}} r^{\alpha+k+1}} \D r \cdot |x|^\gamma 
        \leq C|x|^{\alpha+\gamma-k-1}.
    \end{align*}
     Then the proof is completed by integrating on $x$. 
\end{proof}

\begin{proof}[Proof of Lemma \ref{all G estimate}~\eqref{delta G sup int bound}]

By Lemma \ref{semi-group} and Lemma~\ref{Lemma G bound}, we have
\begin{align*}
    &\int_{r_t(\varepsilon)}^r\left|(-\Delta)^{\frac{\alpha}{2}} G_{r,u}(x)\right|\D u= \int_{r_t(\varepsilon)}^r A_{r,u}^{-\frac{\alpha+1}{\alpha}}\left|(-\Delta)^{\frac{\alpha}{2}} G(x\cdot A_{r,u}^{-\frac{1}{\alpha}})\right|\D u\\
   \leq& \int_{r_t(\varepsilon)}^r \frac{C}{A_{r,u}^{\frac{\alpha+1}{\alpha}}+|x|^{\alpha+1}}\D u \leq \int_{r_t(\varepsilon)}^r \frac{C}{M^{-\frac{\alpha+1}{\alpha}}{{(r-u)}^{\frac{\alpha+1}{\alpha}}+|x|^{\alpha+1}}}\D u.
\end{align*}
It yields that
\begin{align*}
     &\int_{\R}\sup_{r\in[t,T]}\int_{r_t(\varepsilon)}^r\left|(-\Delta)^{\frac{\alpha}{2}} G_{r,u}(x)\right||x|^{\gamma}\D u\D x\\
     \leq &\int_0^\varepsilon \int_{\R} \frac{C|x|^{\gamma}}{M^{-\frac{\alpha+1}{\alpha}}s^{\frac{\alpha+1}{\alpha}}+|x|^{\alpha+1}}\D x\D s\leq \int_0^\varepsilon C s^{\frac{\gamma-\alpha}{\alpha}}\D s= C \varepsilon^{\frac{\gamma}{\alpha}}.
\end{align*}
\end{proof}

\begin{proof}[Proof of Lemma \ref{all G estimate}~\eqref{Lemma G 195} and~\eqref{Lemma G 200}]
In view of Lemma \ref{G bound} and Lemma \ref{semi-group}, we get that
\begin{align*}
    \int_{\R} \left|(-\Delta)^{\frac{\alpha}{2}}G_{t,s}(x)\right||x|^{\gamma}\D x & =A_{t,s}^{\frac{\gamma}{\alpha}-1}\int_{\R} \left|(-\Delta)^{\frac{\alpha}{2}}G(z)\right||z|^{\gamma}\D z \leq  C (t-s)^{\frac{\gamma}{\alpha}-1}\int_{\R} \frac{|x|^{\gamma}}{1+|x|^{1+\alpha}}\D x,
\end{align*}
and
\begin{align*}
    \int_{\R} \left|D^k G_{t,s}(x)\right||x|^{\gamma}\D x & =A_{t,s}^{\frac{\gamma-k}{\alpha}}\int_{\R} \left|D^k G(z)\right||z|^{\gamma}\D z \leq  C (t-s)^{\frac{\gamma-k}{\alpha}}\int_{\R} \frac{|x|^{\gamma}}{1+|x|^{1+\alpha+k}}\D x.
\end{align*}
\end{proof}

\begin{proof}[Proof of Lemma \ref{all G estimate}~\eqref{Lemma G 219}]
First, we prove the case of $\gamma<k-\frac{\alpha}{l}$. By Lemma \ref{all G estimate}~\eqref{G_t bound}, we have
    \begin{align*}
         & \int_0^T \Big(\int_{|x|>\eta}\left| D^k G_{T,t}(x)\right||x|^{\gamma}\D x\Big)^l\D t\\
    \leq & C\int_0^T \Big(\int_{|x|>\eta} \frac{(T-t) M^{\frac{k+1}{\alpha}} |x|^\gamma }{(T-t)^{\frac{k+1+\alpha}{\alpha}}+M^{-\frac{k+1+\alpha}{\alpha}}|x|^{k+1+\alpha}}\D x\Big)^l\D t\\
     =& C' \int_0^T \Big(\int_{|x|>\eta} \frac{t|x|^{\gamma}}{(Mt)^{\frac{k+1+\alpha}{\alpha}}+|x|^{k+1+\alpha}}\D x\Big)^l\D t\\
     =& C'' \int_0^T \Big(\int_{ |y|> \eta {(Mt)^{-\frac{1}{\alpha}}}} \frac{|y|^{\gamma}}{1+|y|^{k+1+\alpha}}\D y\Big)^lt^{\frac{(\gamma-k)l}{\alpha}}\D t.
\end{align*}
If $\eta^{\alpha}<MT$, it holds that
\begin{align*}
&\int_0^T \Big(\int_{|y|>{\eta(Mt)^{-\frac{1}{\alpha}}}} \frac{|y|^{\gamma}}{1+|y|^{k+1+\alpha}}\D x\Big)^lt^{\frac{(\gamma-k)l}{\alpha}}\D t\\
\leq & \int_0^{M^{-1}\eta^{\alpha}} \Big(\int_{|y|> \eta (Mt)^{-\frac{1}{\alpha}} }|y|^{\gamma-k-1-\alpha}\D y\Big)^l t^{\frac{(\gamma-k)l}{\alpha}} \D t+\int_{M^{-1}\eta^{\alpha}}^T \Big(\int_{|y|> 1}|y|^{\gamma-k-1-\alpha}\D y\Big)^l t^{\frac{(\gamma-k)l}{\alpha}} \D t\\
&+\int_{M^{-1}\eta^{\alpha}}^T \Big(\int_{(Mt)^{-\frac{1}{\alpha}}\eta<|y|\leq 1}|y|^{\gamma}\D y\Big)^l t^{\frac{(\gamma-k)l}{\alpha}} \D t \leq C\eta^{\alpha+(\gamma-k)l}.
\end{align*} 
If $\eta^{\alpha}\geq MT$, it holds that
\begin{align*}
&\int_0^T \Big(\int_{|y|>{\eta(Mt)^{-\frac{1}{\alpha}}}} \frac{|y|^{\gamma}}{1+|y|^{k+1+\alpha}}\D x\Big)^lt^{\frac{(\gamma-k)l}{\alpha}}\D t\\
\leq & \int_0^{M^{-1}\eta^{\alpha}} \Big(\int_{|y|> 1 }|y|^{\gamma-k-1-\alpha}\D y\Big)^l t^{\frac{(\gamma-k)l}{\alpha}} \D t \leq C \eta^{\alpha+(\gamma-k)l}.
\end{align*}

The proof of the case of $\gamma>k-\frac{\alpha}{l}$ can be completed in a similar way. Notice that
    \begin{align*}
          \int_0^T \Big(\int_{|x| \leq \eta}\left| D^k G_{T,t}(x)\right||x|^{\gamma}\D x\Big)^l\D t
     \leq  C \int_0^T \Big(\int_{ |y| \leq \eta {(Mt)^{-\frac{1}{\alpha}}}} \frac{|y|^{\gamma}}{1+|y|^{k+1+\alpha}}\D y\Big)^lt^{\frac{(\gamma-k)l}{\alpha}}\D t.
\end{align*}
If $\eta^{\alpha}<MT$, it holds that
\begin{align*}
&\int_0^T \Big(\int_{|y| \leq {\eta(Mt)^{-\frac{1}{\alpha}}}} \frac{|y|^{\gamma}}{1+|y|^{k+1+\alpha}}\D x\Big)^lt^{\frac{(\gamma-k)l}{\alpha}}\D t\\
    \leq & \int_0^{M^{-1}\eta^{\alpha}} \Big(\int_{1<|y| \leq \eta (Mt)^{-\frac{1}{\alpha}} }|y|^{\gamma-k-1-\alpha}\D y\Big)^l t^{\frac{(\gamma-k)l}{\alpha}} \D t+\int_0^{M^{-1}\eta^{\alpha}} \Big(\int_{|y|\leq  1}|y|^{\gamma}\D y\Big)^l t^{\frac{(\gamma-k)l}{\alpha}} \D t\\
&+\int_{M^{-1}\eta^{\alpha}}^T \Big(\int_{|y| \leq (Mt)^{-\frac{1}{\alpha}}\eta}|y|^{\gamma}\D y\Big)^l t^{\frac{(\gamma-k)l}{\alpha}} \D t \leq C\eta^{\alpha+(\gamma-k)l}.
\end{align*} 
If $\eta^{\alpha}\geq MT$, it holds that
\begin{align*}
&\int_0^T \Big(\int_{|y| \leq {\eta(Mt)^{-\frac{1}{\alpha}}}} \frac{|y|^{\gamma}}{1+|y|^{k+1+\alpha}}\D x\Big)^lt^{\frac{(\gamma-k)l}{\alpha}}\D t\\
\leq & \int_0^{M^{-1}\eta^\alpha} \Big(\int_{|y|\leq  1 }|y|^{\gamma}\D y\Big)^l t^{\frac{(\gamma-k)l}{\alpha}} \D t+\int_0^{M^{-1}\eta^\alpha} \Big(\int_{1<|y|\leq \eta(Mt)^{-\frac{1}{\alpha}} } |y|^{\gamma-k-1-\alpha}\D y\Big)^l t^{\frac{(\gamma-k)l}{\alpha}} \D t \\
\leq & C\eta^{\alpha+(\gamma-k)l}.
\end{align*}
\end{proof}

\subsection{Proof of Lemma \ref{uniqueness}}
\begin{proof} 
First, we show that
\begin{align}\label{eq 817}
    u(t,x)=R^T_t g(x)+\int_t^T R_t^s f(s)(x)\D s-\int_t^T R_t^s v(s)(x)\D \widetilde{W}_s.
\end{align}
    For fixed $t\in [0,T)$, applying Ito's formula to $G_{s,t}(x-y)u(s,y)$ with $s\in (t,T]$, it holds that
    \begin{align*}
       \D G_{s,t}(x-y)u(s,y)=& -a(s)(-\Delta)^{\frac{\alpha}{2}}G_{s,t}(x-y)u(s,y)\D s+G_{s,t}(x-y)v(s,y)\D W_s\\
       &+G_{s,t}(x-y)\left[a(s)(-\Delta)^{\frac{\alpha}{2}}u(s,y)-f(s,y)-\sigma(s) v(s,y)\right]\D s.
    \end{align*}
    For each $r\in (t,T]$, integrating with $s$ from $r$ to $T$ and with $y$ on $\R$, by~\eqref{eq202407101} we get 
    \begin{align*}
        R^r_t u(r)(x)
    = R_t^T g(x)+\int_r^T \left[ R_t^s f(s)(x)+\sigma (s) R_t^s v(s)(x) \right] \D s-\int_r^T R_t^s v(s)(x)\D W_s.
    \end{align*}
    To show~\eqref{eq 817}, it suffices to prove when $r$ tends to $t$, it holds that $\mathbb{P}$-a.s.,
    \begin{align}
   \label{lim rt1'}    & \lim_{r\rightarrow t} R^r_t u(r)(x)= u(t,x),\\
   \label{lim rt2'}       & \lim_{r\rightarrow t} \int_t^r \left[ R_t^s f(s)(x)+\sigma (s) R_t^s v(s)(x) \right] \D s=0, \\
    \label{lim rt3'}     & \lim_{r\rightarrow t} \int_t^r R_t^s v(s)(x)\D W_s =0.
    \end{align}

In order to show the equation (\ref{lim rt1'}), we notice that
\begin{align}
  &\E\Big[ \big|\int_{\mathbb{R}} G_{r,t}(x-y)u(r,y)\D y -u(t,x)\big|^2\Big]=  \E\Big[ \big|\int_{\mathbb{R}}G_{r,t}(x-y)(u(r,y)-u(t,x))\D y \big|^2\Big]\nonumber\\
 \leq& \int_{\mathbb{R}} G_{r,t}(x-y) \E\left[\big|u(r,y)-u(t,x)\big|^2\right]\D y  = \int_{\mathbb{R}} G(z) \E\left[\big|u(r,x-A_{r,t}^{\frac{1}{\alpha}}z)-u(t,x)\big|^2\right]\D z,
\end{align}
where the first inequality uses Cauchy inequality and the second equality follows from Lemma \ref{semi-group}. In view of Lemma~\ref{Baisc assumption}, we have $u\in C^{\beta}\left(\mathbb{R},\mathcal{S}^2_\mathbb{F}(0, T)\right)$. By Lebesgue's dominated convergence theorem, we obtain
\begin{align*}
    \lim_{r\rightarrow t}\int_{\mathbb{R}} G(z) \E\left[\big|u(r,x-A_{r,t}^{\frac{1}{\alpha}}z)-u(t,x)\big|^2\right]\D z=0,
\end{align*}
which proves (\ref{lim rt1'}). 

Next we prove (\ref{lim rt2'}) and (\ref{lim rt3'}). We only prove~\eqref{lim rt3'} and the proof of~\eqref{lim rt2'} is similar. By BDG inequality, it follows
\begin{align*}
     \E\Big[ \big|\int_t^r R_t^s v(s)(x)\D W_s\big|^2\Big] \leq  C \int_t^r \E\Big[ \big| \int_{\mathbb{R}}G_{s,t}(x-y) v(s,y)\D y\big|^2\Big]\D s.
\end{align*}
Similarly, we have
\begin{align*}
    \int_t^r \E\Big[ \big| \int_{\mathbb{R}}G_{s,t}(x-y) v(s,y)\D y\big|^2\Big]\D s&\leq    \int_{\mathbb{R}}G(z)  \int_t^r\E\Big[\big| v(s,x-A_{r,t}^{\frac{1}{\alpha}}z)\big|^2\Big]\D s\D z.
\end{align*}
In view Lemma~\ref{Baisc assumption}, we have $v\in C^{\beta}\left(\mathbb{R},\mathcal{L}^2_\mathbb{F}(0, T)\right)$. Then, we complete the proof of (\ref{lim rt3'}) by Lebesgue's dominated convergence theorem again. Thus we prove~\eqref{eq 817}.

By~\eqref{eq 817} and~\eqref{eq 87}, it holds that
    \begin{align}
   \nonumber     &u(t,x)+\int_t^T R_t^s v(s)(x)\D \widetilde{W}_s=R^T_t g(x)+\int_t^T R_t^s f(s)(x)\D s\\
     =& R^T_t p(t;x)+  R^T_t\Big(\int_t^T q(s)(x)\mathrm{d}\widetilde{W}_s\Big)+ \int_t^T R_t^s Y(t;s)(x)\D s+\int_t^TR_t^s\int_t^{s} Z(r;s)(x)\D \widetilde{W}_r\D s \label{eq 167}.
    \end{align}
By Lemma \ref{semi-group}, it holds that
\begin{align*}
   R^T_t\Big(\int_t^T q(s)(x)\mathrm{d}\widetilde{W}_s\Big)=\int_t^T R_t^s \big(R_s^T q(s)\big)(x)\D \widetilde{W}_s.
\end{align*}
In view of the stochastic Fubini theorem and Lemma \ref{semi-group}, we have
\begin{align*}
   \int_t^T R_t^s \int_t^s Z(r;s)(x)\D \widetilde{W}_r\D s &= \int_t^T \int_s^T R_t^r Z(s;r)(x)\D r\D\widetilde{W}_s=\int_t^T R_t^s \int_s^T R_s^r Z(s;r)(x)\D r\D\widetilde{W}_s.
\end{align*}
 Taking the conditional expectation on both sides of (\ref{eq 167}), we get 
 $$
 u(t,x)= R^T_t p(t)(x)+ \int_t^T R_t^s Y(t;s)(x)\D s.
 $$
 It follows that
 $$
 \int_t^T R_t^s \Big( v(s) - R_s^T q(s) - \int_s^T R_s^r Z(s;r)\D r \Big)(x)\D \widetilde{W}_s = 0.
 $$
Because of the arbitrariness of $(t,x)$, we complete our proof. 
\end{proof}

\bibliographystyle{abbrv}
\bibliography{ref}
\end{document}